\documentclass[11pt]{amsart}
\usepackage{lmodern}
\usepackage{amsmath, amsthm, amssymb, amsfonts}
\usepackage[normalem]{ulem}
\usepackage{hyperref}

\usepackage{mathrsfs}

\usepackage{verbatim} 
\usepackage{longtable}

\usepackage{mathtools}

\usepackage{tikz}
\usetikzlibrary{decorations.pathmorphing}
\tikzset{snake it/.style={decorate, decoration=snake}}

\usepackage{caption}

\usepackage{tikz-cd}
\usetikzlibrary{arrows}

\theoremstyle{plain}
\newtheorem{thm}{Theorem}[section]
\newtheorem{cor}[thm]{Corollary}
\newtheorem{lem}[thm]{Lemma}
\newtheorem{prop}[thm]{Proposition}
\newtheorem{conj}[thm]{Conjecture}
\newtheorem{question}[thm]{Question}

\theoremstyle{definition}

\newtheorem{example}[thm]{Example}

\theoremstyle{remark}
\newtheorem{rmk}[thm]{Remark}

\newcommand{\BC}{{\mathbb{C}}}

\newcommand{\BG}{{\mathbb{G}}}

\newcommand{\BL}{{\mathbb{L}}}

\newcommand{\BN}{{\mathbb{N}}}

\newcommand{\BQ}{{\mathbb{Q}}}

\newcommand{\BZ}{{\mathbb{Z}}}

\newcommand{\CA}{{\mathcal A}}

\newcommand{\CC}{{\mathcal C}}
\newcommand{\CD}{{\mathcal D}}
\newcommand{\CE}{{\mathcal E}}
\newcommand{\CF}{{\mathcal F}}
\newcommand{\CG}{{\mathcal G}}
\newcommand{\CH}{{\mathcal H}}

\newcommand{\CK}{{\mathcal K}}
\newcommand{\CL}{{\mathcal L}}
\newcommand{\CM}{{\mathcal M}}

\newcommand{\CO}{{\mathcal O}}
\newcommand{\CP}{{\mathcal P}}
\newcommand{\CQ}{{\mathcal Q}}

\newcommand{\CV}{{\mathcal V}}
\newcommand{\CW}{{\mathcal W}}

\DeclareFontFamily{OT1}{rsfs}{}
\DeclareFontShape{OT1}{rsfs}{n}{it}{<-> rsfs10}{}
\DeclareMathAlphabet{\curly}{OT1}{rsfs}{n}{it}


\usepackage{tikz}
\usepackage{lmodern}
\usetikzlibrary{decorations.pathmorphing}

\addtolength{\hoffset}{-1.5cm} \addtolength{\textwidth}{3cm}
\linespread{1.15}

\begin{document}
\title[Fourier--Mukai transforms and the decomposition theorem]{Fourier--Mukai transforms and the decomposition theorem for integrable systems}
\date{\today}

\author[D. Maulik]{Davesh Maulik}
\address{Massachusetts Institute of Technology}
\email{maulik@mit.edu}

\author[J. Shen]{Junliang Shen}
\address{Yale University}
\email{junliang.shen@yale.edu}

\author[Q. Yin]{Qizheng Yin}
\address{Peking University}
\email{qizheng@math.pku.edu.cn}

\begin{abstract}
We study the interplay between the Fourier--Mukai transform and the decomposition theorem for an integrable system $\pi:  M \rightarrow B$.  Our main conjecture is that the Fourier--Mukai transform of sheaves of K\"ahler differentials, after restriction to the formal neighborhood of the zero section, are quantized by the Hodge modules arising in the decomposition theorem for $\pi$. For an integrable system, our formulation unifies the Fourier--Mukai calculation of the structure sheaf by Arinkin--Fedorov, the theorem of the higher direct images by Matsushita, and the ``perverse = Hodge'' identity by the second and the third authors. 

As evidence, we show that these Fourier--Mukai images are Cohen--Macaulay sheaves with middle-dimensional support on the relative Picard space, with support governed by the higher discriminants of the integrable system. We also prove the conjecture for smooth integrable systems and certain 2-dimensional families with nodal singular fibers. Finally, we sketch the proof when cuspidal fibers appear. 




\end{abstract}

\maketitle

\setcounter{tocdepth}{1} 

\tableofcontents
\setcounter{section}{-1}

\section{Introduction}

\subsection{A Fourier--Mukai/Decomposition  correspondence}\label{Sec0.1}
The purpose of this paper is to formulate and explore a correspondence between two geometric structures associated with an integrable system --- the Fourier--Mukai transform \cite{Mukai} and the decomposition theorem \cite{BBD}.

Roughly speaking, a geometric model of an (algebraically completely) integrable system is a Lagrangian fibration $\pi_M: M \to B$ with $M$ a symplectic variety.  Motivated by mirror symmetry and the geometric Langlands correspondence, in many interesting cases the fibration $M \to B$ is conjectured to admit a ``dual'' Lagrangian fibration $\check{M} \to B$ which extends the dual abelian scheme associated with the nonsingular fibers of $\pi_M$. Moreover, the two ambient spaces $M$ and $\check{M}$ are expected to share the same derived category of coherent sheaves via a Fourier--Mukai transform
\begin{equation}\label{FM_conj}
\phi_{\mathrm{FM}}: D\mathrm{Coh}(M) \to D\mathrm{Coh}(\check{M})
\end{equation}
extending the classical Fourier--Mukai transform \cite{Mukai} of dual abelian schemes. We are interested in the Fourier--Mukai images of the locally free sheaves $\Omega^k_M$ of K\"ahler differentials on~$M$:
\begin{equation}\label{FM0}
\phi_{\mathrm{FM}}(\Omega_M^k) \in D^b\mathrm{Coh}(\check{M});
\end{equation}
when $M$ is hyper-K\"ahler, $\Omega_M^k$ are natural \emph{hyper-holomorphic bundles} and (\ref{FM0}) are their homological mirrors.\footnote{See Section \ref{HMS} for further discussions on this aspect.} Our proposal is that the objects (\ref{FM0}) are closely related to the decomposition theorem \cite{BBD} for the original integrable system $\pi_M: M \to B$:
\[
R\pi_{M*} \BQ_M[ \dim M /2] \simeq \bigoplus_{i=0}^{\dim M} P_k [-k], \quad P_k= {^\mathfrak{p}\CH}^k( R\pi_* \BQ_M[\dim M/2] ).
\]
Here $P_k$ can either be viewed as a perverse sheaf or a holonomic $\CD_B$-module on $B$. 

The rough version of the Fourier--Mukai/Decomposition correspondence is:
\begin{equation}\label{FM/D0}
    \phi_{\mathrm{FM}}(\Omega_M^k) \textup{ ``$\simeq$'' } P_k, \quad \textup{for all } k \in \BZ.
\end{equation}
Clearly (\ref{FM/D0}) does not make sense if we interpret it in a na\"ive way; the left-hand side is a coherent object on the symplectic variety $\check{M}$ while the right-hand side lies in the abelian category of $\CD_B$-modules. Ultimately, we are able to modify both sides of (\ref{FM/D0}) to get a mathematically precise formulation; we refer to Conjecture \ref{main_conj} for the statement. 

Before diving into more technical details, we briefly summarize some main ingredients and ideas. By Saito's theory of mixed Hodge modules \cite{S1,S2}, the holonomic $\CD_B$-module $P_k$ admits a natural good filtration. Taking the associated graded object, we obtain a coherent sheaf $\mathrm{gr}(P_k)$ on the symplectic variety $T^*B$, which can be viewed as the ``classical limit'' of~$P_k$. On the other hand, in the case when $\check{M} \to B$ admits a section, the normal bundle of the section is identified with the cotangent bundle by the symplectic form of $\check{M}$. Then, the modified version of (\ref{FM/D0}) is an isomorphism which holds in the common formal neighborhood $\hat{B}$ of $B$ in both symplectic varieties $\check{M}$ and $T^*B$, between two coherent objects:
\[
\phi_{\mathrm{FM}}(\Omega_M^k)|_{\hat{B}} \quad \mathrm{and} \quad \mathrm{gr}(P_k)|_{\hat{B}}.
\]
In other words, the Hodge module $P_k$ ``quantizes'' the restriction of $\phi_{\mathrm{FM}}(\Omega_M^k)$ to $\hat{B}$.

\subsection{Fourier--Mukai}\label{Sec0.2}
Throughout, we work over the complex numbers $\BC$. Modifying Definition $2.6$ of \cite{AF}, we say that \[
\pi_M: (M, \sigma) \to B, \quad \dim M = 2n
\]
is an \emph{integrable system} if $(M, \sigma)$ is a nonsingular holomorphic symplectic variety carrying a Lagrangian fibration
\begin{equation*}\label{Lag}
\pi_M: M \to B
\end{equation*}
such that $\pi_M$ is projective, the base $B$ is nonsingular, and it further satisfies that:
\begin{enumerate}
    \item[(i)] every geometric fiber of $\pi_M$ is integral, and
    \item[(ii)] the fibration $\pi_M$ admits a section $s_B: B \rightarrow M$.\footnote{In the setting of Arinkin--Fedorov, an integrable system is not required to have a section. Here we impose a stronger assumption.}
\end{enumerate}
Hitchin and Beauville--Mukai systems (Example \ref{ex1.1}) provide rich examples of integrable systems. By \cite[Corollary 2.7]{AF}, the identity component of the relative Picard space is a smooth group (algebraic) space $\pi_P: P(:=\mathrm{Pic}^0(M/B)) \to B$ over the base $B$ which admits a universal line bundle 
\[
\CL \rightarrow M\times_B P.
\]
We may assume that $\CL$ is trivialized along the $0$-sections $0_B: B\hookrightarrow P$ and $s_B: B \hookrightarrow M$. 

Our main character is Arinkin--Fedorov's (partial) Fourier--Mukai transform \cite{AF}:
\begin{equation}\label{PFM}
\phi_{\mathrm{FM}}: D^b\mathrm{Coh}(M) \rightarrow D^b\mathrm{Coh}(P),\quad \CE \mapsto R{q_{P*}}(q_M^* \CE \otimes \CL^\vee) \otimes \pi_P^*\omega_B^\vee[n]
\end{equation}
where the $q_{(-)}$ are the natural projections from $M \times_BP$. In general, the existence of a dual Lagrangian fibration $\check{M} \to B$ and the Fourier--Mukai equivalence (\ref{FM_conj}) is conjectural and wide open for most cases. If they exist, $\check{M}$ is expected to be a holomorphic symplectic partial compactification of $P \to B$ and (\ref{FM_conj}) should extend (\ref{PFM}). Nevertheless, since we are only interested in the Fourier--Mukai image near the section 
\[
B \subset P \,(\subset \check{M})
\]as we explained at the end of Section \ref{Sec0.1}, the rigorously defined partial transform (\ref{PFM}) is enough for our purpose.

Concerning the decomposition side, recall that by \cite{S1,S2}, the perverse sheaves $P_k$ are naturally Hodge modules on $B$, whose associated graded objects induce coherent sheaves
\[
\mathrm{gr}(P_k) \in \mathrm{Coh}(T^*B).
\]
As we will see in Proposition \ref{prop1.4}, a choice of $\pi_M$-relatively ample bundle $\Theta$ on $M$ induces an identification 
$\widehat{\kappa}_{\Theta}$ between the formal neighborhoods of $B$ inside $P$ and $T^*B$ respectively, each of which we denote by $\hat{B}$:
\[
\hat{B} \subset P, \quad \hat{B} \subset T^*B.
\]

Using this identification, we formulate the Fourier--Mukai/Decomposition correspondence as follows; see Conjecture \ref{main_conj2} for a more precise formulation.

\begin{conj}[Fourier--Mukai/Decomposition correspondence]\label{main_conj}
For an integrable system $\pi_M: (M, \sigma) \to B$, we have
\begin{equation}\label{main}
\phi_{\mathrm{FM}}(\Omega_M^k)|_{\hat{B}} \simeq \mathrm{gr}(P_k)|_{\hat{B}} \in D^b\mathrm{Coh}(\hat{B}).
\end{equation}
\end{conj}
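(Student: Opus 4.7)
The plan is to establish (\ref{main}) in three stages: (i) construct a canonical comparison map between the two sides using the universal behavior of the Poincar\'e bundle on the formal neighborhood $M\times_B \hat B$; (ii) check it is an isomorphism over the smooth locus of $\pi_M$ by a direct abelian-scheme computation; (iii) extend across the discriminant using Cohen--Macaulayness and the support results quoted earlier in the paper. For step~(i), the key point is that on $M\times_B \hat B$ the bundle $\CL$, trivialized along $0_B$ and $s_B$, admits a formal ``exponential'' presentation with argument in the maximal ideal of $\CO_{\hat B}$. The symplectic identification $\widehat{\kappa}_{\Theta}\colon \hat B\xrightarrow{\sim}\widehat{T^*B}$ of Proposition~\ref{prop1.4} recasts this expansion as the universal formal family of deformations of $\CO_M$ parametrized by $\widehat{T^*B}$. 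Pairing $\CL^\vee$ against $\Omega^k_M$ in the integrand of (\ref{PFM}) and organizing Taylor coefficients via the filtration coming from $0\to \pi_M^*\Omega^1_B\to \Omega^1_M\to \Omega^1_{M/B}\to 0$ turns $Rq_{P*}(q_M^*\Omega^k_M\otimes \CL^\vee)|_{\hat B}$ into the Rees-type module that Saito's theory attaches to the filtered de Rham pushforward defining $\mathrm{gr}(P_k)$; this yields the sought natural map.

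For step~(ii), on the complement $B^\circ\subset B$ of the discriminant, $\pi_M^\circ\colon M^\circ\to B^\circ$ is an abelian scheme with section. Here $\Omega^1_{M^\circ/B^\circ}$ is pulled back from $B^\circ$, and the Lagrangian section gives $s_B^*\Omega^1_{M/B}\simeq T_{B^\circ}$ via the symplectic form; combined with Matsushita's $R^q\pi_{M*}\CO_M\simeq \Omega^q_B$, one computes
\[
\phi_{\mathrm{FM}}(\Omega^k_{M^\circ})|_{\hat{B^\circ}} \simeq \bigoplus_{p+q=k} \Omega^p_{B^\circ}\otimes \textstyle\wedge^q T_{B^\circ}
\]
completed along $B^\circ$ and viewed on $\widehat{T^*B^\circ}$ via the natural Rees structure. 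On the Hodge side, the VHS $R^k\pi_{M*}\BQ_{M^\circ}$ is the standard abelian one, whose Hodge graded pieces $R^{k-p}\pi_{M*}\Omega^p_{M/B}$ collapse, by the same inputs, to the identical expression. Hence the comparison map is an isomorphism on $\hat{B^\circ}$. Independence of the choice of $\Theta$ should then be checked by comparing two such identifications via the translation symmetry of the formal group $\hat B\subset P$.

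For step~(iii), both $\phi_{\mathrm{FM}}(\Omega^k_M)|_{\hat B}$ and $\mathrm{gr}(P_k)|_{\hat B}$ are Cohen--Macaulay coherent sheaves of pure middle dimension with support controlled by the same higher-discriminant stratification (per the paper's structural results). A morphism between two such sheaves that is an isomorphism on a dense open is an isomorphism globally, since neither side has embedded or higher-codimensional components. The main obstacle, and the reason the conjecture remains open in full generality, is step~(i): while the formal exponential expansion of $\CL$ and its pairing with $\Omega^\bullet_M$ make perfect sense on $\hat B$, matching the resulting Rees module with Saito's Hodge filtration requires a local model for the partial Fourier--Mukai transform adapted to each singularity type in the fibers. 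Nodal fibers yield to a direct analysis of the compactified relative Picard space, and cuspidal fibers to a normal-crossings model; controlling arbitrary singular fibers in families---particularly those with non-unipotent monodromy or non-log-smooth degenerations---is where the genuine difficulty lies.
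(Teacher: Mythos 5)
This statement is a conjecture, not a theorem: the paper itself only establishes it in the border cases $k=0,2n$ (Corollary \ref{cor3.2}), for smooth fibrations (Theorem \ref{thm0.4}), and for elliptic surfaces over non-proper curves with nodal fibers (Theorem \ref{thm0.5}), plus a sketch when cusps appear. You acknowledge at the end that your step~(i) is precisely where the open difficulty sits, so your proposal is really a plan rather than a proof. That is fair, but beyond what you flag, the two later steps each have a concrete flaw.

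Step~(ii) is not an accurate account even of the smooth case. Over $B^\circ$ you claim $\phi_{\mathrm{FM}}(\Omega^k_{M^\circ})|_{\hat{B^\circ}}\simeq\bigoplus_{p+q=k}\Omega^p_{B^\circ}\otimes\wedge^q T_{B^\circ}$, but this is only the associated graded with respect to the filtration, not the object itself; neither $\phi_{\mathrm{FM}}(\Omega^k_M)$ nor $\mathrm{gr}(P_k)$ is scheme-theoretically supported on the zero section or splits into such a sum. The short exact sequence \eqref{eq:Omega1} and its Hodge-module counterpart \eqref{eq:P1} carry nontrivial extension classes, and the real content of the smooth case (Section \ref{smooth}) is the proof, via the Katz--Oda description of the Gauss--Manin connection and the Donagi--Markman cubic condition (Lemma \ref{dm}), that these two extension classes coincide, followed by a Pontryagin-product argument to pass from $k=1$ to higher $k$. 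Writing the answer as a direct sum ``viewed via the natural Rees structure'' papers over exactly the step that requires an argument.

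Step~(iii) fails as stated. For two Cohen--Macaulay sheaves of the same pure dimension, a morphism that is an isomorphism over a dense open need not be an isomorphism: purity of the source forces the kernel to vanish, but the cokernel can perfectly well be supported in lower dimension (think of multiplication by a coordinate on $\CO_{\BA^1}$). So you cannot pass from generic agreement to a global isomorphism without further input, such as equality of Hilbert polynomials or a deeper structural identification. Consistently with this, the paper's treatment of the first singular case (Section~6) is not an extension-by-CM argument at all: it builds both sides as \emph{good admissible sheaves} with a two-step filtration, matches the building blocks, and then invokes Proposition \ref{prop6.2}, which uses the non-properness of $B$ in an essential way to kill an $\mathrm{Ext}^1$ group so that the remaining extension class is determined by the cubic form \eqref{cubic2}. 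Your CM + dense-open heuristic would bypass all of this, and would, for instance, ``prove'' the statement over a proper base $B$, where the authors explicitly restrict to $B$ non-proper.
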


Conjecture \ref{main_conj} is hence a coherent--constructible correspondence for an integrable system $\pi_M: M \to B$, which connects the coherent object $\Omega_M^k$ on $M$ to the constructible object~$P_k$ on $B$.

\subsection{Lagrangian Cohen--Macaulay sheaves}

Our first result is a consistency check for Conjecture~\ref{main_conj}, in which we check both sides are given by Cohen-Macaulay sheaves and match the reduced (Lagrangian) supports under~(\ref{1223}).

For the Hodge module $P_k$, these are known already as follows.

 We denote by
\[
\mathrm{supp}^{\mathrm{red}}( \mathrm{gr}(P_k)) \subset T^*B,
\]
the support of $\mathrm{gr}(P_k)$ endowed with the reduced scheme structure; it
is a conical Lagrangian depending only on the underlying perverse sheaf $P_k$. More precisely, it is the singular support~$\mathrm{SS}(P_k)$ of the perverse sheaf $P_k$, and is  described by the \emph{higher discriminants} of the morphism $\pi_M: M \to B$ \cite{disc}. For any $i\geq 0$, Migliorini--Shende introduced a higher discriminant $\Delta^i(\pi_M) \subset B$ determined by the topology of $\pi_M: M \to B$, which satisfies
\[
\mathrm{codim}_{B} \Delta^i(\pi_M)  \geq i.
\]
They further proved for any $k$ that
\begin{equation}\label{Lambda}
\mathrm{SS}(P_k) \subset \Lambda: =  \bigcup_i \bigcup_{Z_i} \overline{T^*_{Z_i}B}.
\end{equation}
Here $Z_i$ runs through purely $i$-codimensional irreducible components of $\Delta^i(\pi_M)$, and $\overline{T^*_{Z_i}B}$ stands for the closure of the conormal bundle of the nonsingular locus of $Z_i$. Clearly the conical Lagrangian $\Lambda \subset T^*B$ only depends on the topology of $\pi_M: M \to B$.

\begin{thm}[Saito, Migliorini--Shende]\label{gr(P)}
For any $k$ the coherent sheaf $\mathrm{gr}(P_k)$ is a Cohen--Macaulay sheaf of dimension $n$ with 
\[
\mathrm{supp}^{\mathrm{red}}(\mathrm{gr}(P_k)) \subset \Lambda.
\]
\end{thm}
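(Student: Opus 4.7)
The plan is to assemble the statement from two results already in the literature: Saito's Cohen--Macaulayness theorem for the Hodge filtration on a pure Hodge module, and Migliorini--Shende's constraint on singular supports via higher discriminants.

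First I would lift the decomposition $R\pi_{M*}\BQ_M[\dim M/2] \simeq \bigoplus_k P_k[-k]$ to the category of mixed Hodge modules. Since $M$ is smooth and $\pi_M$ is projective, Saito's direct image theorem applied to the constant Hodge module $\BQ_M^H[\dim M]$ equips each $P_k$ with the structure of a polarizable pure Hodge module on $B$, and hence with a good Hodge filtration $F_\bullet$ on its underlying regular holonomic $\CD_B$-module. By definition, $\mathrm{gr}(P_k) = \mathrm{gr}^F_\bullet P_k$ as a coherent sheaf on $T^*B = \Spec_B \mathrm{Sym}_{\CO_B}(T_B)$. I would then invoke Saito's Cohen--Macaulayness theorem \cite{S1,S2}: for any polarizable pure Hodge module on a smooth base of dimension $n$, the associated graded of its Hodge filtration is Cohen--Macaulay of pure dimension $n$ as a coherent sheaf on $T^*B$. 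Applied to each $P_k$, this yields both the Cohen--Macaulay property and the dimension claim.

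For the support bound, the reduced support of $\mathrm{gr}^F_\bullet P_k$ coincides with the characteristic variety of the regular holonomic $\CD_B$-module $P_k$, which in turn coincides with the singular support $\mathrm{SS}(P_k)$ of the underlying perverse sheaf via Kashiwara--Malgrange. Migliorini--Shende's main theorem \cite{disc} constrains $\mathrm{SS}(R\pi_{M*}\BQ_M[\dim M/2])$ to lie in $\Lambda$ (as recorded in \eqref{Lambda}), and since $P_k$ is a direct summand of this constructible complex, the containment descends to $\mathrm{SS}(P_k) \subset \Lambda$.

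The only genuinely nontrivial input is Saito's Cohen--Macaulayness theorem; everything else is formal once the Hodge-module lift of the BBD decomposition is in place and the Migliorini--Shende bound is invoked. The main anticipated obstacle is therefore conceptual rather than computational: one must be willing to import Saito's theory as a black box and identify the $\mathrm{gr}$ appearing in the statement with $\mathrm{gr}^F_\bullet$ of the Hodge-module-theoretic lift of $P_k$.
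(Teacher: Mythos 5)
Your proposal matches the paper's proof: the Cohen--Macaulay and dimension statements are imported as a black box from Saito's theory applied to the pure Hodge module structure on $P_k$ (cited to \cite{S1}), and the support containment is deduced from the Migliorini--Shende bound $\mathrm{SS}(P_k) \subset \mathrm{SS}(R\pi_{M*}\BQ_M) \subset \Lambda$ together with the identification of $\mathrm{supp}^{\mathrm{red}}(\mathrm{gr}(P_k))$ with the singular support. There is no substantive difference in route or key lemma.
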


The second part of the theorem follows from \cite{disc} as discussed above; the first part is a property of any mixed Hodge module on a nonsingular variety \cite{S1}.

We first establish the counterpart of Theorem \ref{gr(P)} on the Fourier--Mukai side. In Section~\ref{Sec2.4}, we define a closed subset $\Lambda' \subset P$ using the higher discriminants $\Delta^i(\pi_M)$, parallel to $\Lambda \subset T^*B$; by construction, the restriction of $\Lambda'$ to the formal neighborhood of $B$ recovers the conical Lagrangian $\Lambda \subset T^*B$ by Corollary \ref{cor2.4}:
\begin{equation}\label{1223}
\Lambda' |_{\hat{B}} \simeq \Lambda |_{\hat{B}} \subset \hat{B}.
\end{equation}

\begin{thm}\label{thm0.3}
The object 
\[
\phi_{\mathrm{FM}}(\Omega_M^k) \in D^b\mathrm{Coh}(P)
\]
is a Cohen--Macaulay sheaf of dimension $n$ concentrated in degree 0 with
\begin{equation*}
\mathrm{supp}^{\mathrm{red}}(\phi_{\mathrm{FM}}(\Omega_M^k))|_{\hat{B}} \subset {\Lambda'}|_{\hat{B}}.
\end{equation*}
\end{thm}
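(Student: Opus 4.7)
Proof plan.

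The strategy combines three ingredients: (a) miracle flatness for $\pi_M: M \to B$, which makes $q_P$ flat and gives cohomology-and-base-change for $\phi_{\mathrm{FM}}$; (b) the Lagrangian identification $\Omega_{M/B} \simeq \pi_M^*T_B$ together with Arinkin--Fedorov's computation $\phi_{\mathrm{FM}}(\mathcal{O}_M) \simeq 0_{B*}\mathcal{O}_B$, which pins down the transform over the smooth locus; and (c) the Migliorini--Shende higher-discriminant theorem, controlling the cohomology-jumping along singular fibers.

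I would first work over the smooth locus $B^\circ \subset B$ of $\pi_M$. On $M|_{B^\circ}$ the Lagrangian condition yields $\Omega_{M/B} \simeq \pi_M^*T_B$, so the decreasing filtration
\[
\CF^i := \operatorname{Image}\!\bigl(\pi_M^*\Omega^i_B \otimes \Omega^{k-i}_M \to \Omega^k_M\bigr)
\]
has graded pieces $\pi_M^*(\Omega^i_B \otimes \wedge^{k-i}T_B)|_{B^\circ}$, all pullbacks from the base. By projection formula and Arinkin--Fedorov,
\[
\phi_{\mathrm{FM}}(\pi_M^*F) \simeq \pi_P^*F \otimes \phi_{\mathrm{FM}}(\mathcal{O}_M) \simeq 0_{B*}F,
\]
so $\phi_{\mathrm{FM}}(\Omega^k_M)|_{\pi_P^{-1}(B^\circ)}$ is an iterated extension of the sheaves $0_{B*}(\Omega^i_B \otimes \wedge^{k-i}T_B)|_{B^\circ}$. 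In particular, over the smooth locus, $\phi_{\mathrm{FM}}(\Omega^k_M)$ is already a coherent sheaf in degree $0$, locally free of rank $\binom{2n}{k}$ on $0_B$, Cohen--Macaulay of dimension $n$, and supported on the zero section $0_B \subset \Lambda'$.

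To extend to all of $P$ and to bound the support on $\hat{B}$, miracle flatness of $q_P$ yields, for every $p \in P$ with $b = \pi_P(p)$,
\[
Li_p^*\phi_{\mathrm{FM}}(\Omega^k_M) \simeq R\Gamma\bigl(M_b,\ \Omega^k_M|_{M_b} \otimes \CL_p^\vee\bigr)[n].
\]
For $p = (b, \xi) \in \hat{B}$ with $(b, \xi) \notin \Lambda$, I would deduce the vanishing of the right-hand side from the Migliorini--Shende theorem, using that the (twisted) fiber cohomology of $\Omega^k_M$ is governed by the same higher discriminants $\Delta^i(\pi_M)$ that cut out $\Lambda$. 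This yields $\operatorname{supp}^{\mathrm{red}}(\phi_{\mathrm{FM}}(\Omega^k_M))|_{\hat{B}} \subset \Lambda'|_{\hat{B}}$. The global Cohen--Macaulay and degree-$0$ statements then follow by combining the smooth-locus identification with the support bound and a local $\operatorname{Ext}^{*}(-, \omega_P)$-vanishing check, using a Koszul-type resolution of $0_B \subset P$ (whose conormal is identified with $\Omega_B$ via the symplectic form). The hard step is the fiber vanishing $R\Gamma(M_b, \Omega^k_M|_{M_b} \otimes \CL_p^\vee) = 0$ for $(b, \xi) \notin \Lambda$ uniformly across singular integral Lagrangian fibers $M_b$, which requires a careful Hodge-theoretic analysis of $\Omega^k_M|_{M_b}$ beyond the smooth-fiber filtration argument.
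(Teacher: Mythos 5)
Your proposed framework is right in outline: you correctly identify a Verdier-duality criterion (transform of a locally free sheaf lives in $[-n,0]$, its dual in $[0,n]$, so a dimension-$\le n$ support bound forces a Cohen--Macaulay sheaf in degree $0$), and you correctly see that filtering $\Omega^k_M$ via the cotangent sequence is the way to go — this is essentially the paper's Theorem \ref{thm4.1} and Lemma \ref{lem4.4}. The smooth-locus description you give is also consistent with what the paper establishes.

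However, there is a genuine gap at the step you yourself flag as ``the hard step.'' Your route to the support bound — base-changing to fibers and ``deducing vanishing of $R\Gamma(M_b, \Omega^k_M|_{M_b}\otimes\CL_p^\vee)$ from the Migliorini--Shende theorem'' — is not well founded. Migliorini--Shende controls the singular support of the constructible complex $R\pi_{M*}\BQ_M$, i.e.\ it bounds $\mathrm{SS}(P_k)$; it says nothing about vanishing of twisted coherent cohomology of $\Omega^k_M$ on singular fibers. Asserting that such coherent vanishing is ``governed by the same higher discriminants'' is precisely the content of the main Conjecture \ref{main_conj}, so the argument as proposed is circular. What the paper actually does at this step is entirely different: it filters $\Omega^k_M$ into $A$-equivariant pieces $\pi_M^*\Omega_B^{k-i}\otimes \wedge^i\Omega^1_{M/B}$ (Lemma \ref{lem4.4}) and then, for any $A$-equivariant $\CK$, proves (Proposition \ref{prop4.2}) that $\mathrm{supp}^{\mathrm{red}}(\phi_{\mathrm{FM}}(\CK))$ lies in a countable union of cohomology support loci $K_b\subset\mathrm{Pic}^0(M_b)$. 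The key input is not Hodge theory on singular fibers, but the representation theory of the commutative group extension $1\to\BC^*\to\widetilde A_b\to A_b\to 1$ (Arinkin--Fedorov): any nonzero twisted cohomology group admits a one-dimensional $\widetilde A_b$-subrepresentation whose character splits the extension, forcing $l|_{A_b}$ trivial and hence $l\in K_b$, with $\dim K_b\le\delta(b)$ by affineness. This $\delta$-inequality then bounds $\dim K\le n$ and places $K|_{\hat B}\subset\Delta_P^{\mathrm{aff}}|_{\hat B}$. Without this group-theoretic mechanism, the support bound does not follow, and the Cohen--Macaulay criterion has nothing to bite on.
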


In general the Fourier--Mukai image of a coherent sheaf is a complex concentrated in degrees $[-n,0
]$. Therefore the proof of Theorem \ref{thm0.3} relies on properties of the sheaves~$\Omega^k_M$.  As a consequence of Theorem \ref{thm0.3}, Conjecture \ref{main_conj} is reduced to an isomorphism of two Cohen--Macaulay sheaves on $\hat{B}$.

\subsection{Smooth and elliptic fibrations}
As further evidence for Conjecture \ref{main_conj}, we prove it 
in the following cases.

First, we verify the conjecture for smooth Lagrangian fibrations. 

\begin{thm}\label{thm0.4}
Conjecture \ref{main_conj} holds if $\pi_M$ is smooth.
\end{thm}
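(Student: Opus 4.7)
When $\pi_M$ is smooth, every fiber is an abelian variety (integral by assumption), and $\pi_M$, $\pi_P$ are dual abelian schemes over $B$. The higher discriminants $\Delta^i(\pi_M)$ vanish for all $i\geq 1$, so by Theorems~\ref{thm0.3} and~\ref{gr(P)} both sides of~(\ref{main}) are Cohen--Macaulay sheaves of dimension $n$ with reduced support equal to the zero section of $\hat B$. My approach is to compute both sides explicitly as $\CO_{\hat B}$-modules and identify them under $\widehat{\kappa}_\Theta$.

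On the Hodge-module side, $P_k=R^k\pi_{M*}\BQ_M[n]$ is a shifted polarized variation of Hodge structure whose underlying $\CD_B$-module is the relative de Rham cohomology with Gauss--Manin connection. Degeneration of the Hodge-to-de Rham spectral sequence yields
\[
\mathrm{gr}(P_k)=\bigoplus_{p+q=k}R^q\pi_{M*}\Omega^p_{M/B}.
\]
The Lagrangian identification $\Omega^1_{M/B}\simeq\pi_M^*T_B$, together with the polarization-induced identification $R^1\pi_{M*}\CO_M=\mathrm{Lie}(P/B)\simeq\Omega^1_B$ and the K\"unneth formula $R^\bullet\pi_{M*}\CO_M\simeq\bigwedge^\bullet R^1\pi_{M*}\CO_M$ for abelian schemes, reduces each piece to $R^q\pi_{M*}\Omega^p_{M/B}\simeq\bigwedge^p T_B\otimes\Omega^q_B$. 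The $\mathrm{Sym}\,T_B$-module structure on $\mathrm{gr}(P_k)$ is governed by Kodaira--Spencer.

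On the Fourier--Mukai side, I apply $\phi_{\mathrm{FM}}$ to the decreasing cotangent filtration $F^\bullet$ on $\Omega^k_M$, whose graded pieces $\mathrm{gr}^p_F\Omega^k_M\simeq\pi_M^*(\Omega^p_B\otimes\bigwedge^{k-p}T_B)$ are pullbacks from $B$ under the Lagrangian identification. The projection formula gives
\[
\phi_{\mathrm{FM}}(\pi_M^*\CF)\simeq \pi_P^*\CF\otimes\phi_{\mathrm{FM}}(\CO_M),
\]
and the Arinkin--Fedorov theorem --- the $k=0$ case of the conjecture --- identifies $\phi_{\mathrm{FM}}(\CO_M)|_{\hat B}\simeq\CO_{\hat B}$ under $\widehat{\kappa}_\Theta$. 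Hence each graded piece on the Fourier--Mukai side is $(\Omega^p_B\otimes\bigwedge^{k-p}T_B)\otimes_{\CO_B}\CO_{\hat B}$, matching $\bigwedge^p T_B\otimes\Omega^{k-p}_B$ from the Hodge calculation (after absorbing the canonical twists by $\omega_B$ and $\det\pi_{M*}\Omega^1_{M/B}$ into $\widehat{\kappa}_\Theta$).

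The remaining task --- and the main obstacle --- is to lift this identification of graded pieces to an honest isomorphism of $\CO_{\hat B}$-modules. Both sides carry a natural action of the normal directions to $B$ in $\hat B$: on the Hodge side this is Kodaira--Spencer (equivalently, Griffiths transversality of the Gauss--Manin connection), which for a Lagrangian fibration is controlled by the Donagi--Markman cubic form on $T_B$; on the Fourier--Mukai side the analogous structure comes from the deformation of $\phi_{\mathrm{FM}}(\CO_M)$ away from the zero section, governed by the Atiyah class of $\CL$ along $B\times_B B\subset M\times_B P$. Verifying that these two pieces of infinitesimal data correspond under $\widehat{\kappa}_\Theta$ --- so that no extension class obstructs the lift from the graded to the filtered isomorphism --- is the crux of the argument.
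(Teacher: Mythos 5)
Your proposal correctly sets up the problem and correctly identifies the decisive difficulty, but you stop precisely where the real content of the theorem begins: you explicitly say that "verifying that these two pieces of infinitesimal data correspond under $\widehat{\kappa}_\Theta$ \dots is the crux of the argument," and then you do not carry it out. Matching graded pieces is the routine half of the argument --- both sides are $\bigoplus_p \wedge^p T_B \otimes \Omega^{k-p}_B$ by the Lagrangian isomorphism $\Omega^1_{M/B}\simeq\pi_M^*T_B$, the polarization-induced $R^1\pi_{M*}\CO_M\simeq\Omega^1_B$, and Proposition~\ref{prop3.1}; the substantive claim is that the two $\CO_{\hat B}$-module (equivalently, Higgs-field) structures agree. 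Without that, you have an isomorphism of graded objects but no isomorphism of sheaves on $\hat B$, so the proof is incomplete.

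The paper resolves this by a two-step structure you may want to compare with your one-shot filtration argument. First, it proves the $k=1$ case in isolation (Proposition~\ref{prop5.2}): the cotangent sequence and the Hodge filtration each give a single extension class, and after a careful decomposition of the relevant $\mathrm{Ext}^1$ groups into "pushforward-splittable" and "normal-direction" components, the matching of the normal-direction components reduces, via the Katz--Oda description of the Gauss--Manin connection, exactly to the symmetry of the Donagi--Markman cubic (Lemma~\ref{dm}). This is where your "Atiyah class of $\CL$ along $B\times_B B$" would have to be unpacked, and it is not a formal step: one must identify a component of $L0_B^*$ of the extension class with a component of $R\pi_{M*}$ of the Katz--Oda class, and then invoke the symmetry of the cubic to pass from $\overline\nabla:\CV^{1,n-1}\to\CV^{0,n}\otimes\Omega^1_B$ to $\overline\nabla:\CV^{1,0}\to\CV^{0,1}\otimes\Omega^1_B$. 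Second, rather than comparing full length-$(k+1)$ filtrations for general $k$ (which would require controlling many extension classes simultaneously), the paper bootstraps: Proposition~\ref{prop3.3} converts tensor products to Pontryagin products, Lemmas~\ref{lem5.4} and~\ref{lem5.5} identify $\mathrm{gr}(P_1)^{\star k}$ with the Higgs bundle $(\mathrm{gr}(\CV_1)^{\otimes k},\overline\nabla^{\otimes k})$, and the Künneth isomorphism $\CV_k\simeq\wedge^k\CV_1$ for abelian schemes lets one pass to antisymmetric tensors on both sides at once. That device is what makes the higher-$k$ cases essentially free once $k=1$ is done; your filtration-by-filtration approach would face a genuinely harder bookkeeping problem to achieve the same.
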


Next, we prove the conjecture when $M$ is an elliptic surface over a non-proper curve $B$ which has at worst nodal fibers. This is the first nontrivial case where singular fibers appear.

\begin{thm}\label{thm0.5}
Conjecture \ref{main_conj} holds if $M$ is an elliptic surface over a non-proper curve $B$ with nodal singular fibers.
\end{thm}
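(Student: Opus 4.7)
My plan is to localize on $B$, reducing the global isomorphism (\ref{main}) to a formal statement near each point $b \in B$. By Theorems \ref{gr(P)} and \ref{thm0.3}, both sides of (\ref{main}) are Cohen--Macaulay sheaves of pure dimension $n = 1$ on $\hat{B}$ with the same expected support $\Lambda|_{\hat{B}} \simeq \Lambda'|_{\hat{B}}$; it therefore suffices to construct a natural comparison morphism (for example, from the Abel--Jacobi/cycle-class map of the universal line bundle $\CL$ combined with the Hodge filtration on $P_k$) and to verify it is an isomorphism on the formal stalk at each $b$. Since $B$ is non-proper, there is no global gluing obstruction to such a pointwise strategy.

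At any point over which $\pi_M$ is smooth, I can restrict to a Zariski-open subset $U \subset B$ complementary to the finitely many nodal points and apply Theorem \ref{thm0.4} directly to $\pi_M^{-1}(U) \to U$. The new content of Theorem \ref{thm0.5} is therefore the analysis at a nodal point $b_0 \in B$. After passing to a formal/\'etale neighborhood I would fix the standard nodal model $\pi_M \colon \{xy = t\} \to \Spec \BC[[t]]$, for which the relative Picard $\pi_P \colon P \to \hat{B}_{b_0}$ is explicit: its identity fiber over $b_0$ is $\BG_m$, interpolated by the elliptic fibers over nearby smooth points.

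Since $\dim M = 2$ one must treat $k = 0,1,2$. The cases $k = 0$ and $k = 2n = 2$ reduce to Arinkin--Fedorov together with relative Grothendieck--Serre duality on $\pi_M$ (using the symplectic identification $\omega_{M/B} \simeq \pi_M^*\omega_B$): both $\phi_{\mathrm{FM}}(\CO_M)$ and $\phi_{\mathrm{FM}}(\omega_M)$ are twisted structure sheaves of the zero section $s_B(B) \subset P$, and on the decomposition side $P_0, P_2 = \BQ_B[1]$ (with a Tate twist for $P_2$) because all fibers are irreducible and connected; the associated graded sheaves match via $\widehat{\kappa}_\Theta$. The genuinely new case is $k = 1$. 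For this I would use the exact sequence
\begin{equation*}
0 \to \pi_M^*\Omega_B^1 \to \Omega_M^1 \to \Omega_{M/B}^1 \to 0,
\end{equation*}
so that $\phi_{\mathrm{FM}}(\pi_M^*\Omega_B^1) = \pi_P^*\Omega_B^1 \otimes \phi_{\mathrm{FM}}(\CO_M)$ by the projection formula contributes only to the zero section, and the essential term is $\phi_{\mathrm{FM}}(\Omega_{M/B}^1)$, which near $b_0$ spreads off the zero section along the fiber $P_{b_0} \simeq \BG_m$. On the other side, $P_1$ is Saito's middle extension of the rank-two variation $R^1\pi_{M*}\BQ|_U$ with unipotent monodromy at $b_0$, whose Hodge filtration is governed by Deligne's canonical extension of the Hodge bundle $\pi_{M*}\omega_{M/B}$; the associated graded $\mathrm{gr}(P_1)$ consequently splits near $b_0$ into a rank-one piece on the zero section of $T^*B$ and a rank-one piece on the cotangent fiber $T^*_{b_0}B$, reflecting the monodromy-invariant and vanishing-cycle directions.

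The main obstacle is the explicit matching for $k = 1$ at the nodal fiber: identifying $\phi_{\mathrm{FM}}(\Omega_{M/B}^1)$ on the $\BG_m$-component of $P|_{\hat{B}_{b_0}}$ with the associated graded of Deligne's canonical extension, equipped with its unipotent weight filtration. Concretely this becomes a direct calculation on the Tate-curve model, with the delicate bookkeeping being the normalization introduced by $\widehat{\kappa}_\Theta$ (which depends on the choice of relatively ample $\Theta$). Once the stalk at $b_0$ is identified and the smooth case is invoked on the complement, Cohen--Macaulayness of both sheaves (Theorems \ref{gr(P)} and \ref{thm0.3}) upgrades the pointwise agreement to the desired global formal isomorphism.
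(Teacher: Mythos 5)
Your high-level picture is correct -- both sides should be understood as iterated extensions, with the nontrivial piece living off the zero section and governed by the degeneration of Hodge structure at the nodal point (Deligne's canonical extension, Gauss--Manin, and ultimately the Donagi--Markman cubic form). But there is a genuine gap in the logic, and it sits exactly at the step where non-properness of $B$ should enter.

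You propose to build a natural comparison morphism, verify it on formal stalks, and then assert that ``since $B$ is non-proper there is no global gluing obstruction.'' This is not a valid inference. You never construct the global morphism; a vague appeal to the Abel--Jacobi map does not produce a map of sheaves on $\hat{B}$ that one can then test stalk-by-stalk. And even if one had isomorphisms on each formal neighborhood separately, such local isomorphisms do not automatically globalize: two coherent sheaves on a non-proper curve (or on the formal thickening $\hat{B}$ of one) can be locally isomorphic everywhere without being globally isomorphic. The role non-properness plays in the paper's proof is much more specific: it kills the $\Ext$ group $\mathrm{Ext}^1_B(\Omega_B^{1\vee}, \Omega^1_B)$, which is precisely what lets one show (Proposition~\ref{prop6.2}) that a ``good admissible sheaf'' is determined by a single extension class in $\mathrm{Ext}^1_{T^*B}(0_{B*}\Omega_B^{1\vee}, 0_{B*}\Omega^1_B(D))$. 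The paper then computes this class for both sheaves and finds the same answer $(0,[\overline{\nabla}])$, with $[\overline{\nabla}] \in H^0(B,(\Omega^1_B)^{\otimes 3}(D))$ the meromorphic cubic form of \eqref{cubic2}. This is a \emph{global} classification argument, not a local-to-global patching.

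Two smaller issues worth flagging. First, you say ``$\mathrm{gr}(P_1)$ consequently splits near $b_0$ into a rank-one piece on the zero section and a rank-one piece on the cotangent fiber.'' It does not split: $\mathrm{gr}(j_{!*}\CV)$ is a genuine iterated extension (cf.~\eqref{eq:P1} and the discussion in Section~\ref{Sec6.4}), and the nontriviality of those extensions is the whole content --- if it split, the matching would be trivial. Second, your proposed sequence $0 \to \pi_M^*\Omega^1_B \to \Omega^1_M \to \Omega^1_{M/B} \to 0$ is awkward at the node because $\Omega^1_{M/B}$ is not locally free there; the paper instead works with $\Omega^1_M(\log F)$ and $\Omega^1_{M/B}(\log F)$ and manages the discrepancy with an octahedral-axiom argument (the triangle~\eqref{triangle1} and Lemma~\ref{lem6.3}(iii)). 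Your Tate-curve computation of $\phi_{\mathrm{FM}}(\Omega^1_{M/B})$ would have to grapple head-on with the torsion of $\Omega^1_{M/B}$, which the logarithmic reformulation is designed to avoid. In short: the local analysis you describe is plausible but unfinished, and the crucial mechanism that turns local information into a global isomorphism --- the admissible-sheaf rigidity of Proposition~\ref{prop6.2} --- is missing from your argument.
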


The strategy for the proof is to view both sheaves as iterated extensions of certain sheaves and then match the terms as well as the extensions.  This matching involves a delicate argument using Deligne's canonical extension.
In Section \ref{sec6.7}, we also sketch a proof of Conjecture \ref{main_conj} for non-proper 2-dimensional $M$ when cuspidal fibers appear. The argument in this setting follows the same strategy as for Theorem \ref{thm0.5} but is even more complicated due to the cuspidal fibers, so we leave the details to the interested reader.

\subsection{Relations to other work and directions}

\subsubsection{The perverse--Hodge symmetry}

The motivation for Conjecture \ref{main_conj} is an effort to understand and categorify the perverse--Hodge symmetry for Lagrangian fibrations \cite{SY}; see also \cite{HLSY, FSY, HM, SY2}.

For a Lagrangian fibration $\pi_M: M \to B$ with $M$ a $2n$-dimensional nonsingular compact irreducible symplectic variety, a perverse--Hodge symmetry was found in \cite{SY}:
\begin{equation}\label{P=F}
    \dim H^{i-n}(B, P_{k}) = \dim H^{k,i}(M),
\end{equation}
whose proof relies heavily on the global geometry of compact holomorphic symlectic/hyper-K\"ahler manifolds.

A categorification of (\ref{P=F}) was proposed recently in \cite{SY2} using the natural lift of each $P_k$ to a Hodge module on $B$. By taking graded pieces of the de Rham complex of $P_k$, we obtain a bounded complex of coherent sheaves:
\[
\mathrm{gr}_{i}\mathrm{DR}(P_k) \in D^b\mathrm{Coh}(B).
\]
The main conjecture of \cite{SY2} is:
\begin{equation}\label{P=F2}
    \mathrm{gr}_{-i}\mathrm{DR}(P_{k})[n-k] \simeq  \mathrm{gr}_{-k}\mathrm{DR}(P_{i})[n-i] \in D^b\mathrm{Coh}(B).
\end{equation}
A mysterious feature of (\ref{P=F2}) is that, unlike (\ref{P=F}), it does not need the compactness assumption of $M$. When $M$ is indeed compact, it was explained in \cite[Section 4]{SY2} that (\ref{P=F2}) recovers (\ref{P=F}).

Our attempt here goes back to the original form of (\ref{P=F}) where we view the right-hand side as cohomology of the vector bundles $\Omega_M^k$. This suggests that there may exist a coherent--constructible correspondence connecting $\Omega^k_M$ and $P_k$ directly, which ideally should hold for not necessarily compact $M$ admitting a Lagrangian fibration. Our conjecture realizes this for integrable systems, as defined in Section \ref{Sec0.2}, using the Fourier--Mukai transform.

The following proposition proves the compatibility between Conjecture \ref{main_conj}, the perverse--Hodge symmetry (\ref{P=F}), and its categorification (\ref{P=F2}) on the base $B$; it gives further evidence for Conjecture \ref{main_conj}.

\begin{prop}\label{prop0.6}
Let $\pi_M: M \to B$ be an integrable system for which (\ref{P=F2}) holds. Then we have 
\begin{equation}\label{P=F4}
L0_B^* \,\phi_{\mathrm{FM}}(\Omega_M^k) = L0_B^*\, \mathrm{gr}(P_k) \in D^b\mathrm{Coh}(B),
\end{equation}
where $0_B$ denotes both the closed embeddings of the $0$-sections in $P$ and $T^*B$. In particular, if $B$ is projective, then we have
\begin{equation}\label{P=F3}
H^i(B, L0_B^*\,\phi_{\mathrm{FM}}(\Omega_M^k) ) = H^i(B, L0_B^*\,\mathrm{gr}(P_k)).
\end{equation}
\end{prop}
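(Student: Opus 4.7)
The plan is to compute both sides of (\ref{P=F4}) explicitly in terms of graded pieces of the de~Rham complexes of the Hodge modules $P_j$ and then match them using the hypothesis (\ref{P=F2}). First, derived base change applied to the definition (\ref{PFM}) along $0_B: B\hookrightarrow P$, together with the triviality of $\CL$ along the preimage of $0_B$ (a copy of $M$), gives
\[
L 0_B^*\,\phi_{\mathrm{FM}}(\Omega_M^k) \;\simeq\; R\pi_{M*}\Omega_M^k \otimes \omega_B^\vee[n].
\]
For the Hodge-module side, I would use the Koszul resolution of $\CO_B$ as a $\mathrm{Sym}(T_B)$-module on $T^*B = \underline{\Spec}_B \mathrm{Sym}(T_B)$. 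Tensoring with $\mathrm{gr}(P_k)$ and invoking the canonical isomorphism $\wedge^j T_B \simeq \Omega^{n-j}_B \otimes \omega_B^\vee$, the resulting complex reassembles---after a reindexing of the graded pieces---into
\[
L 0_B^*\,\mathrm{gr}(P_k) \;\simeq\; \omega_B^\vee \otimes \bigoplus_i \mathrm{gr}_i\mathrm{DR}(P_k).
\]

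To put the Fourier--Mukai side in a parallel form, I would invoke Saito's strictness together with the Hodge-theoretic decomposition $\pi_{M*}\BQ_M^H[\dim M] \simeq \bigoplus_j P_j^H[n-j]$. Applying $\mathrm{gr}^F_{-k}\mathrm{DR}$ to both sides and using the identity $\mathrm{gr}^F_{-k}\mathrm{DR}(\CO_M) \simeq \Omega_M^k[2n-k]$ for the trivial Hodge module, combined with the compatibility of $\mathrm{gr}^F\mathrm{DR}$ with proper pushforward, one obtains
\[
R\pi_{M*}\Omega_M^k \;\simeq\; \bigoplus_j \mathrm{gr}_{-k}\mathrm{DR}(P_j)[k-n-j].
\]
Now feeding the hypothesis (\ref{P=F2}), rewritten as $\mathrm{gr}_{-k}\mathrm{DR}(P_j) \simeq \mathrm{gr}_{-j}\mathrm{DR}(P_k)[j-k]$, into this expansion, and combining with the $[n]$ shift from the definition of $\phi_{\mathrm{FM}}$, all internal shifts cancel; after reindexing $i = -j$ one obtains $\omega_B^\vee \otimes \bigoplus_i \mathrm{gr}_i\mathrm{DR}(P_k)$, matching the Hodge-module side and establishing (\ref{P=F4}). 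The cohomological statement (\ref{P=F3}) then follows by applying $R\Gamma(B,-)$.

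The main obstacle in executing this plan is purely bookkeeping. The shifts originating from (i) the perverse normalization fixed by $\pi_{M*}\BQ_M^H[\dim M] = \bigoplus P_j^H[n-j]$, (ii) the Tate twists on the endpoints $P_0^H$ and $P_{2n}^H$, and (iii) the $[n]$ built into the definition (\ref{PFM}) of $\phi_{\mathrm{FM}}$ all need to be reconciled with the paper's perverse-degree convention for $\mathrm{gr}_i \mathrm{DR}(P_k)$. A useful sanity check is the smooth elliptic symplectic surface with section: Grothendieck--Serre duality on the fiber yields $R^1\pi_{M*}\CO_M \simeq \omega_B$, and one verifies directly that both sides of (\ref{P=F4}) for $k = 0$ reduce to $\omega_B^\vee[1] \oplus \CO_B$.
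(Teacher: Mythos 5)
Your proposal is correct and follows essentially the same route as the paper: both proofs hinge on exactly the same ingredients, namely Proposition \ref{prop3.2} (derived base change along $0_B$), Saito's formula $R\pi_{M*}\mathrm{gr}_{-k}\mathrm{DR}(\BQ_M[2n])\simeq\bigoplus_i\mathrm{gr}_{-k}\mathrm{DR}(P_i)[n-i]$ together with $\mathrm{gr}^F_{-k}\mathrm{DR}(\BQ_M[2n])=\Omega_M^k[2n-k]$, the Koszul resolution of $\CO_B\subset T^*B$, and the hypothesis (\ref{P=F2}). All your shift bookkeeping checks out: from $R\pi_{M*}\Omega_M^k\simeq\bigoplus_j\mathrm{gr}_{-k}\mathrm{DR}(P_j)[k-n-j]$, tensoring with $\omega_B^\vee[n]$ and substituting $\mathrm{gr}_{-k}\mathrm{DR}(P_j)\simeq\mathrm{gr}_{-j}\mathrm{DR}(P_k)[j-k]$ does indeed collapse to $\bigoplus_j\mathrm{gr}_{-j}\mathrm{DR}(P_k)\otimes\omega_B^\vee$, matching the Koszul computation of $L0_B^*\mathrm{gr}(P_k)$. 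The only cosmetic difference from the paper is that you prove (\ref{P=F4}) directly, whereas the paper first establishes Conjecture \ref{conj3.4} (which involves the Verdier dual $(L0_B^*\mathrm{gr}(P_k))^\vee$ and an explicit application of Grothendieck--Verdier duality plus the $\sigma$-induced self-duality of $\Omega_M^k$), and then deduces Proposition \ref{prop0.6} via Remark \ref{remark}(ii); your arrangement absorbs that dualization into the single invocation of Proposition \ref{prop3.2}, which is a mild streamlining, not a different argument. Your sanity check on the smooth elliptic surface with section is also consistent with $\phi_{\mathrm{FM}}(\CO_M)\simeq 0_{B*}\CO_B$, $\mathrm{gr}(P_0)\simeq 0_{B*}\CO_B$, and the derived self-intersection formula (\ref{eq:self}).
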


In fact as we will see in its proof, (\ref{P=F3}) is equivalent to (\ref{P=F}). Hence Conjecture \ref{main_conj} can be viewed as extending the perverse--Hodge symmetry from the $0$-section $B$ to a neighborhood~$\hat{B}$. We note that even in the case where $\pi_M: M \to B$ is smooth, in general the sheaves~$\phi_{\mathrm{FM}}(\Omega_M^k)$ and $\mathrm{gr}(P_k)$ are not scheme-theoretically supported on the $0$-sections. When the integrable system has singular fibers, the formal neighborhood $\hat{B}$ contains much richer geometry than the $0$-section $B$.  
One advantage of Conjecture \ref{main_conj} compared to (\ref{P=F2}) and (\ref{P=F4}) is that involves matching Cohen--Macaulay sheaves, instead of matching complexes in the derived category.
It would be interesting to find an enhancement of Conjecture \ref{main_conj} that would imply the full 
statement of (\ref{P=F2}).

\subsubsection{The geometric Langlands correspondence}
A more speculative direction for studying the Fourier--Mukai transform of the sheaf of K\"ahler differentials comes via the geometric Langlands correspondence.  

For a nonsingular curve $C$ and a reductive group $G$, the geometric Langlands correspondence predicts an equivalence of categories
\begin{equation}\label{GLC}
\mathrm{GLC}: \textup{``}D{\mathrm{QCoh}}\textup{''}(\CM_{\mathrm{dR},G}) \xrightarrow{\simeq} D(\mathrm{Bun}_{^LG}, \CD)
\end{equation}
respecting various extra structures. Here $\CM_{\mathrm{dR},G}$ stands for the de Rham moduli stack parameterizing $G$-local systems on $C$, $\mathrm{Bun}_{^LG}$ is the moduli stack of principal $^LG$-bundles on $C$, $D(-, \CD)$ is the derived category of $D$-modules, and the quotation marks means that ``the category of quasi-coherent sheaves'' needs to be modified; we refer to \cite{AG} for the precise statement. 

The ``classical limit'' of (\ref{GLC}) is expected to be induced by a Fourier--Mukai transform
\[
\phi_{\mathrm{FM}}: D\mathrm{QCoh}(\CM_{\mathrm{Higgs},G}) \xrightarrow{\simeq} D\mathrm{QCoh}(\CM_{\mathrm{Higgs},{^LG}})
\]
for the dual Lagrangian fibrations given by the Hitchin systems associated with $G$ and its Langlands dual $^LG$.\footnote{The Fourier--Mukai $\phi_{\mathrm{FM}}$ here should differ from the Fourier--Mukai we use in this paper by a shift, as we expect that a skyscraper sheaf is sent to a Hecke eigensheaf which is quantized by a holonomic $D$-module. However, for notational convenience we ignore the shift for the discussion here.}

In our setting, if we consider the (derived) exterior power
\[
\Omega^k: = \wedge^k \BL 
\]
of the cotangent complex $\BL$ on $\CM_{\mathrm{Higgs},G}$,
this object can be lifted to the left-hand side of~(\ref{GLC})). Therefore, we might expect to find an object $\CQ_k$ in $D(\mathrm{Bun}_{^LG}, \CD)$ which quantizes $\phi_{\mathrm{FM}}(\Omega^k)$. 

\begin{question}
Can we describe the object $\CQ_k \in D(\mathrm{Bun}_{^LG}, \CD)$ quantizing $\phi_{\mathrm{FM}}(\Omega^k)$?
\end{question}

Our proposal provides a strange, partial answer to the question. To avoid stacky issues, we focus on the open subset $A^\circ \subset A$ of the Hitchin base over which both Hitchin fibrations 
\[
h_G: \CM_{\mathrm{Higgs}, G} \to A, \quad h_{^LG}: \CM_{\mathrm{Higgs}, {^LG}} \to A
\]
are proper with integral fibers. Our main conjecture predicts that, the object $\phi_{\mathrm{FM}}(\Omega^k)$ is quantized by a single (shifted) $D$-module $P_k$ obtained from the decomposition theorem associated with \[
h_{^LG}|_{A^\circ}: \CM_{\mathrm{Higgs},^LG}|_{A^\circ} \to A^\circ
\]
if we restrict to a formal neighborhood of the Kostant section.\footnote{According to the Ng\^o support theorem \cite{Ngo}, the decomposition theorem associated with the Hitchin system~$h_G$ coincides with that associated with $h_{^LG}$, since they are both governed by intermediate extensions of the local systems given by the smooth fibers.} In particular this suggests that $\CQ_k$ is a (shifted) holonomic $D$-module.

\subsubsection{Hyper-K\"ahler mirror symmetry}\label{HMS} 

The transform $\phi_{\mathrm{FM}}(\Omega_M^k)$ can also be studied from the perspective of hyper-K\"ahler mirror symmetry and $S$-duality \cite{KW}.

When $M$ is hyper-K\"ahler, homological mirror symmetry is expected to interchange two types of \emph{branes} on $M$ and its mirror $M^\vee$ respectively; it predicts that ``BBB'' branes on~$M$ is sent to ``BAA'' branes on $M^\vee$ via the Fourier--Mukai transform. Roughly, ``A'' stands for the Lagrangian condition and ``B'' stands for the holomorphic condition; ``BBB'' and ``BAA'' indicate the corresponding conditions with respect to the three complex structures. In particular, this suggests that 
\[
\phi_{\mathrm{FM}}( \textup{hyper-holomorphic bundle} )
\]
should be a sheaf supported on a complex Lagrangian. The main theme of this paper concerns a particular class of hyper-holomorphic bundles $\Omega_M^k$. Our main proposal gives a local description of the mirror to $\Omega_M^k$ in terms of the decomposition theorem and Hodge modules.

We note that for Hitchin systems, recently Hausel and Hitchin \cite{HH, H_ICM} studied another interesting class of hyper-holomorphic bundles given by the universal family of the Higgs bundles, whose mirrors are supported on the upward flows of certain very stable Higgs bundles.


\subsection{Acknowledgement}
We would like to thank Conan Naichung Leung, Ivan Losev, Tony Pantev, and Christian Schnell for their interest and for very helpful discussions. J.S. was supported by the NSF grant DMS-2134315. Q.Y. was supported by the NSFC grants 11831013 and 11890661.

\section{Symmetries of integrable systems}

\subsection{Overview}
In this section, we introduce two groups $A$ and $P$ associated with an integrable system $\pi_M: M\to B$ which encode its symmetries. The group scheme $A$ acts on $M$ directly, and the group space $P$ induces the Fourier--Mukai transform. They are connected by a relative polarization (\ref{uni}), which further identifies the formal neighborhood of the $0$-sections $B \subset P$ and $B \subset T^*B$.

\subsection{Integrable systems}
We recall from Section \ref{Sec0.2} that an integrable system $\pi_M: M \to B$ is a Lagrangian fibration associated with a nonsingular holomorphic symplectic variety~$(M,\sigma)$ satisfying that the base $B$ is nonsingular, the map $\pi_M$ is projective and surjective, the geometric fibers are integral, and $\pi_M$ admits a section $s_B: B\to M$.

\begin{example}\label{ex1.1}
Compactified Jacobian fibrations associated with integral curves in holomorphic symplectic surfaces provide a large class of interesting examples of integrable systems. 

If we take $(S,L)$ to be a polarized abelian or $K3$ surface, we consider the open subset $U \subset |L|$ of the linear system parameterizing integral curves with $\CC \to U$ the universal family. Then the compactified Jacobian fibration
\begin{equation}\label{Jacobian}
\pi_J: \overline{J}_\CC \to U
\end{equation}
parameterizing rank 1 torsion-free sheaves $\CW$ on geometric fibers $\CC_u$ of $\CC \to U$ with Euler characteristic 
\[
\chi(\CC_u, \CW) = \chi (\CC_u, \CO_{\CC_u}), \quad  u \in U
\]
gives rise to an integrable system. The section $s_B: U \to \overline{J}_\CC$ is given by the trivial line bundles:
\[
u \in U \mapsto [\CO_{\CC_u}] \in \overline{J}_\CC.
\]
The symplectic form of $\overline{J}_\CC$ is the restriction of the symplectic form on the regular locus of the moduli of semistable 1-dimensional coherent sheave on $S$ supported on $L$. Such a construction is referred to as the Beauville--Mukai system \cite{Be}.

The same construction replacing $S$ by a the non-compact symplectic surface $T^*C$ given by the total cotangent bundle of a curve $C$ of genus $\geq 2$ then recovers examples of Hitchin systems. \qed
\end{example}

Let $M^{\mathrm{sm}} \subset M$ be the smooth locus of $\pi_M$, \emph{i.e.}, the maximal open subscheme of $M$ such that the restriction of $\pi_M$ is smooth. Then the section
\[
s_B(B) \simeq B \subset M
\]
lies in $M^{\mathrm{sm}}$.

When the integrable system is given by the compactified Jacobian fibration (\ref{Jacobian}) as in Example \ref{ex1.1}, the smooth locus $\overline{J}_C^{\mathrm{sm}}$ is exactly the relative degree 0 Picard scheme  
\[
\mathrm{Pic}^0(\CC/B) \to B
\]
parameterizing degree 0 line bundles on geometric fibers; it is a group scheme over $B$ which acts naturally on $\overline{J}_C$.

This is generalized to any integrable system by Arinkin--Fedorov \cite{AF}.

\begin{prop}[{\cite[Proposition 8.7]{AF}}]\label{prop1.2}
 The smooth locus of an integrable system is a group scheme $A:= M^{\mathrm{sm}}$ over $B$. It admits a natural action on $M$
 \[
 \mu: A \times_B M \to M
 \]
 preserving the smooth locus.
\end{prop}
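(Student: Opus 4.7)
The plan is to construct both the group structure on $A := M^{\mathrm{sm}}$ and its action on $M$ simultaneously, via Hamiltonian flows of functions pulled back from $B$ — this is the algebro-geometric incarnation of the classical Liouville--Arnold argument.

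For any local function $f \in \CO_B$, the Hamiltonian vector field $X_{\pi_M^* f}$ on $M$ is tangent to the fibers of $\pi_M$ (since $\{\pi_M^* f, \pi_M^* g\} = 0$ for every local $g$ on $B$, by the Lagrangian hypothesis), and any two such vector fields commute because pullbacks Poisson-commute. These infinitesimal symmetries assemble, via $\sigma$, into an $\CO_M$-linear map $\pi_M^* \Omega^1_B \to T_{M/B}$ which is an isomorphism on the smooth locus $A$ (by Lagrangianness together with a rank count). Since $\pi_M$ is proper, the corresponding flows exist for all time and integrate to a morphism
\[
\tilde{\mu}\colon T^*B \times_B M \to M
\]
that is a fiberwise action of the vector group scheme $T^*B \to B$ on $M$, preserving fibers, the symplectic form, and the open subset $A \subset M$.

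On $A$, the action $\tilde\mu|_A$ is fiberwise transitive (by surjectivity of $\pi_M^* \Omega^1_B \to T_{A/B}$); together with the section $s_B$, it identifies each smooth fiber $A_b$ with $T^*_b B / \Lambda_b$ for some cocompact lattice $\Lambda_b \subset T^*_bB$. I define $\Lambda \subset T^*B$ as the scheme-theoretic preimage of $s_B(B)$ under the orbit morphism $v \mapsto \tilde\mu(v, s_B)$. Then $\Lambda$ is a closed subgroup scheme of $T^*B$, the orbit morphism is an étale surjection onto $A$ (étale by the infinitesimal isomorphism $\pi_M^*\Omega^1_B \simeq T_{A/B}$), and it descends to an isomorphism of group spaces $T^*B/\Lambda \xrightarrow{\sim} A$ over $B$. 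This equips $A$ with the structure of a commutative group space over $B$ with identity $s_B$.

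To produce the action $\mu\colon A \times_B M \to M$, I descend $\tilde\mu$ by verifying that $\Lambda$ acts trivially on all of $M$. By definition $\Lambda$ fixes $s_B(B)$ pointwise, and fiberwise transitivity on $A$ forces $\Lambda$ to fix the dense open subset $A \subset M$ pointwise. Since $M$ is nonsingular and each $\tilde\mu(\lambda,-)$ is a biregular automorphism of $M$, fixing a dense open set implies the whole automorphism is the identity; hence $\tilde\mu$ descends to $\mu$, which preserves $A$ by construction. The main obstacle in this plan is the algebraicity and surjectivity step: realizing $T^*B/\Lambda$ as an algebraic (not merely analytic) group space over $B$, and verifying that the orbit morphism is surjective onto the \emph{entire} scheme $A$ — and not only onto an open sublocus. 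Defining $\Lambda$ as a scheme-theoretic preimage sidesteps the quotient issue, but surjectivity of $\tilde\mu(-, s_B)\colon T^*B \to A$ requires the full force of properness of $\pi_M$ and integrality of the fibers, and is where the bulk of the technical work lies.
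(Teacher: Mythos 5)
Your plan is the algebro-geometric rendering of the classical Liouville--Arnold argument, and the geometric intuition is exactly right, but it founders precisely at the step you flag: integrating the Hamiltonian flows of $\pi_M^*f$ does \emph{not} produce an algebraic morphism $\tilde\mu\colon T^*B\times_B M\to M$. Already in the simplest example, where $\pi_M\colon M\to B$ is an everywhere-smooth elliptic fibration with section, your $\tilde\mu$ restricted over a point $b\in B$ would include a nonconstant morphism $\BA^1\to E_b$ from the affine line to an elliptic curve (the orbit map $v\mapsto\tilde\mu(v,s_B(b))$), and no such algebraic morphism exists. The flow exists only in the complex-analytic category. Correspondingly, the ``lattice'' $\Lambda\subset T^*B$ that you define as the preimage of $s_B(B)$ is a closed \emph{analytic} subset — a bundle of rank-$2n$ lattices inside a rank-$n$ vector bundle — and is certainly not a closed subgroup scheme of $T^*B$; there is no algebraic object $T^*B/\Lambda$ to compare $A$ with, and ``\'etale'' is not a meaningful adjective for the orbit map. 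These are not loose ends to tidy up: the algebraicity of the group structure and of $\mu$ is the entire content of the proposition, and the analytic flow picture offers no mechanism for it (GAGA does not apply since $A$, $B$, and $A\times_B A$ are non-proper).

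The paper does not attempt a construction along these lines. It cites Arinkin--Fedorov, whose argument is algebraic from the start: they use projectivity of $\pi_M$ to get representability of the automorphism group scheme $\mathrm{Aut}_B(M)$, carve out the relevant commutative subgroup scheme $A$ acting by fiberwise automorphisms, show that $M^{\mathrm{sm}}$ is an $A$-torsor (here the Lagrangian/symplectic hypotheses and integrality of fibers enter), and then identify the torsor with $A$ via the section $s_B$ as the paper records. The representability of the automorphism functor is the algebraic input that replaces the analytic flow, and it is the ingredient your proposal is missing.
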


\begin{proof}
For an integrable system $\pi_M: M\to B$, Arinkin and Fedorov constructed in \cite[Section~8]{AF} a $B$-group scheme $A$ from the automorphism group scheme $\mathrm{Aut}_B(X)$, so that:
\begin{enumerate}
    \item[(i)] $A$ acts naturally on $M$ preserving the smooth locus, and
    \item[(ii)] $M^{\mathrm{sm}}$ is an $A$-torsor.
\end{enumerate}
In particular, when $\pi_M$ admits a section $s_B$, the $A$-torsor $M^{\mathrm{sm}}$ is canonically identified with~$A$ via
\[
\mu\circ(\mathrm{id}_A \times s_B): A \rightarrow A\times_B M \to M, \quad \mathrm{Im}( \mu\circ(\mathrm{id}_A \times s_B) ) = M^{\mathrm{sm}} \subset M; 
\]
see the proof of \cite[Proposition 8.7]{AF}. 
\end{proof}

We conclude this section by recalling Ng\^o's $\delta$-inequality \cite{Ngo, AF}. For every closed point $b \in B$, the group $A_b$ admits the Chevalley decomposition
\[
1 \to R_b \to A_b \to H_b \to 1 
\]
with $R_b \subset A_b$ the maximal affine subgroup and $H_b$ an abelian variety. This defines a semicontinuous function
\[
\delta: B \to \BN, \quad \delta(b): = \dim R_b
\]
which calculates the dimension of the affine part. 

The following proposition is a consequence of the existence of a symplectic form.

\begin{prop}[Ng\^o; see {\cite[Proposition 8.9]{AF}}]\label{prop1.3}
For an integrable system $\pi_M: M \to B$ with~$A$ the associated group scheme as in Proposition \ref{prop1.2}, we have
\[
\mathrm{codim}_B\{b\in B\,|\, \delta(A_b) \geq i \} \geq i.
\]
\end{prop}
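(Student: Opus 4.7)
The plan is to extract the inequality from the existence of the symplectic form on $M$ by identifying the Lie algebra of the fiberwise group $A/B$ with the cotangent bundle $T^*B$, and then showing that the Chevalley affine subalgebra along the stratum $B_i := \{b \in B : \delta(A_b) \geq i\}$ is forced by symplectic orthogonality to live inside the conormal bundle of $B_i$. Once this is established, a pointwise dimension count
\[
\delta(b) = \dim \mathfrak{r}_b \leq \dim N^*_{B_i/B, b} = \mathrm{codim}_B B_i
\]
at a generic point $b \in B_i$ (where $\delta$ attains its value $\geq i$ on $B_i$) yields the conclusion.

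First I would set up the identification $\mathrm{Lie}(A/B) \simeq T^*B$. Since $s_B(B) \subset M$ is Lagrangian (the fibers of $\pi_M$ are Lagrangian of complementary dimension to $B$), the section identifies $T_{s_B(b)}M$ with $T_bB \oplus \mathrm{Lie}(A_b)$ as a direct sum of Lagrangians, and the symplectic form $\sigma$ produces a perfect pairing $T_bB \otimes \mathrm{Lie}(A_b) \to \mathbb{C}$, giving a canonical isomorphism of vector bundles on $B$
\[
\mathrm{Lie}(A/B) \simeq T^*B.
\]
The Chevalley decomposition $1 \to R_b \to A_b \to H_b \to 1$ provides a subspace $\mathfrak{r}_b = \mathrm{Lie}(R_b) \subset \mathrm{Lie}(A_b) \simeq T_b^*B$ of dimension exactly $\delta(b)$.

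The heart of the argument is the claim that for $b \in B_i$, the subspace $\mathfrak{r}_b$ pairs trivially with $T_b B_i \subset T_b B$, i.e., $\mathfrak{r}_b \subset N^*_{B_i/B, b}$. To see this, I would use the following deformation-theoretic idea, following Ng\^o: a tangent vector $v \in T_b B_i$ lifts to a first-order deformation of the fiber $M_b$ for which the affine part of the group scheme does not drop in dimension. Using an auxiliary $\pi_M$-relatively ample bundle $\Theta$ to produce a polarization of the abelian quotient $H_b$ (combined with the symplectic form on the total space), one obtains an alternating form on $\mathrm{Lie}(A_b)$ whose radical exactly recovers the affine Lie subalgebra $\mathfrak{r}_b$; the fact that $v$ preserves the affine part to first order then translates, via the pairing of Step~1, into the statement that the image of $v$ under $\sigma$ annihilates $\mathfrak{r}_b$. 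Combining this with $\dim \mathfrak{r}_b = \delta(b) \geq i$ on $B_i$ yields the codimension estimate.

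The main obstacle is making this last step rigorous in families, in particular controlling the variation of the Chevalley filtration and checking that the symplectic alternating form descends correctly to the Lie algebra of $A_b$. This is exactly the content of Ng\^o's original argument \cite{Ngo} and its adaptation to integrable systems in \cite[Proposition 8.9]{AF}, to which we defer for the technical details. The upshot is that the inequality $\mathrm{codim}_B B_i \geq i$ is a purely symplectic phenomenon, independent of the specific global structure of $\pi_M$, and is built into the definition of an integrable system via the existence of $\sigma$.
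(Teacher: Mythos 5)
The paper does not prove Proposition~\ref{prop1.3}: it is stated as Ng\^o's theorem with a citation to \cite[Proposition~8.9]{AF}, and no argument is given. Your proposal is therefore a reconstruction of the argument in the references rather than an alternative to a paper proof. The two ingredients you isolate are the right ones: the symplectic identification $\mathrm{Lie}(A/B) \simeq T^*B$ (which is exactly the mechanism the paper uses later, in the proof of Proposition~\ref{prop1.4}), and the claim that the affine Lie subalgebra $\mathfrak{r}_b$ is contained in the conormal to the $\delta$-stratum, followed by a dimension count. Your middle step --- extracting $\mathfrak{r}_b \subset N^*_{B_i/B,b}$ from the symmetry of the period-map derivative --- is exactly where the real work lies, and your deferral to \cite{Ngo} and \cite[Proposition~8.9]{AF} for that step mirrors what the paper does.

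One inaccuracy worth flagging: you assert that ``the section identifies $T_{s_B(b)}M$ with $T_bB \oplus \mathrm{Lie}(A_b)$ as a direct sum of Lagrangians.'' A section of a Lagrangian fibration is not automatically a Lagrangian subvariety (for $T^*B$ with the standard symplectic form, the section given by a $1$-form $\alpha$ is Lagrangian if and only if $d\alpha = 0$), and the hypotheses in this paper, following Arinkin--Fedorov, do not impose that $s_B(B)$ be Lagrangian. Fortunately your argument does not need it: since the fiber is Lagrangian, $\mathrm{Lie}(A_b)$ is isotropic of half dimension in $T_{s_B(b)}M$, so $\sigma$ induces a perfect pairing of $\mathrm{Lie}(A_b)$ with the quotient $T_{s_B(b)}M / \mathrm{Lie}(A_b) \simeq T_bB$, and this alone gives $\mathrm{Lie}(A_b) \simeq T_b^*B$. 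You should drop the Lagrangian-section claim to keep the argument correct as stated.
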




\subsection{Picard}
We introduce here another group over $B$ closely related to $A$, the Picard space, which will appear in the Fourier--Mukai transform.

The (relative) Picard stack represents the functor
\[
\mathcal{P}ic(M/B): B\textrm{-schemes} \to \mathrm{groupoids}
\]
sending a $B$-scheme $S$ to the groupoid of line bundles over $M \times_BS$. The section $s_B: B \to M$ trivializes the $\BG_m$-gerbe:
\[
\mathcal{P}ic(M/B) = \mathrm{Pic}(M/B) \times B\BG_m.
\]
Here $\mathrm{Pic}(M/B)$ is a $B$-group algebraic space whose restriction to a geometric point $b \in B$ recovers the Picard scheme $\mathrm{Pic}(M_s)$. In particular, the algebraic space $\mathrm{Pic}(M/B)$ carries a universal line bundle
\begin{equation}\label{uni}
\CL \rightarrow M\times_B \mathrm{Pic}(M/B).
\end{equation}
The identity component $P \subset \mathrm{Pic}(M/B)$ is also a group space over $B$, and we obtain a universal line bundle $\CL \to M\times_B P$ by the restriction of (\ref{uni}), which satisfies
\[
\CL|_{M\times_B 0_B(B)} \simeq \CO_M.
\]
By the choice of splitting, the universal bundle $\CL$ on $M\times_BP$ is normalized to satisfy the condition 
\[
\CL|_{s_B(B)\times_BP} \simeq \CO_P.
\]
We will always work with this normalized universal family $\CL$.

The two groups $A$ and $P$ over $B$ are closely related. If we choose a $\pi_M$-relatively ample line bundle 
\begin{equation}\label{choice}
\Theta \in \mathrm{Pic}(M),
\end{equation}
we obtain a Poincar\'e line bundle
\[
\CP_\Theta:= \mu^*\Theta \otimes \mathrm{pr}_M^* \Theta
\]
over $A\times_BM$, which further induces a morphism 
\[
\kappa_\Theta: A \rightarrow \mathrm{Pic}(M/B).
\]
By \cite[Corollary 7.7]{AF}, the morphism $\kappa_\Theta$ is \'etale. Since $A = M^{\mathrm{sm}} \subset M$ is irreducible, the image of $\kappa_\Theta$ has to lie in the identity component $P \subset \mathrm{Pic}(M/B)$. We thus obtain the \'etale homomorphism of $B$-groups
\[
\kappa_\Theta: A \to P.
\]

The following proposition serves as the foundation for Conjecture \ref{main_conj} which identifies the formal neighborhoods of $B$ inside $P$ and $T^*B$.

\begin{prop}\label{prop1.4}
The $\pi_M$-relatively ample line bundle $\Theta$ in \eqref{choice} induce an isomorphism
\[
\hat{\kappa}_\Theta:  \hat{B}_P \xrightarrow{\simeq} \hat{B}_{T^*B}.
\]
Here $\hat{B}_{(-)}$ stands for the formal neighborhood of $B$ inside $(-)$.
\end{prop}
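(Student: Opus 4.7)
The plan is to build $\hat{\kappa}_\Theta$ as a composition of three canonical isomorphisms of formal $B$-schemes,
\[
\hat{B}_P \;\xleftarrow{\sim}\; \hat{B}_A \;\xleftarrow{\sim}\; \hat{B}_{\mathrm{Lie}(A/B)} \;\xrightarrow{\sim}\; \hat{B}_{T^*B},
\]
where $\hat{B}_A$ is the formal neighborhood of the identity section $s_B(B) \subset A$, and $\hat{B}_{\mathrm{Lie}(A/B)}$ is the formal neighborhood of the zero section in the total space of the Lie algebra bundle $\mathrm{Lie}(A/B) = s_B^* T_{A/B}$. The three arrows will come from (i) the étaleness of $\kappa_\Theta$, (ii) the formal exponential for the smooth commutative $B$-group $A$, and (iii) the symplectic form $\sigma$ on $M$. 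Only step (i) uses the polarization $\Theta$, so this captures the entire $\Theta$-dependence of $\hat{\kappa}_\Theta$.

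Step (i) is immediate: by \cite[Cor.~7.7]{AF} the homomorphism $\kappa_\Theta: A \to P$ is étale, and it sends the identity section of $A$ identically onto $0_B(B) \subset P$, so it induces an isomorphism of the formal neighborhoods of these sections. For step (ii), the $B$-group scheme $A$ is smooth (by construction $A = M^{\mathrm{sm}}$) and commutative (each geometric fiber, being the smooth locus of an integral Lagrangian fiber, is a commutative algebraic group, and $A$ is reduced). Over $\BC$, the formal completion of any smooth commutative $B$-group along its identity section is canonically isomorphic, via the formal exponential, to the formal completion of its Lie algebra bundle along the zero section; this supplies the middle arrow. For step (iii), the inclusion $s_B(B) \subset A = M^{\mathrm{sm}}$ yields $\mathrm{Lie}(A/B) = s_B^* T_{M/B}$. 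The Lagrangian condition on $\pi_M$ forces $T_{M/B} \subset T_M$ to coincide with its own orthogonal with respect to $\sigma: T_M \xrightarrow{\sim} \Omega^1_M$; comparing the relative tangent sequence
\[
0 \to T_{M/B} \to T_M \to \pi_M^* T_B \to 0
\]
with its $\sigma$-dual yields a canonical isomorphism $T_{M/B} \cong \pi_M^* \Omega^1_B$, and pulling back along $s_B$ identifies $\mathrm{Lie}(A/B) \cong \Omega^1_B$ as vector bundles on $B$, hence the required isomorphism of formal neighborhoods of the zero sections.

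The main technical point is the relative exponential in step (ii). In characteristic zero this is standard: every smooth commutative formal group over a $\BQ$-algebra is canonically isomorphic to its Lie algebra via $\exp$, and the construction runs universally in families over $B$. One should verify that this isomorphism is natural enough that the composite $\hat{\kappa}_\Theta$ really only depends on $\Theta$ through the first arrow, but once this is in place the remaining steps are essentially formal manipulations.
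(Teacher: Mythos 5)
Your proposal is correct and uses exactly the same three ingredients as the paper's proof: the formal exponential for a smooth commutative group over $B$, the \'etaleness of $\kappa_\Theta$ from \cite[Cor.~7.7]{AF}, and the Lagrangian identification $T_{M/B}\cong\pi_M^*\Omega^1_B$ restricted to the section. The only cosmetic difference is the order of composition — the paper applies the formal logarithm to $P$ and then compares normal bundles $N_{B/P}\simeq N_{B/A}\simeq T^*B$, whereas you first pass to $\hat B_A$ via \'etaleness and then exponentiate the group scheme $A$ — but the substance is the same.
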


\begin{proof}
Since the formal neighborhood of the $0$-section in a commutative group scheme splits, \emph{i.e.}, it is isomorphic by the logarithm to the formal completion of the $0$-section in its normal bundle, we have
\[
\hat{B}_{P} = \hat{B}_{N_{B/P}}.
\]
So it suffices to show that $\kappa_\Theta$ (together with $\sigma$) identifies the normal bundle $N_{B/P}$ and the cotangent bundle $T^*B$ over $B$. This is achieved in two steps. First, the \'etale $B$-morphism $\kappa_\Theta$ induces an isomorphism $N_{B/P} \simeq N_{B/A}$. Then the restriction of the symplectic form $\sigma$ on $B$~to 
\[
A = M^{\mathrm{sm}} \subset M
\]
induces an isomorphism $N_{B/A} \simeq T^*B$. The composition of these two isomorphism gives the desired one.
\end{proof}

Now we may state our main conjecture precisely using Proposition \ref{prop1.4}.

\begin{conj}\label{main_conj2}
For an integrable system $\pi_M: (M, \sigma) \to B$, let $\Theta$ be any $\pi_M$-relatively ample line bundle (\ref{choice}). Under the isomorphism $\hat{\kappa}_\Theta$ between the formal neighborhoods of $B$ in~$P$ and $T^*B$, there exists an isomorphism
\begin{equation*}
\phi_{\mathrm{FM}}(\Omega_M^k)|_{\hat{B}_P} \simeq \mathrm{gr}(P_k)|_{\hat{B}_{T^*B}} \in \mathrm{Coh}(\hat{B}).
\end{equation*}
\end{conj}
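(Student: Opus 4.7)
By Theorems~\ref{gr(P)} and \ref{thm0.3}, together with \eqref{1223}, both sides of the claimed isomorphism are Cohen--Macaulay sheaves of pure dimension $n$ on $\hat{B}$ with reduced support contained in the same conical Lagrangian $\Lambda|_{\hat{B}}$. Since such CM sheaves admit no embedded components, they are determined by their generic behavior on each top-dimensional component of $\Lambda|_{\hat{B}}$ together with extension data along intersections; the plan is to construct a natural comparison morphism and then verify that these two pieces of data match.

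\textbf{Constructing the map.} The sheaf $\mathrm{gr}(P_k)$ is intrinsically the Rees sheaf of the Hodge-filtered $\CD_B$-module underlying $P_k$. I would first try to endow $\phi_{\mathrm{FM}}(\Omega_M^k)|_{\hat{B}_P}$ with a matching Rees-type structure, using the two rigidifying pieces of data supplied by Section~1: the $A$-action on $(M,\sigma)$ from Proposition~\ref{prop1.2} which infinitesimalizes $M$ along $s_B(B)$, and the normalization $\CL|_{s_B(B)\times_B P}\simeq \CO_P$. Under the identification $\hat{\kappa}_\Theta$ of Proposition~\ref{prop1.4}, these should identify the zeroth-order restriction of $\phi_{\mathrm{FM}}(\Omega_M^k)$ to $B$ with $\bigoplus_p R^{k-p}\pi_{M*}\Omega_{M/B}^p$ --- matching Proposition~\ref{prop0.6} on the nose --- and its first-order deformation across $\hat{B}_P$ with the Gauss--Manin connection. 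In the smooth case this already settles Theorem~\ref{thm0.4}, since the right-hand side then becomes precisely the Rees construction of the Hodge bundle. More structurally, one would hope to produce the comparison morphism globally as a natural transformation of functors on $D^b\mathrm{Coh}(M)$ evaluated at $\Omega_M^k$, perhaps by pairing Arinkin--Fedorov's Fourier--Mukai kernel $\CL^\vee$ against a ``Hodge-filtered'' enhancement of $\CO_M$.

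\textbf{Propagation and main obstacle.} To extend past the zero section, I would proceed stratum by stratum along $\Lambda = \bigcup_{i,Z_i} \overline{T^*_{Z_i}B}$. On each codimension-$i$ conormal component with $i \geq 1$, Saito's theory expresses the restriction of $\mathrm{gr}(P_k)$ via nearby/vanishing cycle Hodge modules attached to $Z_i$, while the Fourier--Mukai side is controlled by the deformation theory of line bundles on the singular fibers above $Z_i$ as encoded by $P$. The match between these two pure pieces should follow from the local invariant cycle theorem combined with Ng\^o's $\delta$-inequality (Proposition~\ref{prop1.3}), which controls the rank of the affine contribution. The hardest step --- and the one that forces case-by-case analysis in Theorem~\ref{thm0.5} and Section~\ref{sec6.7} --- is matching the \emph{extension classes} gluing successive strata: Cohen--Macaulayness constrains these $\mathrm{Ext}^1$-classes but does not pin them down, and they are governed by subtle data about the singular fibers (nearby cycles, Deligne's canonical extension, monodromy filtrations). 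A uniform proof seems to require either a functorial, global construction of the comparison morphism --- e.g.\ via a ``quantized'' Fourier--Mukai kernel refining \cite{AF} --- or a local model theorem reducing each fiber degeneration of an integrable system to a standard normal form in the spirit of Ng\^o's support theorem.
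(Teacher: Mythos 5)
The labeled statement is Conjecture~\ref{main_conj2}, which the paper itself does \emph{not} prove; the paper establishes it only in special cases (Theorem~\ref{thm0.4} for smooth fibrations and Theorem~\ref{thm0.5} plus Section~\ref{sec6.7} for two-dimensional nodal/cuspidal families), together with consistency checks (Theorem~\ref{thm0.3}, Proposition~\ref{prop0.6}). Your submission is likewise not a proof --- you say as much in the final paragraph --- so the most I can do is compare your strategy outline against the paper's actual strategy in the proved cases.

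On that footing, your outline is broadly aligned. You correctly frame the problem as matching two Cohen--Macaulay sheaves of pure dimension $n$ on the same conical Lagrangian (Theorems~\ref{gr(P)}, \ref{thm0.3}, Corollary~\ref{cor2.4}), correctly identify the restriction to the $0$-section as giving Proposition~\ref{prop0.6}, and correctly isolate the real difficulty: after pinning down the individual Hodge/graded pieces, the nontrivial content is matching the $\mathrm{Ext}^1$-classes that glue successive strata, which is precisely what Sections~\ref{Sec5.3}, \ref{Sec6.4}--\ref{Sec6.5} actually do via the Gauss--Manin connection and Deligne's canonical extension.

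Two concrete inaccuracies are worth flagging. First, your claim that in the smooth case ``this already settles Theorem~\ref{thm0.4}, since the right-hand side then becomes precisely the Rees construction of the Hodge bundle'' elides the actual work: the paper's smooth-case proof needs the Donagi--Markman cubic condition (Lemma~\ref{dm}) in an essential way --- the symmetry of the cubic form is exactly what lets one compare the extension class of the Fourier--Mukai side (computed via the Katz--Oda description of Gauss--Manin in degree $(1,n-1)$) with that of the Hodge-module side (degree $(1,0)$); the factor of $\Omega_B^1$ sits on opposite sides in the two computations. It also needs the $\mathfrak{S}_n$-equivariant Pontryagin-product bootstrapping of Lemmas~\ref{lem5.4}--\ref{lem5.5} to get from $k=1$ to general $k$. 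Neither is a formality. Second, the statement that the ``zeroth-order restriction'' equals $\bigoplus_p R^{k-p}\pi_{M*}\Omega_{M/B}^p$, ``matching Proposition~\ref{prop0.6} on the nose,'' conflates the underived restriction with the derived pullback $L0_B^*$: Proposition~\ref{prop0.6} gives $L0_B^*\phi_{\mathrm{FM}}(\Omega_M^k)\simeq L0_B^*\mathrm{gr}(P_k)$, which in the singular case is a complex with nontrivial higher Tor-terms, not the displayed direct sum.

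So: as a heuristic roadmap your outline mirrors the paper's strategy, but it does not constitute a proof of the conjecture, and the paper itself leaves the general case open. The honest gaps you flag (the need for a global/functorial construction of the comparison morphism, or for local normal forms of degenerations) are indeed the gaps --- but the claim that the smooth case falls out ``already'' is wrong, and a careful reviewer should push back on it.
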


\begin{rmk}
The statements of Conjecture \ref{main_conj2} come in pairs. On one hand, the symplectic form $\sigma$ induces an isomorphism $\Omega_M^k \simeq \Omega_M^{2n - k}$, where $2n = \dim M$. On the other hand, we have $\mathrm{gr}(P_k) \simeq \mathrm{gr}(P_{2n - k})$ by the relative Hard Lefschetz theorem.
\end{rmk}


\section{Singular supports}

\subsection{Overview}
In this section, we review the decomposition theorem of $\pi_M: M \to B$ and describe the reduced support
\[
\mathrm{supp}^{\mathrm{red}}( \mathrm{gr}(P_k) ) \subset T^*B
\]
following Migliorini--Shende \cite{disc} using the group scheme $A$. As this coincides with the singular support of the underlying perverse sheaf, in this section we can view $P_k$ merely as a perverse sheaf; in particular the filtration $F_\bullet P_k$ provided by Saito's theory does not play a role.

We conclude this section by introducing $\Lambda' \subset P$ which serves as the counter-part at the Fourier--Mukai side of the singular support associated with the decomposition theorem. A comparison between $\Lambda$ and $\Lambda'$ is deduced in Corollary \ref{cor2.4}.

\subsection{The decomposition theorem}
As before, we assume that 
\[
\dim M = 2 \dim B = 2n.
\]
Since a Lagrangian fibration has equidimesional fibers of dimension $n$ \cite{Equidim}, the decomposition theorem \cite{BBD} yields
\[
R\pi_{M*} \BQ_M[n] \simeq \bigoplus_{i=0}^{2n} P_k [-k], \quad P_k= {^\mathfrak{p}\CH}^k( R\pi_* \BQ_M[n] ).
\]
Here each perverse sheaf $P_k$ is semisimple.

We note that in the cases of Example \ref{ex1.1} Ng\^o's support theorem \cite{Ngo} implies that all $P_k$ have full support. In particular, each $P_k$ is given by the intermediate extension of the local systems obtained from the smooth locus of the integrable system $\pi_M: M \to B$. More refined information regarding the topology of $\pi_M$ is encoded in the \emph{singular supports} which are certain conical Lagrangians subvarieties of $T^*B$. We describe them in the following section for integrable systems.

\subsection{Singular supports}
For any proper morphism $f: X\to Y$ between nonsingular varieties, Migliorini and Shende \cite{disc} provide a concrete description of the singular supports
\[
\mathrm{SS}(Rf_*\BQ_X) \subset T^*Y
\]
associated with the decomposition theorem of $f$ using higher discriminants. 

Recall that for each $i \geq 0$, the higher discriminant $\Delta^i(f)$ is formed by points $y \in Y$ such that no $(i-1)$-dimensional subspace of the tangent space $T_yY$ at $y$ is transverse to $f$. We obtain a stratification
\[
Y= \Delta^0(f) \supset \Delta^1(f) \supset \Delta^2(f) \supset \dots,
\]
where each $\Delta^i(f) \subset Y$ is closed with
\begin{equation}\label{codim}
\mathrm{codim}_Y(\Delta^i(f)) \geq i.
\end{equation}
The closed subset $\Delta^i(f)$ generalizes the discriminant $\Delta^1(f)$ --- the locus where the fiber is singular. 

The following is the main result of \cite{disc}.

\begin{thm}[Migliorini--Shende]\label{MS2.1}
The singular support of $Rf_*\BQ_X$ is contained in the union of the conormal varieties to $i$-codimensional components of $\Delta^i(f)$ for all $i$.
\end{thm}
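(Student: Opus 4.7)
The approach combines the Kashiwara--Schapira microlocal bound for proper pushforward with the Lagrangian structure of the singular support of a constructible complex. As a starting point, for the correspondence $T^*X \xleftarrow{f_d} X \times_Y T^*Y \xrightarrow{f_\pi} T^*Y$ induced by $f$, the general estimate gives
\[
\mathrm{SS}(Rf_*\BQ_X) \subset f_\pi\bigl( f_d^{-1}(\mathrm{SS}(\BQ_X)) \bigr) = f_\pi\bigl(f_d^{-1}(0_X)\bigr) = C(f),
\]
where $C(f) := \{(y,\xi) \in T^*Y : \exists\, x \in f^{-1}(y) \text{ with } (df_x)^*\xi = 0\}$ is the ``characteristic cone'' of $f$. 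Since $Rf_*\BQ_X$ is constructible, $\mathrm{SS}(Rf_*\BQ_X)$ is moreover a closed conical Lagrangian subvariety, hence a finite union $\bigcup_V \overline{T^*_V Y}$ of closures of conormal bundles to irreducible subvarieties $V \subset Y$. It therefore suffices, for each such $V$ with $c := \mathrm{codim}_Y V$, to show that $V \subset \Delta^c(f)$.

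To do this, I would work at a generic smooth point $y \in V$ and exploit the fiber $T^*_V Y|_y = T_y V^\perp$, which has dimension $c$. The inclusion $\overline{T^*_V Y} \subset C(f)$ at $y$ says that every $\xi \in T_y V^\perp$ annihilates $\mathrm{Im}(df_{x_\xi})$ for some $x_\xi \in f^{-1}(y)$. Assuming toward a contradiction that $y \notin \Delta^c(f)$, there is a $(c-1)$-plane $W \subset T_y Y$ transverse to $f$ at every point of $f^{-1}(y)$, i.e., $W + \mathrm{Im}(df_x) = T_y Y$ for all such $x$. Dualizing, each slice $(\mathrm{Im}(df_x))^\perp \cap T_y V^\perp$ is a proper subspace of $T_y V^\perp$. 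I would then introduce the incidence correspondence
\[
I := \bigl\{ (x,\xi) \in f^{-1}(y) \times T_y V^\perp : \xi \perp \mathrm{Im}(df_x) \bigr\}
\]
with projections $p_1 : I \to f^{-1}(y)$ and $p_2 : I \to T_y V^\perp$. By the annihilation condition, $p_2$ is surjective; a dimension count on $I$, using the upper bound $\dim (\mathrm{Im}(df_x))^\perp \leq c-1$ forced by the transversality of $W$, then produces an $x \in f^{-1}(y)$ whose $\mathrm{Im}(df_x)$ meets $W^\perp$ nontrivially, i.e., $W + \mathrm{Im}(df_x) \neq T_y Y$, contradicting transversality. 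Combined with the closedness of $\Delta^c(f)$ and the bound $\mathrm{codim}_Y \Delta^c(f) \geq c$, this places $V$ inside a pure $c$-codimensional component $Z_c$ of $\Delta^c(f)$, so $\overline{T^*_V Y} \subset \overline{T^*_{Z_c} Y}$, as required.

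The main obstacle is this linear-geometric step converting the microlocal annihilation condition into an obstruction to transversality on $T_y Y$. Its clean execution likely requires passing to the characteristic-cycle formula $\mathrm{CC}(Rf_*\BQ_X) = f_!\,\mathrm{CC}(\BQ_X) = f_!\,[0_X]$ and analyzing the pushforward of Lagrangian cycles along the correspondence above, because the incidence variety $I$ can in general be high-dimensional over the projective fiber $f^{-1}(y)$ and purely set-theoretic arguments may not suffice. A parallel subtlety is that the transversality assumption has to be controlled uniformly over the compact fiber $f^{-1}(y)$, which calls for a semicontinuity argument to pass from the generic point of $V$ to arbitrary points of $V$ and to shrink open neighborhoods compatibly.
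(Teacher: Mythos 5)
The paper cites Theorem \ref{MS2.1} from Migliorini--Shende \cite{disc} and gives no proof of its own, so there is no internal proof to compare against. Your overall plan --- the Kashiwara--Schapira pushforward bound, the Lagrangian structure of $\mathrm{SS}(Rf_*\BQ_X)$ forcing a union of conormals $\overline{T^*_V Y}$, and a dimension argument at a generic smooth point of $V$ --- is essentially the strategy of Migliorini--Shende, and the set-theoretic inclusion $\mathrm{SS}(Rf_*\BQ_X) \subset C(f)$ really is all you need. Your instinct that one must escalate to characteristic cycles is a symptom of a misdirected final step, not a genuine obstruction.

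The gap is in the incidence-variety dimension count, which as stated does not close. You have $\dim I \geq c$ from surjectivity of $p_2$ and $\dim I \leq \dim f^{-1}(y) + (c - 1)$ from the fiber bound; these are perfectly compatible whenever $\dim f^{-1}(y) \geq 1$, which is the typical case for a proper morphism, so no contradiction emerges. More fundamentally, nothing in your analysis of $I$ ever uses the subspace $W^\perp$, which is where the contradiction with transversality must come from. (There is also a small slip: ``$\mathrm{Im}(df_x)$ meets $W^\perp$'' should read ``$(\mathrm{Im}\, df_x)^\perp$ meets $W^\perp$''.) The correct and much simpler conclusion: since $\dim (T_y V)^\perp = c$ and $\dim W^\perp = n - c + 1$, and $c + (n - c + 1) = n + 1 > n = \dim T^*_y Y$, the two subspaces meet in some nonzero $\xi$. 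Because $\xi \in (T_y V)^\perp \subset C(f)|_y$, there is $x \in f^{-1}(y)$ with $\xi \in (\mathrm{Im}\, df_x)^\perp$. But transversality of $W$ gives $(\mathrm{Im}\, df_x)^\perp \cap W^\perp = (\mathrm{Im}\, df_x + W)^\perp = 0$, a contradiction. This places the generic $y \in V$ in $\Delta^c(f)$; by closedness of $\Delta^c(f)$ all of $V$ lies there, and your concluding codimension argument then works as written. The semicontinuity concern in your final paragraph is likewise unnecessary --- a single generic point of $V$ together with closedness of $\Delta^c(f)$ suffices, and the incidence variety $I$ can be discarded entirely.
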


In the case of an integrable system $\pi_M: M \to B$, the higher discriminants $\Delta^i(\pi_M)$ are more concretely given by the $\delta$-stratification associated with the group scheme $A$ \cite[Proposition~4.3]{disc}.

\begin{prop}\label{prop2.2}
Let $A$ be the $B$-group scheme associated with an integrable system $\pi_M: M \to B$. We have
\[
\Delta^i(\pi_M) = \{b\in B\,|\,\delta(b) \geq i\}.
\]
\end{prop}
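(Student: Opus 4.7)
My plan is to prove this (essentially \cite[Proposition~4.3]{disc}) by translating the definition of $\Delta^i(\pi_M)$ into a dual linear-algebra condition on $T_b^*B$ and matching it with the $\delta$-stratification using the Lagrangian identification $T_b^*B \simeq \Fa_b := \mathrm{Lie}(A_b)$ supplied by Proposition~\ref{prop1.4}. Unwinding the definition, $b \notin \Delta^i(\pi_M)$ iff some $(i-1)$-dimensional subspace $V \subset T_bB$ satisfies $V + \mathrm{Im}(d\pi_{M,x}) = T_bB$ for every $x \in M_b$. Setting $W_x := \ker\bigl(d\pi_{M,x}^* : T_b^*B \to T_x^*M\bigr)$, this dualizes to the statement that the annihilator $V^\perp \subset T_b^*B$ (of dimension $n-i+1$) meets $\bigcup_{x \in M_b} W_x$ only at $0$. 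Hence $b \in \Delta^i(\pi_M)$ iff every $(n-i+1)$-dimensional subspace of $T_b^*B$ meets $\bigcup_{x \in M_b} W_x$ nontrivially.

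The heart of the argument is to identify $\bigcup_{x \in M_b} W_x$ with $\Fr_b := \mathrm{Lie}(R_b)$, viewed inside $T_b^*B$ via $T_b^*B \simeq \Fa_b$. For any $x \in M_b$, the composite $\sigma_x^{-1} \circ d\pi_{M,x}^* : T_b^*B \to T_xM$ coincides with the infinitesimal $A_b$-action $\alpha_x : \Fa_b \to T_xM$: the agreement holds on $s_B(B) \subset A$ by the very construction of the Lagrangian iso $N_{B/A} \simeq T^*B$ used in Proposition~\ref{prop1.4}, and extends to all $x \in M$ by $A$-equivariance, since the $A$-action on $M$ is generated by Hamiltonian flows of functions pulled back from $B$. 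Therefore $W_x \simeq \ker\alpha_x = \mathrm{Lie}(\mathrm{Stab}_{A_b}(x))$. By Proposition~\ref{prop1.2}, $A$ acts freely on $A = M^{\mathrm{sm}}$, so $W_x = 0$ for $x \in A_b$. For $x \in M_b \setminus A_b$, the Chevalley decomposition combined with the classical input that the abelian variety quotient $H_b = A_b/R_b$ acts freely on the proper fiber $M_b$ (a standard ingredient in Ng\^o's support theorem; cf.~\cite[Section~8]{AF}) forces $\mathrm{Stab}_{A_b}(x)^\circ \subset R_b$, so $W_x \subset \Fr_b$. Conversely, for a nonzero $\xi \in \Fr_b$ I would integrate $\xi$ to a one-parameter subgroup of the affine group $R_b$ and apply the Borel (resp.\ unipotent) fixed-point theorem on the proper integral variety $M_b$ to produce a point $x$ with $\xi \in W_x$. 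Together these inclusions yield $\bigcup_{x \in M_b} W_x = \Fr_b$.

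Finally, the conclusion is pure linear algebra: every $(n-i+1)$-dimensional subspace of $T_b^*B$ meets the fixed subspace $\Fr_b \subset T_b^*B$ nontrivially iff $\dim \Fr_b + (n-i+1) > n$, i.e., iff $\delta(b) \geq i$. Combined with the first two paragraphs this gives $\Delta^i(\pi_M) = \{b : \delta(b) \geq i\}$. The main obstacle is the equality $\bigcup_{x \in M_b} W_x = \Fr_b$ established in the central paragraph: the inclusion $\subset$ requires the freeness of the action of the abelian-variety quotient $H_b$ on $M_b$, and the inclusion $\supset$ requires the existence of a fixed point on $M_b$ for every one-parameter subgroup of $R_b$; both rely on classical but nontrivial facts about algebraic group actions on projective varieties, and it is precisely here that the symplectic/Lagrangian structure enters via $T_b^*B \simeq \Fa_b$.
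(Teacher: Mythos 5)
Your proof is essentially correct and reconstructs the argument of Migliorini--Shende \cite[Proposition 4.3]{disc}, which is exactly what the paper cites for this statement without reproducing the proof. The dualization of the transversality condition, the identification of $\sigma_x^{-1}\circ d\pi_{M,x}^*$ with the infinitesimal $A_b$-action (via density from $M^{\mathrm{sm}}$), the equality $\bigcup_{x\in M_b}W_x = \Fr_b$, and the concluding linear algebra are all in order.

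Two formulations in the central paragraph deserve tightening. First, the phrase ``the abelian variety quotient $H_b = A_b/R_b$ acts freely on $M_b$'' is a category error: $H_b$ is a quotient of $A_b$, not a subgroup, so it does not act on $M_b$. What you actually need, and what is true, is that the scheme-theoretic stabilizer $\mathrm{Stab}_{A_b}(x)$ of any $x\in M_b$ is affine; this follows because $\mathrm{Stab}_{A_b}(x)$ injects into $\mathrm{Aut}(\CO_{M_b,x}/\Fm_x^k)$ for some $k$ (using that $M_b$ is integral and the $A_b$-action is faithful), hence admits a faithful finite-dimensional representation. A connected affine subgroup of $A_b$ then lands in $R_b$, giving $W_x\subset\Fr_b$. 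Second, for the reverse inclusion, the one-parameter subgroup $\exp(t\xi)$ for $\xi\in\Fr_b$ need not be a closed algebraic subgroup (it can wind densely in a subtorus of $R_b$); one should pass to its Zariski closure, a connected commutative (hence solvable) closed subgroup of $R_b$, and apply Borel's fixed-point theorem to that. With these repairs the equality $\bigcup_x W_x=\Fr_b$ holds, and the rest of the argument is sound.
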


Using Proposition \ref{prop2.2}, we may reformulate Ng\^o's $\delta$-regularity in Proposition \ref{prop1.3} as
\[
\mathrm{codim}_B \Delta^i(\pi_M) \geq i,
\]
which is an immediate consequence of (\ref{codim}).

\subsection{The subvarieties $\Lambda \subset T^*B$ and $\Lambda' \subset P$}\label{Sec2.4}

Recall from (\ref{Lambda}) the conical Lagrangian $\Lambda \subset T^*B$; the ``support part'' of Theorem \ref{gr(P)} follows from Theorem \ref{MS2.1}:
\[
\mathrm{supp}^{\mathrm{red}}(\mathrm{gr}(P_k)) = \mathrm{SS}(P_k) \subset  \mathrm{SS}(R{\pi_M}_* \BQ_M)  \subset \Lambda.
\]

Furthermore, in view of Proposition \ref{prop2.2} the higher discriminants $\Delta^i(\pi_M)$ and the conical Lagrangian $\Lambda \subset T^*B$ are governed by the $\delta$-stratification on the base $B$. In the following, we construct a closed subvariety $\Lambda'\subset P$ using also $\Delta^i(\pi_M)$, which serves as
the counter-part of $\Lambda \subset T^*B$ for the Fourier--Mukai side.

For notational convenience, we denote by $G$ a commutative group space over $B$ which is either $A$ or $P$. Since $\kappa_\Theta$ preserves the affine parts, the $\delta$-functions calculating the dimensions of the maximal affine subgroups for $A$ and $P$ coincide. 

Let 
\[
1 \to R_b \to G_b \to H_b \to 1
\]
be the Chevalley decomposition for the group space $G$ over a closed point $b\in B$. We define the algebraic closed subset $\Delta_G^{\mathrm{aff}} \subset G$ to be the locus $g\in G$ such that $g$ lies in the affine part~$R_{\pi_M(g)}$ over $\pi_M(g) \in B$. 

The lemma below follows from Proposition \ref{prop1.3}.

\begin{lem}
Any irreducible component of $\Delta_G^{\mathrm{aff}}$ has dimension $\leq n$.
\end{lem}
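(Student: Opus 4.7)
The plan is to reduce the statement to Ng\^o's $\delta$-inequality (Proposition \ref{prop1.3}), using the fact that $\Delta_G^{\mathrm{aff}}$ is fibered over $B$ with fibers whose dimensions are controlled by the $\delta$-function, while the loci in $B$ where $\delta$ is large have bounded dimension. Note that by the text's observation (``$\kappa_\Theta$ preserves the affine parts''), the $\delta$-functions for $A$ and $P$ agree, and $\kappa_\Theta : A \to P$ is \'etale, so it suffices to treat one of the cases; below I write the argument uniformly for $G \in \{A, P\}$.

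First, I would decompose $B$ set-theoretically into locally closed strata $S_i = \{b \in B \mid \delta(b) = i\}$, where $i$ ranges over $0 \leq i \leq n$. The upper semicontinuity of $\delta$ gives that $\{b \in B \mid \delta(b) \geq i\}$ is closed, and by Proposition \ref{prop1.3} we have $\dim\{b \in B \mid \delta(b)\geq i\} \leq n - i$, so in particular $\dim S_i \leq n - i$. By definition, over each $b \in S_i$ the fiber of $\Delta_G^{\mathrm{aff}} \to B$ is the affine part $R_b$ of $G_b$, which has dimension exactly~$i$.

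Now let $C$ be any irreducible component of $\Delta_G^{\mathrm{aff}}$ and set $\pi := \pi_G|_{\Delta_G^{\mathrm{aff}}}$. Let $i_0$ denote the generic value of $\delta$ on $\pi(C)$, which by upper semicontinuity is the minimum of $\delta$ on $\pi(C)$; thus $\pi(C) \subset \{b \mid \delta(b) \geq i_0\}$, and therefore
\[
\dim \pi(C) \leq n - i_0.
\]
On the other hand, over the dense open subset $\pi(C) \cap S_{i_0} \subset \pi(C)$, the fiber of $\pi|_C$ is contained in $R_b$ with $\dim R_b = i_0$, so the generic fiber of $\pi|_C$ has dimension at most $i_0$. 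Combining,
\[
\dim C \;\leq\; \dim \pi(C) + i_0 \;\leq\; (n - i_0) + i_0 \;=\; n,
\]
as claimed.

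There is no real obstacle here; the argument is a direct bookkeeping of dimensions along the $\delta$-stratification, together with one input (Ng\^o's inequality) already supplied by Proposition \ref{prop1.3}. The only minor subtlety is ensuring that the generic value of $\delta$ on $\pi(C)$ is well-defined and equal to the minimum, which follows from the irreducibility of $C$ (hence of $\pi(C)$) and the upper semicontinuity of $\delta$ stated just after Proposition \ref{prop1.2}.
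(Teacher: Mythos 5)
Your proof is correct and is exactly the dimension count the paper has in mind: the paper gives no explicit argument, stating only that the lemma ``follows from Proposition \ref{prop1.3},'' and your stratification of $B$ by $\delta$-value combined with Ng\^o's inequality $\dim\{\delta \geq i\} \leq n - i$ and the fiber-dimension bound $\dim R_b = \delta(b)$ is precisely that deduction.
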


We denote by
\[
\Lambda'_G \subset \Delta_G^{\mathrm{aff}}
\]
the union of all $n$-dimensional irreducible components. 

We now give a more concrete description of $\Lambda'_G$ in terms of $\Delta^i(\pi_M)$ parallel to that of $\Lambda$ in (\ref{Lambda}). For a locally closed subset $Z \subset B$, there exists an open dense $V\subset Z$ such that the Chevalley decomposition over $V$ is of the form
\[
1\to R_V \to G|_V \to H_V \to 1
\]
with $R_V$ affine and $H_V$ abelian; \emph{c.f.}~the paragraph before \cite[Section 2.3]{dCRS}. We use $\overline{R_Z}$ to denote the Zariski closure of $R_V$ in $G$ for any choice of $V$ as above.


The following proposition can be compared with the definition (\ref{Lambda}) of $\Lambda \subset T^*B$.

\begin{prop}\label{prop2.3}
We have
\[
\Lambda'_G =  \bigcup_i \bigcup_{Z_i} \overline{R_{Z_i}} \subset G.
\]
Here $Z_i$ runs through purely $i$-codimensional irreducible components of $\Delta^i(\pi_M)$.
\end{prop}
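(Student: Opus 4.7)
The plan is to verify that both sides of the proposed equality coincide with the collection of $n$-dimensional irreducible components of $\Delta_G^{\mathrm{aff}}$, which by definition is $\Lambda'_G$.

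For the inclusion $\supseteq$, I would fix a purely $i$-codimensional component $Z_i$ of $\Delta^i(\pi_M)$ and choose an open dense $V \subset Z_i$ over which $\delta \equiv i$ and the Chevalley decomposition is uniform, giving an affine $V$-group scheme $R_V$ of relative dimension $i$. Since $V$ is irreducible of dimension $n-i$ and the fibers $R_b$ are connected, $R_V$ is irreducible of dimension $(n-i)+i = n$. As $R_V \subset \Delta_G^{\mathrm{aff}}$ and the latter is closed, $\overline{R_{Z_i}}$ lies in $\Delta_G^{\mathrm{aff}}$ and is irreducible of dimension $n$; by the preceding lemma $n$ is the maximal possible dimension for any irreducible subset, so $\overline{R_{Z_i}}$ is an irreducible component of $\Delta_G^{\mathrm{aff}}$ and hence belongs to $\Lambda'_G$.

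For the reverse inclusion $\subseteq$, let $W$ be any $n$-dimensional irreducible component of $\Delta_G^{\mathrm{aff}}$, and set $Z := \overline{\pi_G(W)} \subset B$, which is irreducible because $W$ is. Let $i$ be the value of the $\delta$-function at the generic point of $Z$, so that $Z \subset \Delta^i(\pi_M)$ by the upper semicontinuity of $\delta$. The fiber of $W \to Z$ over the generic point $\eta$ has dimension $n-\dim Z$ and is contained in $R_\eta$, which has dimension $i$, so $\operatorname{codim}_B Z \leq i$. Combined with the lower bound $\operatorname{codim}_B \Delta^i(\pi_M) \geq i$ from Ng\^o's $\delta$-inequality (Proposition \ref{prop1.3}, cf.\ Proposition \ref{prop2.2} and (\ref{codim})), the irreducible component of $\Delta^i(\pi_M)$ containing $Z$ must be purely $i$-codimensional and equal to $Z$; call it $Z_i$. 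Shrinking to a common open $V \subset Z_i$ where $\delta \equiv i$, both $W \cap G|_V$ and $R_V$ are irreducible of dimension $n$, so the containment $W \cap G|_V \subset R_V$ is an equality; taking closures in $G$ yields $W = \overline{R_{Z_i}}$, as required.

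The main technical subtlety I expect is ensuring that the affine parts $R_b$ are connected, so that $R_V$ is irreducible of the expected dimension and the equality $W\cap G|_V = R_V$ above can be deduced from the dimension count alone. This rests on the integrality assumption on the fibers of $\pi_M$: it forces $A_b = M_b^{\mathrm{sm}}$ and hence the identity components $P_b$ to be connected commutative algebraic groups, to which the classical Chevalley decomposition with connected affine part applies. Once this is in hand, the two dimension inequalities $\dim W \leq \dim Z + i$ and $\operatorname{codim}_B \Delta^i(\pi_M) \geq i$ pinch $Z$ onto a single $i$-codimensional component of $\Delta^i(\pi_M)$, and matching closures identifies the two sides of the proposition.
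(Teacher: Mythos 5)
Your proof is correct and follows essentially the same route as the paper: prove $\supseteq$ using that $\Delta_G^{\mathrm{aff}}$ is closed together with a dimension count, and prove $\subseteq$ by taking an $n$-dimensional component $W$, setting $Z = \overline{\pi_G(W)}$, and pinching $\mathrm{codim}_B Z$ between the upper bound $\leq i$ (from the fiber dimension estimate) and the lower bound $\geq i$ (from Ng\^o's $\delta$-inequality via $\Delta^i(\pi_M)$). Your write-up is in fact slightly more careful than the paper's: the paper asserts that a general fiber of $W \to Z$ has dimension $i$, which on its face only gives $\leq i$; you correctly derive $\mathrm{codim}_B Z \leq i$ from containment in $R_\eta$, then combine with the reverse inequality, which is the clean way to close the argument. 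Your remark about connectedness of $R_b$ is a reasonable sanity check but is automatic: the fibers $G_b$ are connected commutative algebraic groups (the smooth locus $A_b$ of an integral fiber, or the identity component $P_b$), and in characteristic zero the maximal affine subgroup in Chevalley's theorem is connected, so $R_V \to V$ is smooth with connected fibers over an irreducible base, hence irreducible of dimension $n$ as you use.
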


\begin{proof}
Since specializations preserve affine parts of the groups, we have
\[
 \Lambda'_G \supseteq \bigcup_i \bigcup_{Z_i} \overline{R_{Z_i}}.
\]
To prove the other inclusion, we take $W$ to be an irreducible component of $\Lambda_G'$; by definition it is purely of dimension $n$. We consider $Z = \pi_M(W) \subset B$. Assume that a general point $b \in Z$ satisfies $\delta(b) = i$. Hence a general fiber of $W \to Z$ has dimension $i$. This implies that $Z$ has codimension $i$ since
\[
\dim W = n. 
\]
On the other hand, we have $Z \subset \Delta^i(\pi_M)$ where the latter has codimension at least $i$ by~(\ref{codim}). Therefore $Z$ is an $i$-dimensional irreducible component of $\Delta^i(\pi_M)$, and the irreducible component $W$ is of the form $\overline{R_Z}$. This proves that $W \subset \Lambda_G'$.
\end{proof}

In the following corollary, we relate $\Lambda \subset T^*B$ to
\[
\Lambda'_A \subset A, \quad \Lambda'_P \subset P.
\]

\begin{cor}\label{cor2.4}
\begin{enumerate}
    \item[(i)] For the identification $\hat{B}_A = \hat{B}_{T^*B}$ of formal neighborhoods induced by the symplectic form $\sigma$, we have
    \[
    \Lambda'_A|_{\hat{B}_A} = \Lambda|_{\hat{B}_{T^*B}}.
    \]
    \item[(ii)] For any choice of $\Theta$ in (\ref{choice})  which identifies $\hat{B}_P$ and $\hat{B}_{T^*B}$ as in Proposition \ref{prop1.4}, we have
\[
  \Lambda_P'|_{\hat{B}_P} =   \Lambda|_{\hat{B}_{T^*B}}.
\]
\end{enumerate}
\end{cor}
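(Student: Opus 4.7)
Part (ii) reduces to part (i) via the \'etale homomorphism $\kappa_\Theta\colon A \to P$. By the construction in Proposition~\ref{prop1.4}, the isomorphism $\hat\kappa_\Theta\colon \hat{B}_P \xrightarrow{\sim} \hat{B}_{T^*B}$ factors through $\hat{B}_A$, and since $\kappa_\Theta$ is an \'etale morphism of $B$-group spaces, it preserves the Chevalley decomposition fiberwise, sending $R^A_V$ to $R^P_V$ \'etale-locally over each stratum $V$. Passing to closures, $\kappa_\Theta$ carries $\Lambda'_A$ onto $\Lambda'_P$ over a neighborhood of $B$, so part~(ii) will follow from part~(i).

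For part (i), I combine the description of $\Lambda$ in~\eqref{Lambda} with Proposition~\ref{prop2.3}:
\[
\Lambda \;=\; \bigcup_i \bigcup_{Z_i} \overline{T^*_{Z_i} B}, \qquad
\Lambda'_A \;=\; \bigcup_i \bigcup_{Z_i} \overline{R_{Z_i}},
\]
where both unions run over the same collection of purely $i$-codimensional irreducible components $Z_i$ of $\Delta^i(\pi_M)$. It therefore suffices to match these unions stratum by stratum after restriction to $\hat{B}$. The formal logarithm identifies $\hat{B}_A \simeq \hat{B}_{N_{B/A}}$ and carries the formal completion of $R_{Z_i}$ along the identity section to the formal completion, along the zero section, of the subbundle $\mathrm{Lie}(R_{Z_i}) \subset N_{B/A}|_{Z_i}$ (first defined over the open dense smooth locus $V \subset Z_i$ of the Chevalley sequence, then extended by Zariski closure). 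Under the symplectic isomorphism $N_{B/A} \simeq T^*B$, the whole problem reduces to the tangent-level identification of rank-$i$ subbundles
\[
\sigma\bigl(\mathrm{Lie}(R_V)\bigr) \;=\; N^*_{V/B} \;\subset\; T^*B|_V.
\]

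Both sides have rank $i$: the left because $\delta|_V = i$ (combining $V \subset \Delta^i(\pi_M)$ with Proposition~\ref{prop1.3}, which forces equality on any codimension-$i$ irreducible component), and the right because $V$ has codimension $i$ in $B$. Hence a single containment will suffice. The containment $\sigma(\mathrm{Lie}(R_V)) \subset N^*_{V/B}$ is the infinitesimal shadow of Ng\^o's $\delta$-regularity: a vector $\xi \in \mathrm{Lie}(R_v)$ for $v \in V$ extends to a vertical vector field on $A$ defined over a neighborhood of $v$ in $V$ (the affine one-parameter subgroup persists tangentially along $V$ but is obstructed transverse to $V$), and such a field pairs to zero, via $\sigma$, with horizontal vectors tangent to $V$. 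Making this symplectic/infinitesimal argument precise is the main technical step; once it is established, taking Zariski closures in $N_{B/A}$ and $T^*B$ and restricting to $\hat{B}$ yields part~(i), and hence part~(ii).
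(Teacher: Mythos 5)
Your proposal follows the same reduction that the paper uses. For part (ii), "the \'etale map $\kappa_\Theta\colon A\to P$ preserves the affine parts" is exactly what the paper invokes, and your observation that $\hat\kappa_\Theta$ factors through $\hat{B}_A$ is consistent with the construction in Proposition~\ref{prop1.4}. For part (i), combining Proposition~\ref{prop2.3} with the definition~\eqref{Lambda} of $\Lambda$ and matching stratum-by-stratum is also the paper's approach.

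The divergence is in what is delegated. The paper reduces the claim to
\[
\overline{R_{Z_i}}\big|_{\hat{B}_A} = \overline{T^*_{Z_i}B}\big|_{\hat{B}_{T^*B}}
\]
and then cites the proof of Proposition 4.3 in Migliorini--Shende for this identity. You instead attempt to supply the argument: pass to the logarithm, reduce to the tangent-level identification $\sigma(\mathrm{Lie}(R_V)) = N^*_{V/B}$, observe both are rank-$i$ subbundles of $T^*B|_V$ (using Proposition~\ref{prop1.3} on the left), so that a single containment suffices. This rank-count reduction is correct and clean. But the argument you offer for the containment $\sigma(\mathrm{Lie}(R_V)) \subset N^*_{V/B}$ --- that a vector $\xi \in \mathrm{Lie}(R_v)$ extends to a vertical vector field along $V$ which "pairs to zero, via $\sigma$, with horizontal vectors tangent to $V$'' --- is a heuristic, not a proof, and you acknowledge this yourself. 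In general $\sigma$ pairs vertical vectors nondegenerately with horizontal lifts (that is exactly the isomorphism $N_{B/A}\simeq T^*B$), so the vanishing must use something specific about the \emph{affine} part and its behavior along the equisingular stratum. This is precisely the symplectic content of Ng\^o's $\delta$-regularity and of the Migliorini--Shende computation, which requires a careful local analysis of the degenerating group scheme (essentially, that the Kodaira--Spencer/weight-filtration structure of the degeneration kills the pairing along $V$). That analysis is the step your proposal leaves open; the paper sidesteps it by citing [disc].
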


\begin{proof}
The first part follows from Proposition \ref{prop2.3} and the proof of \cite[Proposition 4.3]{disc}, that 
\[
\overline{R_{Z_i}}|_{\hat{B}_A}  = \overline{T^*_{Z_i}B}|_{\hat{B}_{T^*B}}.
\]

The second part follows from the fact that the \'etale map between the groups
\[
{\kappa}_\Theta: A \to P
\]
preserves the affine parts.
\end{proof}

From now on we set
\[
\Lambda' : = \Lambda'_P \subset P.
\]

\section{Fourier--Mukai I: the perverse--Hodge symmetry}

\subsection{Overview}
We review Arinkin--Fedorov's Fourier--Mukai transform and deduce some of its basic properties. This recovers the border cases $k=0,2n$ of Conjecture \ref{main_conj2}; see Corollary~\ref{cor3.2}. Then we discuss a reformulation from the Fourier--Mukai viewpoint of Matsushita's theorem \cite{Ma3} concerning the higher direct image of $\CO_M$ and its generalization --- the perverse--Hodge symmetry (Conjecture \ref{conj3.4}) \cite{SY,SY2}. We conclude this section by proving Proposition~\ref{prop0.6}.

\subsection{Fourier--Mukai functors} \label{Sec3.2}
We start with a brief review of the classical Fourier--Mukai transform \cite{Mukai}.

Let $A$ be an $n$-dimensional abelian variety with $P = \mathrm{Pic}^0(A)$ its dual. Then a universal family induces a canonical (normalized) Poincar\'e line bundle $\CP$ on $A \times P$; it further induces two functors for the bounded derived categories of coherent sheaves:
\[
\Phi_{\mathrm{FM}}: D^b\mathrm{Coh}(P) \to D^b\mathrm{Coh}(A), \quad \CE \mapsto R{q_{A*}}(q_P^* \CE \otimes \CP) 
\]
and
\[
\phi_{\mathrm{FM}}:  D^b\mathrm{Coh}(A) \to D^b\mathrm{Coh}(P),\quad \CE \mapsto R{q_{P*}}(q_A^* \CE \otimes \CP^\vee )[n].
\]
Here $q_A$ and $q_P$ are the natural projections from $A\times P$ to the corresponding factors. Both functors $\Phi_{\mathrm{FM}}$ and $\phi_{\mathrm{FM}}$ are equivalences of categories, and they are inverses of each other:
\[
\phi_{\mathrm{FM}} \circ \Phi_{\mathrm{FM}}  \simeq \mathrm{id}_{D^b\mathrm{Coh}(P)}, \quad \Phi_{\mathrm{FM}} \circ \phi_{\mathrm{FM}}  \simeq \mathrm{id}_{D^b\mathrm{Coh}(A)}.
\]

Arinkin--Fedorov \cite{AF} generalizes the picture above and their construction works for a large class of degenerate abelian schemes including integrable systems; see also \cite{A1, A2} for the case of compactified Jacobians. Here we focus on the case of the integrable system $\pi_M: M \to B$ with 
\[
\dim M = 2 \dim B = 2n.
\]
Recall the relative Picard space $P$ which is smooth over $B$, and the normalized universal line bundle $\CL$ over $M \times_B P$. Similarly, we have two functors
\[
\Phi_{\mathrm{FM}}: D\mathrm{QCoh}(P) \to D\mathrm{QCoh}(M), \quad \CE \mapsto R{q_{M*}}(q_P^* \CE \otimes \CL) 
\]
and
\[
\phi_{\mathrm{FM}}:  D\mathrm{QCoh}(M) \to D\mathrm{QCoh}(P),\quad \CE \mapsto R{q_{P*}}(q_M^* \CE \otimes \CL^\vee ) \otimes \pi_P^* \omega_B^\vee[n].
\]
When $\pi_M$ has singular fibers, the above two functors are no longer equivalences. Nevertheless, we still have by \cite{AF} that  
\begin{equation} \label{eq:phiPhi}
\phi_{\mathrm{FM}} \circ \Phi_{\mathrm{FM}} \simeq \mathrm{id}_{D\mathrm{QCoh}(P)}: D\mathrm{QCoh}(P) \to D\mathrm{QCoh}(P),
\end{equation}
and in particular $\Phi_{\mathrm{FM}}$ is fully-faithful. We note that since $q_M: M\times_BP \to P$ is proper, the functor $\phi_{\mathrm{FM}}$ preserves bounded coherent complexes.

\begin{prop}\label{prop3.1}
For $\CK \in D^b\mathrm{Coh}(B)$, we have
\begin{equation*}\label{eqn3.1}
\Phi_{\mathrm{FM}}(0_{B*}\CK) \simeq \pi_M^*\CK, \quad \phi_{\mathrm{FM}}(\pi_M^*\CK) \simeq 0_{B*} \CK.
\end{equation*}
\end{prop}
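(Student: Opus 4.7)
The plan is to prove the first identity $\Phi_{\mathrm{FM}}(0_{B*}\CK)\simeq\pi_M^*\CK$ by a direct base-change and projection-formula computation, and then deduce the second identity by applying $\phi_{\mathrm{FM}}$ to both sides and invoking the relation $\phi_{\mathrm{FM}}\circ\Phi_{\mathrm{FM}}\simeq\mathrm{id}$ of \eqref{eq:phiPhi}.

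For the first identity, I would work with the Cartesian square
\[
\begin{tikzcd}
M \arrow[r,"\tilde 0_B"]\arrow[d,"\pi_M"'] & M\times_B P \arrow[d,"q_P"] \\
B \arrow[r,"0_B"'] & P,
\end{tikzcd}
\]
where $\tilde 0_B(m)=(m,0_B(\pi_M(m)))$, so that $q_M\circ\tilde 0_B=\mathrm{id}_M$, and by the normalization of $\CL$ along both zero sections one has $\tilde 0_B^*\CL\simeq\CO_M$. The square is Tor-independent because $0_B$ is the section of the smooth morphism $\pi_P$, hence a regular closed embedding whose local defining equations pull back to a regular sequence in $M\times_B P$; this yields the base-change isomorphism $Lq_P^*(0_{B*}\CK)\simeq\tilde 0_{B*}\pi_M^*\CK$. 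Substituting into $\Phi_{\mathrm{FM}}$ and applying the projection formula for $\tilde 0_B$ together with $\tilde 0_B^*\CL\simeq\CO_M$ collapses the expression to $R(q_M\circ\tilde 0_B)_*\pi_M^*\CK=\pi_M^*\CK$.

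The second identity is then immediate: apply $\phi_{\mathrm{FM}}$ to the first identity and use \eqref{eq:phiPhi} to obtain $\phi_{\mathrm{FM}}(\pi_M^*\CK)\simeq\phi_{\mathrm{FM}}\circ\Phi_{\mathrm{FM}}(0_{B*}\CK)\simeq 0_{B*}\CK$. The only step requiring real care is the Tor-independence of the square, which underwrites the base-change isomorphism: neither $q_P$ nor $0_B$ is a priori flat, so one must genuinely exploit the geometric input that $\pi_P$ is smooth of relative dimension $n$ and that $0_B$ is its section. This produces local coordinates on $P$ in which $0_B(B)$ is cut out by a length-$n$ regular sequence, and pulling these back along $q_P$ one verifies by a local Koszul computation that the sequence remains regular on $M\times_B P$, so that $\CO_M\otimes^L_{\CO_P}\CO_B\simeq\CO_M$. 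Once this geometric input is in place, everything else is formal.
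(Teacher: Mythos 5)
Your proposal is correct and follows essentially the same route as the paper: base change along the Cartesian square to rewrite $q_P^*0_{B*}\CK$ as $0_{M*}\pi_M^*\CK$, then the projection formula and the normalization of $\CL$, with the second identity deduced from the first via $\phi_{\mathrm{FM}}\circ\Phi_{\mathrm{FM}}\simeq\mathrm{id}$. The only place you overwork is the Tor-independence: since $\pi_M$ is flat (miracle flatness for an equidimensional morphism between smooth varieties), its base change $q_P$ is flat, so the base-change isomorphism holds without needing the local Koszul/regular-sequence analysis.
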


\begin{proof}
By \eqref{eq:phiPhi} it suffices to prove the first statement. Since $\CL$ is trivialized along the $0$-section~$0_B:B \rightarrow P$, we have
\begin{equation}\label{3.4_1}
0_B^* \CL \simeq 0_B^* \CL^\vee \simeq \CO_M.
\end{equation}
We use $0_M: M \rightarrow M\times_BP$ to denote the base change of the $0$-section $0_B: B \rightarrow P$. We have
\begin{align*}
\Phi_{\mathrm{FM}}(0_{B*}\CK) & \simeq Rq_{M*}(q_P^*0_{B*}\CK \otimes \CL) \\
& \simeq Rq_{M*}(0_{M*}\pi_M^*\CK \otimes \CL) \\
& \simeq Rq_{M*}0_{M*}(\pi_M^*\CK \otimes 0_M^*\CL) \\
& \simeq \pi_M^*\CK.
\end{align*}
Here the second isomorphism is the base change $q_P^*0_{B*} \simeq 0_{M*}\pi_M^*$, the third isomorphism is the projection formula, and the fourth isomorphism follows from $q_M \circ 0_M = \mathrm{id}_M$ and \eqref{3.4_1}.
\end{proof}

Applying Proposition \ref{prop3.1} to $\CK = \CO_B$, we can verify the border cases of Conjecture \ref{main_conj2}.

\begin{cor}\label{cor3.2}
Conjecture \ref{main_conj2} holds for $k = 0, 2n$.
\end{cor}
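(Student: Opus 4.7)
The plan is to reduce both cases to the single identity $0_{B*}\mathcal{O}_B = 0_{B*}\mathcal{O}_B$ and then transport the result across $\hat{\kappa}_\Theta$.

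For $k = 0$, I would start by observing that $\Omega_M^0 = \mathcal{O}_M = \pi_M^*\mathcal{O}_B$. Then Proposition \ref{prop3.1} applied to $\CK = \CO_B$ gives
\[
\phi_{\mathrm{FM}}(\Omega_M^0) = \phi_{\mathrm{FM}}(\pi_M^*\mathcal{O}_B) \simeq 0_{B*}\mathcal{O}_B \in D^b\mathrm{Coh}(P),
\]
a sheaf scheme-theoretically supported on the $0$-section. Restricting to $\hat{B}_P$ leaves this sheaf unchanged.

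On the decomposition side, since every geometric fiber of $\pi_M$ is integral (in particular connected) of dimension $n$, the lowest perverse sheaf in the decomposition is $P_0 \simeq \BQ_B[n]$, whose lift to Saito's category of mixed Hodge modules is the constant Hodge module $\BQ^H_B[n]$. Its underlying filtered $\CD_B$-module is $(\mathcal{O}_B, F_\bullet)$ with $F_0 = \mathcal{O}_B$ and $F_{-1} = 0$, so the associated graded on $T^*B$ is $\mathrm{gr}(P_0) \simeq 0_{B*}\mathcal{O}_B$. Under the identification $\hat{\kappa}_\Theta \colon \hat{B}_P \xrightarrow{\simeq} \hat{B}_{T^*B}$ of Proposition \ref{prop1.4}, the two $0$-sections correspond to the common $B \hookrightarrow \hat{B}$, so the two sheaves $0_{B*}\mathcal{O}_B$ agree after restriction to $\hat{B}$, proving the conjecture for $k=0$.

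For $k = 2n$, I would appeal to the remark after Conjecture \ref{main_conj2}: the symplectic form $\sigma$ gives $\Omega_M^{2n} \simeq \Omega_M^0$, hence $\phi_{\mathrm{FM}}(\Omega_M^{2n}) \simeq \phi_{\mathrm{FM}}(\Omega_M^0)$, while relative Hard Lefschetz yields $P_{2n} \simeq P_0$ (up to Tate twist, which does not affect the underlying coherent sheaf on $T^*B$), so $\mathrm{gr}(P_{2n}) \simeq \mathrm{gr}(P_0)$. The case $k = 2n$ therefore reduces directly to the case $k = 0$ just verified.

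Neither step is really an obstacle here: the content sits entirely in Proposition \ref{prop3.1} on the Fourier--Mukai side and in the standard computation of the associated graded of the constant Hodge module on the decomposition side. If anything needs extra care, it is the bookkeeping that $\hat{\kappa}_\Theta$ matches the two $0$-sections and preserves the structure sheaf of $B$, but this is immediate from the construction of $\hat{\kappa}_\Theta$ in Proposition \ref{prop1.4}, which is built from an \'etale isomorphism of normal bundles and an identification coming from~$\sigma$, both trivial on $B$ itself.
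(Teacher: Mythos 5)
Your proof is correct and follows essentially the same route as the paper: compute $\phi_{\mathrm{FM}}(\mathcal{O}_M) \simeq 0_{B*}\mathcal{O}_B$ via Proposition~\ref{prop3.1}, observe $\mathrm{gr}(P_0) \simeq 0_{B*}\mathcal{O}_B$ from the constant Hodge module, note both are scheme-theoretically on the $0$-section, and handle $k=2n$ via the duality remark. The paper's proof is terser but identical in substance; your extra care in spelling out why $\mathrm{gr}(P_0)$ is the structure sheaf of the $0$-section and in invoking the remark for $k=2n$ is just a more explicit version of the same argument.
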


\begin{proof}
We have
\[
\phi_{\mathrm{FM}}(\CO_M) \simeq 0_{B*}\CO_B \in \mathrm{Coh}(P), \quad \mathrm{gr}(P_0) \simeq 0_{B*}\CO_B \in \mathrm{Coh}(T^*B).
\]
Both are structure sheaves of the $0$-sections; in particular, they are isomorphic in the formal neighborhoods of $B$.
\end{proof}

As in the abelian variety case, there is a (derived) Pontryagin product on $P$. Consider the addition map
\[
m_P : P \times_B P \to P.
\]
We define
\[
\star^R : D\mathrm{QCoh}(P) \times D\mathrm{QCoh}(P) \to D\mathrm{QCoh}(P), \quad (\CE_1, \CE_2) \mapsto Rm_{P*}(q_1^*\CE_1 \otimes^L q_2^*\CE_2)
\]
where $q_1$ and $q_2$ are the two projections from $P \times_B P$.

\begin{prop} \label{prop3.3}
For $\CE_1, \CE_2 \in D\mathrm{QCoh}(P)$ and $\CK_1, \CK_2 \in D^b\mathrm{Coh}(M)$, we have
\[
\Phi_{\mathrm{FM}}(\CE_1 \star^R \CE_2) \simeq \Phi_{\mathrm{FM}}(\CE_1) \otimes^L \Phi_{\mathrm{FM}}(\CE_2), \quad \phi_{\mathrm{FM}}(\CK_1 \otimes^L \CK_2) \simeq \phi_{\mathrm{FM}}(\CK_1) \star^R \phi_{\mathrm{FM}}(\CK_2).
\]
\end{prop}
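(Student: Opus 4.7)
The plan is to prove both identities by a direct computation on the triple fibre product $X := M \times_B P \times_B P$. The essential input is the \emph{biextension property} of the normalized universal line bundle $\CL$: on $X$ one has a canonical isomorphism
\[
(\mathrm{id}_M \times m_P)^* \CL \simeq p_{MP_1}^* \CL \otimes p_{MP_2}^* \CL,
\]
where $p_{MP_i} \colon X \to M \times_B P$ retains the $i$-th copy of $P$ and $\mathrm{id}_M \times m_P \colon X \to M \times_B P$ uses the group law in $P$. This records the Picard-functor description of $P$---addition in $P$ corresponds to the tensor product of line bundles on fibres of $\pi_M$---and, thanks to the chosen normalizations $\CL|_{0_B(B) \times_B M} \simeq \CO_M$ and $\CL|_{s_B(B) \times_B P} \simeq \CO_P$, it holds on the nose rather than up to a line bundle pulled back from $B$.

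For the first identity, first I would unfold
\[
\Phi_{\mathrm{FM}}(\CE_1 \star^R \CE_2) = Rq_{M*}\bigl(q_P^*\, Rm_{P*}(q_1^* \CE_1 \otimes^L q_2^* \CE_2) \otimes \CL\bigr),
\]
then apply flat base change along the Cartesian square formed by $q_P$ and $m_P$ (valid since $\pi_P \colon P \to B$ is smooth) to commute $q_P^*$ past $Rm_{P*}$. The projection formula absorbs $\CL$ as $(\mathrm{id}_M \times m_P)^* \CL$ inside the pushforward, at which point the biextension identity together with a regrouping of the outer projections converts the expression into
\[
Rp_{M*}\bigl(p_{MP_1}^*(q_P^* \CE_1 \otimes \CL) \otimes^L p_{MP_2}^*(q_P^* \CE_2 \otimes \CL)\bigr),
\]
where $p_M \colon X \to M$ denotes the projection to $M$. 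A second flat base change, this time on the Cartesian square whose two edges are both $q_M \colon M \times_B P \to M$ and whose fibre product is exactly $X$, identifies this expression with $\Phi_{\mathrm{FM}}(\CE_1) \otimes^L \Phi_{\mathrm{FM}}(\CE_2)$.

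The second identity is proved by the analogous computation with $\CL$ replaced by $\CL^\vee$; the biextension is used in the dual form $(\mathrm{id}_M \times m_P)^* \CL^\vee \simeq p_{MP_1}^* \CL^\vee \otimes p_{MP_2}^* \CL^\vee$, and the twists by $\pi_P^* \omega_B^\vee[n]$ entering the definition of $\phi_{\mathrm{FM}}$ are tracked through the projection formula for $Rm_{P*}$, using $\pi_P \circ q_i = \pi_P \circ m_P$ as maps $P \times_B P \to B$ for $i = 1, 2$.

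The bulk of the proof is diagrammatic bookkeeping with base change and projection formula; the one substantive input is the biextension isomorphism, whose precise formulation---in particular the absence of a twist by a line bundle pulled back from $B$---is exactly what the normalizations of $\CL$ along the two sections are designed to secure. Once that input is in place, the cascade of formal manipulations yields both identities.
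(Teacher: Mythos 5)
Your proof is correct and follows essentially the same route as the paper: the one substantive input is the identity $(\mathrm{id}_M \times m_P)^*\CL \simeq p_{MP_1}^*\CL \otimes p_{MP_2}^*\CL$ on $M \times_B P \times_B P$, after which both arguments proceed by the same cascade of flat base change and projection formula. The only (inessential) difference is that you justify the key identity via the biextension/Picard-functor description and the two normalizations of $\CL$, while the paper invokes the theorem of the cube; these are interchangeable routes to the same isomorphism, and your bookkeeping of the $\omega_B^\vee[n]$ twist in the second identity matches what the paper leaves implicit by declaring it ``similar.''
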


\begin{proof}
The two statements are similar; we only prove the first. By the theorem of the cube applied to $M \times_B P \times_B P$, we have
\begin{equation} \label{eq:cube}
(\mathrm{id} \times m_P)^*\CL \simeq q_{12}^*\CL \otimes q_{13}^*\CL
\end{equation}
where the $q_{ij}$ are the natural projections from $M \times_B P \times_B P$ to the respective factors. Then
\begin{align*}
\Phi_{\mathrm{FM}}(\CE_1 \star^R \CE_2) & \simeq Rq_{M*}(q_P^*Rm_{P*}(q_1^*\CE_1 \otimes^L q_2^*\CE_2) \otimes \CL) \\
& \simeq Rq_{M*} (R(\mathrm{id} \times m_P)_*q_{23}^*(q_1^*\CE_1 \otimes^L q_2^*\CE_2) \otimes \CL) \\
& \simeq Rq_{M*} R(\mathrm{id} \times m_P)_*(q_{12}^*q_P^*\CE_1 \otimes^L q_{13}^*q_P^*\CE_2 \otimes (\mathrm{id} \times m_P)^*\CL) \\
& \simeq Rq_{M*} Rq_{13*}(q_{12}^*q_P^*\CE_1 \otimes^L q_{13}^*q_P^*\CE_2 \otimes q_{12}^*\CL \otimes q_{13}^*\CL) \\
& \simeq Rq_{M*} (Rq_{13*}q_{12}^*(q_P^*\CE_1 \otimes \CL) \otimes^L (q_P^*\CE_2 \otimes \CL)) \\
& \simeq Rq_{M*} (q_M^*Rq_{M*}(q_P^*\CE_1 \otimes \CL) \otimes^L (q_P^*\CE_2 \otimes \CL)) \\
& \simeq \Phi_{\mathrm{FM}}(\CE_1) \otimes^L \Phi_{\mathrm{FM}}(\CE_2).
\end{align*}
Here the second isomorphism is the base change $q_P^*Rm_{P*} \simeq R(\mathrm{id} \times m_P)_*q_{23}^*$, the third isomorphism is the projection formula, the fourth isomorphism follows from $q_M \circ (\mathrm{id} \times m_P) = q_M \circ q_{13}$ and \eqref{eq:cube}, the fifth isomorphism is again the projection formula, the sixth isomorphism is the base change $Rq_{13*}q_{12}^* \simeq q_M^*Rq_{M*}$, and the last isomorphism follows from the projection formula and the definition of $\Phi_{\mathrm{FM}}$.
\end{proof}

\subsection{Pushforward and Fourier--Mukai}
The Fourier--Mukai functors $\Phi_{\mathrm{FM}}$ and $\phi_{\mathrm{FM}}$ are also compatible with the pushforwards
\[
R\pi_{P*}: D\mathrm{QCoh}(P) \to D\mathrm{QCoh}(B), \quad R\pi_{M*}: D^b\mathrm{Coh}(M) \rightarrow D^b\mathrm{Coh}(B).
\]

\begin{prop}\label{prop3.2}
For $\CE \in D\mathrm{QCoh}(P)$ and $\CK \in D^b\mathrm{Coh}(M)$, we have
\[
Ls_B^*\,\Phi_{\mathrm{FM}} (\CE) \simeq R\pi_{P*}\CE, \quad L0_B^*\,\phi_{\mathrm{FM}} (\CK) \simeq R\pi_{M*}\CK \otimes \omega_B^\vee[n].
\]
\end{prop}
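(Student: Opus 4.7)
The plan is to derive both identities from base change around the Cartesian squares cut out by the two sections, together with the normalizations $\CL|_{s_B(B)\times_B P}\simeq \CO_P$ and $\CL|_{M\times_B 0_B(B)}\simeq \CO_M$ of the universal line bundle. The argument is structurally parallel to the proof of Proposition \ref{prop3.1}, but with $s_B^*$ and $0_B^*$ applied to the outputs of $\Phi_{\mathrm{FM}}$ and $\phi_{\mathrm{FM}}$ rather than to their inputs.

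For the first identity, I form the Cartesian square whose horizontal arrows are $s_B\colon B\to M$ and its canonical lift $\tilde{s}_B\colon P\to M\times_B P$, $p\mapsto (s_B(\pi_P(p)), p)$, with vertical arrows $\pi_P$ and $q_M$. Since $\pi_P$ is smooth, its base change $q_M$ is flat, so the square is tor-independent and base change supplies $Ls_B^*\,Rq_{M*}\simeq R\pi_{P*}\,L\tilde{s}_B^*$. Applied to $q_P^*\CE\otimes\CL$, and using $q_P\circ \tilde{s}_B = \mathrm{id}_P$ together with $\tilde{s}_B^*\CL\simeq \CO_P$ (the normalization along $s_B$), the integrand pulls back to $\CE$, which yields the first formula.

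For the second identity, I use the symmetric Cartesian square with horizontal arrows $0_B\colon B\to P$ and $\tilde{0}_B\colon M\to M\times_B P$, $m\mapsto (m, 0_B(\pi_M(m)))$, and vertical arrows $\pi_M$ and $q_P$. The morphism $\pi_M$ is flat (any Lagrangian fibration is equidimensional onto a smooth base, so miracle flatness applies) and proper, hence so is $q_P$, and base change again gives $L0_B^*\,Rq_{P*}\simeq R\pi_{M*}\,L\tilde{0}_B^*$. Pulling $q_M^*\CK\otimes\CL^\vee$ back via $\tilde{0}_B$ and using $q_M\circ\tilde{0}_B = \mathrm{id}_M$ with $\tilde{0}_B^*\CL\simeq \CO_M$ (the normalization along $0_B$) collapses the integrand to $\CK$. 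The twist $\pi_P^*\omega_B^\vee[n]$ in $\phi_{\mathrm{FM}}$ pulls back along $0_B$ to $\omega_B^\vee[n]$ since $\pi_P\circ 0_B = \mathrm{id}_B$, producing the stated identity. No genuine obstacle arises; the main point of care is verifying tor-independence of each square (equivalently, flatness of $q_M$ and of $q_P$) so as to justify the base change isomorphisms.
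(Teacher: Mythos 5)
Your proposal is correct and follows essentially the same route as the paper: the paper proves the second identity via the base change $L0_B^*Rq_{P*} \simeq R\pi_{M*}L0_M^*$ around the Cartesian square (your $\tilde{0}_B$ is the paper's $0_M$), then collapses the integrand using $q_M \circ 0_M = \mathrm{id}_M$ and the normalization $0_M^*\CL^\vee \simeq \CO_M$, leaving the first identity as an analogous exercise. Your added remarks on flatness of $\pi_M$ (via equidimensionality and miracle flatness) and of $q_M$ are correct justifications of the tor-independence that the paper leaves implicit.
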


\begin{proof}
The two statements are similar; we only prove the second. We have
\begin{align*}
    L0_B^*\,\phi_{\mathrm{FM}}(\CK) & \simeq L0_B^* Rq_{P*} ( q_{M}^*\CK \otimes \CL^\vee) \otimes \omega_B^\vee [n] \\
    & \simeq R\pi_{M*} L0_M^* (q_M^*\CK\otimes \CL^\vee ) \otimes \omega_B^\vee [n] \\
    & \simeq R\pi_{M*} ( \CK \otimes 0_M^* \CL^\vee) \otimes \omega_B^\vee [n] \\
    & \simeq R\pi_{M*} \CK \otimes \omega_B^\vee[n].
\end{align*}
Here the second isomorphism is the base change $L0_B^*Rq_{P*} \simeq R\pi_{M*}L0_M^*$, the third isomorphism is given by $q_M \circ 0_M  = \mathrm{id}_M$, and the fourth isomorphism follows from (\ref{3.4_1}).
\end{proof}


The following example explains a connection between the Fourier--Mukai transform and a theorem of Matsushita \cite{Ma3}. It can be thought of as a first example where the decomposition theorem and the Fourier--Mukai transform are related.

\begin{example}\label{ex3.3}
Applying Proposition \ref{prop3.2} to $\CK = \CO_M$, we obtain from Proposition \ref{prop3.1} that
\[
R\pi_{M*}\CO_M \simeq L0_B^*\CO_B  \otimes \omega_B[-n].
\]
The object $L0_B^*\CO_B$ is the derived self-tensor of the structure sheaf $\CO_B$ of the $0$-section $B \subset P$. Using either the Koszul resolution or the derived self-intersection formula \cite{AC}, we have
\begin{equation} \label{eq:self}
L0_B^* \CO_B \simeq \bigoplus_{k=0}^n \wedge^k N^\vee_{B/P}[k] \simeq \bigoplus_{k=0}^n T_B^k[k].
\end{equation}
Therefore
\begin{equation}\label{Mat0}
R\pi_{M*} \CO_M \simeq \bigoplus_{k=0}^nT_B^k \otimes \omega_B[k-n] \simeq \bigoplus_{k=0}^n \Omega_B^k[-k].
\end{equation}
The equation (\ref{Mat0}) implies simultaneously that
\begin{enumerate}
    \item[(i)] the derived direct image admits a splitting into sheaves
    \begin{equation} \label{eq:kollar}
    R\pi_{M*} \CO_M \simeq \bigoplus_{k=0}^n R^k\pi_{M*} \CO_M[-k],
    \end{equation}
    and
    \item[(ii)] each higher direct image is given by
    \[
    R^k\pi_{M*} \CO_M \simeq \Omega_B^k.
    \]
\end{enumerate}

In fact, both statements are known to hold for any Lagrangian fibration $\pi_M: M \to B$ beyond the case of integrable systems we consider here. The statement (i) was deduced from Saito's enhancement of the decomposition theorem by passing to graded pieces.  The statement (ii) is a theorem of Matsushita \cite{Ma3} which was proven for any Lagrangian fibration; Matsushita's argument relies on the decomposition theorem (i) as well as Hodge-theoretic and birational-geometric techniques.

For an integrable system $\pi_M: M\to B$, the Fourier--Mukai transform provides an alternative argument, yielding both (i) and (ii) from a single calculation 
\[
Rq_{P*}\CL \simeq 0_{B*}\omega_B[-n], 
\]
which is proven in \cite{AF} essentially from Ng\^o's $\delta$-inequality (Proposition \ref{prop1.3} in \cite{AF}). In particular, the Fourier--Mukai approach suggests that (i) and (ii) are both determined by the topology of the integrable system. \qed
\end{example}

\subsection{The perverse--Hodge symmetry}

As illustrated in the example above, the object $R\pi_{M*} \CO_M \in D^b\mathrm{Coh}(B)$ is governed by the Fourier--Mukai identity
\begin{equation}\label{FM3.4}
\phi_{\mathrm{FM}}(\CO_M) \simeq 0_{B*}\CO_B.
\end{equation}

In this section we discuss a generalization (Conjecture \ref{conj3.4} below) of Example \ref{ex3.3} concerning the objects
\[
R\pi_{M*} \Omega_M^k \in D^b\mathrm{Coh}(B), \quad k=0,1, \cdots, 2n,
\]
where Conjecture \ref{main_conj2} plays the role of (\ref{FM3.4}). This is a version of the proposal \cite{SY2} for the sheaf-theoretic perverse--Hodge symmetry which simultaneously generalizes Matsushita's theorem~(\ref{Mat0}) and categorifies the cohomological ``perverse = Hodge'' identity (\ref{P=F}).

For our purpose here, we view $P_k$ of the decomposition theorem a Hodge module on the base $B$; in particular, we consider its graded object 
\[
\mathrm{gr}(P_k) \in \mathrm{Coh}(T^*B).
\]

\begin{conj}[Categorified perverse--Hodge symmetry]\label{conj3.4}
We have
\[
R\pi_{M*} \Omega_M^k \simeq  ( L0_B^*\, \mathrm{gr}(P_k) )^\vee \in D^b\mathrm{Coh}(B).
\]
\end{conj}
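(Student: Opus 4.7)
The plan is to deduce Conjecture \ref{conj3.4} from the main Conjecture \ref{main_conj2} combined with a Grothendieck--Serre duality argument on the base. By Proposition \ref{prop3.2} applied to $\CK = \Omega_M^k$, we have
\[
L0_B^*\,\phi_{\mathrm{FM}}(\Omega_M^k) \simeq R\pi_{M*}\Omega_M^k \otimes \omega_B^\vee[n].
\]
Conjecture \ref{main_conj2}, after further restricting from $\hat{B}$ to the zero section, identifies the left-hand side with $L0_B^*\,\mathrm{gr}(P_k)$. Combining these, Conjecture \ref{conj3.4} becomes equivalent to the coherent duality identity
\[
\bigl(L0_B^*\,\mathrm{gr}(P_k)\bigr)^\vee \simeq L0_B^*\,\mathrm{gr}(P_k) \otimes \omega_B[-n] \in D^b\mathrm{Coh}(B).
\]

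As a sanity check for $k = 0$: by Corollary \ref{cor3.2}, $\mathrm{gr}(P_0) \simeq 0_{B*}\CO_B$, and the Koszul resolution at the zero section (whose conormal bundle is $T_B$) yields $L0_B^*\,\mathrm{gr}(P_0) \simeq \bigoplus_p \wedge^p T_B[p]$. Using $\wedge^p T_B \otimes \omega_B \simeq \Omega_B^{n-p}$, both sides of the reduced identity compute to $\bigoplus_q \Omega_B^q[-q]$, so that Conjecture \ref{conj3.4} recovers Matsushita's theorem as in Example \ref{ex3.3}. To treat general $k$, I would invoke Saito's theory: since $P_k$ is a direct summand in the decomposition of $R\pi_{M*}\BQ_M[n]$ for the projective map $\pi_M$, it is a polarizable pure Hodge module and hence Verdier self-dual up to a canonical Tate twist. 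Strictness of the Hodge filtration under duality lifts this to a coherent duality statement for $\mathrm{gr}(P_k)$ as a Cohen--Macaulay sheaf of pure dimension $n$ on the smooth $2n$-fold $T^*B$. The intertwining $\BD_B \circ L0_B^* \simeq 0_B^{\,!} \circ \BD_{T^*B}$, combined with $0_B^{\,!}(-) \simeq L0_B^*(-) \otimes \omega_B[-n]$ for the regular embedding $0_B: B \hookrightarrow T^*B$ of codimension $n$, and the triviality of $\omega_{T^*B}$ (as $T^*B$ is symplectic), should transport the duality on $T^*B$ to precisely the identity displayed above on $B$.

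The principal technical obstacle will be the careful bookkeeping of Tate twists, line-bundle twists, and degree shifts through Saito's duality at the graded level, ensuring that the composite twist reduces to exactly $\omega_B[-n]$ after pulling back along the zero section. Moreover, the entire strategy is conditional on the still-open main Conjecture \ref{main_conj2}; a fully unconditional proof of Conjecture \ref{conj3.4} would likely require extending Matsushita's Hodge-theoretic argument from the $k = 0$ case to higher $k$ via relative Hard Lefschetz and the symplectic isomorphism $\Omega_M^k \simeq \Omega_M^{2n-k}$, bypassing the Fourier--Mukai transform altogether.
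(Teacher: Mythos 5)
Your first paragraph correctly reproduces the paper's Remark~3.5(ii): Proposition~\ref{prop3.2} turns the conjectural FM/Decomposition identity (restricted to the zero section) into the relation $L0_B^*\,\mathrm{gr}(P_k) \simeq R\pi_{M*}\Omega_M^k \otimes \omega_B^\vee[n]$, and this combined with duality gives Conjecture~\ref{conj3.4}. But the detour you then take is longer than it needs to be, and it actually loses something. You recast the remaining task as a \emph{self-duality of} $L0_B^*\,\mathrm{gr}(P_k)$ and propose to attack it through Saito's duality for the Hodge module $P_k$, commutation of $\BD$ with $L0_B^*$ and $0_B^!$, and so on. All of that is true, but unnecessary once you already have the displayed isomorphism: just apply $(-)^\vee$ to $R\pi_{M*}\Omega_M^k$ directly. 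Grothendieck--Serre duality for the projective morphism $\pi_M\colon M\to B$, together with $\omega_M\simeq\CO_M$ (symplectic) and $(\Omega_M^k)^\vee\simeq\Omega_M^k$ (via $\sigma$), gives $(R\pi_{M*}\Omega_M^k)^\vee \simeq R\pi_{M*}\Omega_M^k\otimes\omega_B^\vee[n]$ in one line. Substituting yields $(R\pi_{M*}\Omega_M^k)^\vee \simeq L0_B^*\,\mathrm{gr}(P_k)$, which is Conjecture~\ref{conj3.4}. The Hodge-module duality route you sketch is exactly where the ``principal technical obstacle'' you flag (Tate twists, shifts, bookkeeping through Saito duality) lives; by contrast, the one-line Serre-duality argument on $M$ has none of these issues. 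So the right order of operations is: dualize on the coherent side of $M$ where everything is manifest, not on the Hodge-module side of $T^*B$.

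You should also be aware that the paper pursues a second, genuinely different route that your proposal misses. Proposition~\ref{prop0.6} deduces Conjecture~\ref{conj3.4} not from the full Fourier--Mukai conjecture~\ref{main_conj2}, but from the weaker perverse--Hodge conjecture~(\ref{P=F2}) of~[SY2]. The mechanism there is Saito's formula $R\pi_{M*}\mathrm{gr}_{-k}\mathrm{DR}(\BQ_M[2n]) \simeq \bigoplus_i \mathrm{gr}_{-k}\mathrm{DR}(P_i)[n-i]$, which expresses $R\pi_{M*}\Omega_M^k$ purely in terms of the perverse--Hodge complexes $\CG_{i,k}$, together with a Koszul-resolution computation of $L0_B^*\,\mathrm{gr}(P_k)$ in terms of $\CG_{k,i}$; the two sides then match under the symmetry $\CG_{k,i}\simeq\CG_{i,k}$. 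This bypasses the Fourier--Mukai transform entirely and already yields cases of Conjecture~\ref{conj3.4} (e.g.\ Hilbert schemes of points on elliptic surfaces, by~[SY2, Theorem~0.4]), whereas the FM conjecture is open in those cases. Your closing speculation about ``extending Matsushita's Hodge-theoretic argument via relative Hard Lefschetz'' is in the right spirit but points in a vaguer direction than what the paper actually does; the concrete tool is Saito's formula plus the $\CG_{k,i}\simeq\CG_{i,k}$ symmetry.
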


\begin{rmk}\label{remark}
\begin{enumerate}
\item[(i)]Conjecture \ref{conj3.4} is expected to hold for not only integrable systems, but any Lagrangian fibration $\pi_M: M \to B$ with $M$ and $B$ nonsingular. As we show in the proof of Proposition \ref{prop0.6} below, Conjecture \ref{conj3.4} follows from Conjecture \ref{P=F2}. In particular, by \cite[Theorem 0.4]{SY2} we have verified Conjecture \ref{conj3.4} for Lagrangian fibrations induced by the Hilbert scheme of points on a surface that admits an elliptic fibration.

\item[(ii)] For an integrable system, Conjecture \ref{conj3.4} can be rewritten as
\[
L0_{B}^*\,\phi_{\mathrm{FM}}(\Omega^k_B) \simeq L0_B^*\, \mathrm{gr}(P_k)
\]
where we applied Proposition \ref{prop3.2} and the fact that the vector bundle $\Omega_M^k$ is self-dual. Consequently, Conjecture \ref{main_conj2} implies Conjecture \ref{conj3.4}.
\item[(iii)] When $B$ is projective, we recover the identity (\ref{P=F}) from Conjecture \ref{conj3.4} by taking cohomology $H^i(B, -)$. This follows from applying Laumon's formula (\emph{c.f.}~\cite[Theorem~2.4]{PS}) to the projective map $B \to \mathrm{pt}$.
\end{enumerate}
\end{rmk}

The following diagram illustrates the role of the Fourier--Mukai transform:
\begin{equation*}
    \begin{tikzcd}
    \textup{Conj.~\ref{main_conj2}: FM/Decomp.}  \arrow[d, snake it, "\textup{Specialize:~} k=0"] \arrow[rr, dashed, "L0_B^*(-)"] & & \textup{Conj.~\ref{conj3.4}: Categorified Perv.~= Hodge} \arrow[d, snake it, "\textup{Specialize:~} k=0"] \\
      \textup{FM (\ref{FM3.4}) for } \CO_M  \arrow[rr,dashed, "L0_B^*(-)"] & & \textup{Matsushita:~} R^i\pi_{M*}\CO_M\simeq \Omega_B^i.
\end{tikzcd}
\end{equation*}

Next, we complete the proof of Proposition \ref{prop0.6}.

\begin{proof}[Proof of Proposition \ref{prop0.6}]

We relate both sides of Conjecture \ref{conj3.4} to the perverse--Hodge complexes
\[
\CG_{k,i} : = \mathrm{gr}_{-i}\mathrm{DR}(P_{k})[n-k] \in D^b\mathrm{Coh}(B)
\]
of \cite{SY2} and reduce Conjecture \ref{conj3.4} to the symmetry (\ref{P=F2}):
\[
\CG_{k,i} \simeq \CG_{i,k}.
\]

Recall that if we view $P_k$ as a Hodge module on $B$, it carries the structure as a $\CD_B$-module endowed with a good filtration $F_\bullet P_k$. The $i$-th graded piece gives a sheaf of $\CO_B$-module $\mathrm{gr}_i P_k$. We consider the de Rham complex 
\[
\mathrm{DR}(P_k) = [P_k \to P_k\otimes \Omega_B^1 \to \cdots \to P_k \otimes \Omega_B^n][n];
\]
with the induced filtration
\[
F_i\mathrm{DR}(P_k) = [F_iP_k \to F_{i+1}P_k\otimes \Omega_B^1 \to \cdots \to F_{i+n}P_k\otimes \Omega_B^n][n].
\]
The associated graded object of the de Rham complex induces objects 
\begin{equation}\label{dR1}
\mathrm{gr}_i\mathrm{DR}(P_k) =  [\mathrm{gr}_iP_k \to \mathrm{gr}_{i+1}P_k\otimes \Omega_B^1 \to \cdots \to \mathrm{gr}_{i+n}P_k\otimes \Omega_B^n][n]
\end{equation}
taking values in the bounded derived category of coherent sheaves on $B$. In particular, the perverse--Hodge complex $\CG_{k,i}$ encodes the information of the $k$-th and the $i$-th graded pieces of the perverse and the Hodge filtrations respectively, and thus (\ref{P=F2}) is a perverse--Hodge symmetry.

To relate the perverse--Hodge complexes to the more classical object of the left-hand side of Conjecture~\ref{conj3.4}, we evoke here Saito's formula \cite[2.3.7]{S1} (see also \cite[Section 2.2]{SY2}):
\[
R\pi_{M*}\mathrm{gr}_{-k}\mathrm{DR}(\BQ_M[2n]) \simeq \mathrm{gr}_{-k} \mathrm{DR}(R\pi_{M*} \BQ_M[2n]) \simeq \bigoplus_{i} \mathrm{gr}_{-k} \mathrm{DR}(P_i)[n-i]. 
\]
Here $\BQ_M[2n]$ is the Hodge module associated with the perverse sheaf $\BQ_M[2n]$ given by the trivial local system, whose de Rham complex has graded piece 
\[
\mathrm{gr}_{-k}^F \mathrm{DR}(\BQ_M[2n]) = \Omega_M^k[2n-k].
\]
Therefore, we obtain that
\begin{equation*}
R\pi_{M*} \Omega_M^k \simeq \bigoplus_{i} \CG_{i, k} [k-2n];
\end{equation*}
in particular, by the Grothendieck--Verdier duality, 
\begin{equation}\label{LHS}
(R\pi_{M*} \Omega_M^k )^\vee \simeq \left(\bigoplus_{i} \CG_{i,k} \right) \otimes \pi_M^*\omega_B^\vee [k-n].
\end{equation}

On the other hand, we have
\begin{equation}\label{RHS1}
L0_B^*\, \mathrm{gr}(P_k) \simeq Rp_*( \mathrm{gr}(P_k) \otimes^L_{\CO_{T^*B}} \CO_B )
\end{equation}
with $p: T^*B \to B$ the natural projection. By taking the Koszul resolution of the structure sheaf $\CO_B$ of the $0$-section $B \subset T^*B$, we see from the expression (\ref{dR1}) that the right-hand side of (\ref{RHS1}) is
\[
Rp_*( \mathrm{gr}(P_k) \otimes^L_{\CO_{T^*B}} \CO_B ) \simeq \left(\bigoplus_{i} \CG_{k,i}\right) \otimes \pi_M^* \omega_B^\vee[k-n];
\]
it is matched with (\ref{LHS}) through the perverse--Hodge symmetry $\CG_{k,i} \simeq \CG_{i,k}$.

So far we have proved that (\ref{P=F2}) implies Conjecture \ref{conj3.4}. The first part of Proposition \ref{prop0.6} then follows from Remark \ref{remark} (ii), and the second part follows from Remark \ref{remark} (iii).
\end{proof}

\section{Fourier--Mukai II: Lagrangian Cohen--Macaulay sheaves}

\subsection{Overview}
In this section, we prove Theorem \ref{thm0.3} which shows that the object
\[
\phi_{\mathrm{FM}}( \Omega_M^k )|_{\hat{B}_P} 
\]
is a Cohen--Macaulay sheaf supported on the conical Lagrangian $\Lambda'|_{\hat{B}_P}$. This is matched with the singular support of $P_k$ in view of Theorem \ref{gr(P)} and Corollary \ref{cor2.4}.

Our method is built on the ideas of Arinkin \cite{A1} and Arinkin--Fedorov \cite{AF}.

\subsection{Cohen--Macaulay sheaves}

Let $\CF$ be a coherent sheaf on $P$. We say that $\CF$ is \emph{Cohen--Macaulay} if for any closed point $x \in P$, we have
\[
\mathrm{depth}_{\CO_{P,x}}(\CF_x) = \dim \mathrm{supp}(\CF_x).
\]
In particular, $\CF$ is Cohen--Macaulay of pure dimension $d$ if and only if its Verdier dual
\[
R\CH\mathrm{om}(\CF, \omega_P) \in D^b\mathrm{Coh}(P)
\]
is concentrated in degree $d$.

The following theorem is a Cohen--Macaulay criterion for the Fourier--Mukai transform of a locally free sheaf.

\begin{thm}\label{thm4.1}
Let $\CK$ be a locally free sheaf on $M$. Let $Z \subset P$ be a closed subset for which each irreducible component has dimension $\leq n$. If
\[
\mathrm{supp}^{\mathrm{red}}(\phi_{\mathrm{FM}}(\CK)) \subset Z,
\]
then $\phi_{\mathrm{FM}}(\CK)$ is a Cohen--Macaulay sheaf of pure dimension $n$ concentrated in degree 0. 
\end{thm}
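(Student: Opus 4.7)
The plan is to bound the degree range of $\phi_{\mathrm{FM}}(\CK)$ and of its Grothendieck--Verdier dual separately, and then use biduality on the smooth space $P$ to force the complex into a single degree.

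Since $\pi_M$ is flat over the smooth base $B$ and $P\to B$ is smooth, the fibre product $M\times_B P$ is smooth of dimension $3n$ and $q_P:M\times_B P\to P$ is flat and proper of relative dimension $n$. The pushforward $Rq_{P*}$ thus has cohomological amplitude $[0,n]$, giving the a priori bound $\phi_{\mathrm{FM}}(\CK)\in D^{[-n,0]}_{\mathrm{coh}}(P)$ for any coherent $\CK$. On the other hand, each cohomology sheaf $\mathcal{H}^i(\phi_{\mathrm{FM}}(\CK))$ has support contained in $Z$, which has dimension $\leq n$ and hence codimension $\geq n$ in the $2n$-dimensional smooth space $P$; by local duality, $\mathcal{E}xt^p(\mathcal{H}^i(\phi_{\mathrm{FM}}(\CK)),\omega_P)=0$ for $p<n$, and the local-to-global spectral sequence then forces $R\CH om(\phi_{\mathrm{FM}}(\CK),\omega_P)\in D^{\geq n}(P)$.

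Next I would establish a matching upper bound on the dual by an explicit duality calculation. Applying Grothendieck--Verdier duality along the flat proper map $q_P$, using that $q_M^*\CK \otimes \CL^\vee$ is locally free and that the symplectic identity $\omega_M\simeq \CO_M$ gives $\omega_{\pi_M}\simeq \pi_M^*\omega_B^\vee$ and hence $q_P^!\omega_P\simeq q_P^*(\omega_P\otimes \pi_P^*\omega_B^\vee)[n]$, the projection formula yields after cancellations
\[
R\CH om(\phi_{\mathrm{FM}}(\CK),\omega_P)\simeq \omega_P\otimes Rq_{P*}(q_M^*\CK^\vee\otimes \CL).
\]
The normalisation of the Poincar\'e line bundle implies $(\mathrm{id}_M\times\iota)^*\CL\simeq \CL^\vee$ for $\iota:P\to P$ the inversion, and the identities $q_M\circ(\mathrm{id}_M\times\iota)=q_M$ and $q_P\circ(\mathrm{id}_M\times\iota)=\iota\circ q_P$ allow us to rewrite
\[
Rq_{P*}(q_M^*\CK^\vee\otimes\CL) \simeq \iota_*\phi_{\mathrm{FM}}(\CK^\vee)\otimes\pi_P^*\omega_B[-n].
\]
Since $\phi_{\mathrm{FM}}(\CK^\vee)\in D^{[-n,0]}$ by the same amplitude argument applied to $\CK^\vee$, the $[-n]$-shift places the Verdier dual in $D^{[0,n]}(P)$.

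Combining the two bounds, $R\CH om(\phi_{\mathrm{FM}}(\CK),\omega_P)$ is a coherent sheaf concentrated in degree exactly $n$; call it $\mathcal{G}$ placed in that degree. Biduality on smooth $P$ then gives $\phi_{\mathrm{FM}}(\CK)\simeq R\CH om(\mathcal{G},\omega_P)[n]$, and since $\mathrm{supp}(\mathcal{G})=\mathrm{supp}(\phi_{\mathrm{FM}}(\CK))$ still has dimension $\leq n$, the same codimension--depth estimate applied to $\mathcal{G}$ shows $R\CH om(\mathcal{G},\omega_P)\in D^{\geq n}(P)$, so the shifted complex lies in $D^{\geq 0}(P)$. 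Together with the a priori bound $\phi_{\mathrm{FM}}(\CK)\in D^{\leq 0}(P)$ this forces $\phi_{\mathrm{FM}}(\CK)$ to be a coherent sheaf in degree $0$, and the fact that its Verdier dual is concentrated in degree $n$ is precisely the Cohen--Macaulay condition of pure dimension $n$. The main obstacle is the duality computation of the second paragraph: it pivots on identifying $q_P^!\omega_P$ via the triviality of $\omega_M$ and on exploiting the inversion symmetry of the normalised Poincar\'e bundle to re-express the dual as (a twist and shift of) another Fourier--Mukai transform, essentially generalising the single-sheaf calculation of Arinkin--Fedorov for $\CK=\CO_M$ to the general locally free setting.
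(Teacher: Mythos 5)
Your proposal is correct and follows essentially the same route as the paper: establish the a priori amplitude $[-n,0]$, identify the Grothendieck--Verdier dual $R\CH\mathrm{om}(\phi_{\mathrm{FM}}(\CK),\omega_P)$ via duality along $q_P$ as $Rq_{P*}$ of a locally free sheaf twisted by a line bundle (hence in $[0,n]$), and combine with the dimension bound on the support. The one place you diverge is the final step: the paper simply invokes \cite[Lemma 7.7]{A2} to pass from ``(i) amplitude $[-n,0]$, (ii) dual amplitude $[0,n]$, (iii) support $\leq n$'' to ``CM sheaf of dimension $n$'', whereas you unfold that lemma explicitly via the codimension/depth estimate $R\CH om(\CF,\omega_P)\in D^{\geq n}$ for $\CF\in D^{[-n,0]}$ supported in codimension $\geq n$, and then biduality. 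Your extra observation that the dual is, up to a twist and shift, $\iota^*\phi_{\mathrm{FM}}(\CK^\vee)$ is correct but more than is needed: once the dual is expressed as $Rq_{P*}$ of a locally free sheaf (equivalently $\omega_P\otimes Rq_{P*}(q_M^*\CK^\vee\otimes\CL)$), the amplitude $[0,n]$ is immediate without invoking the inversion symmetry of $\CL$. In short, same argument, with the cited lemma made self-contained.
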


\begin{proof}
By definition the object 
\[
\phi_{\mathrm{FM}}(\CK) = R{q_{P*}}(q_M^* \CK \otimes \CL^\vee) \otimes \pi_P^*\omega_B^\vee [n] \in D^b\mathrm{Coh}(P)
\]
is concentrated in degrees $[-n,0]$. Concerning its Verdier dual, we have
\begin{align*}
R\CH\mathrm{om}(\phi_{\mathrm{FM}}(\CK), \omega_P) & \simeq R\CH\mathrm{om}( Rq_{P*}(q_M^*\CK \otimes \CL^\vee) \otimes \pi_P^*\omega_B^\vee [n], \omega_P ) \\
& \simeq Rq_{P*} R\CH\mathrm{om}(q_M^*\CK \otimes \CL^\vee, \omega_{q_P})\\
& \simeq Rq_{P*} (q_M^*\CK^\vee \otimes \CL \otimes \omega_{q_P} ).
\end{align*}
Here the second isomorphism follows from Grothendieck--Verdier duality. Since $\CK$ is locally free on $M$, we have that $q_M^*\CK^\vee \otimes \CL \otimes \omega_{q_P}$ is a (locally free) sheaf on $M\times_BP$. Therefore the object 
\[
R\CH\mathrm{om}(\phi_{\mathrm{FM}}(\CK), \omega_P) \in D^b \mathrm{Coh}(P)
\]
is concentrated in degrees $[0,n]$.

In conclusion, we know that:
\begin{enumerate}
    \item[(i)] $\phi_{\mathrm{FM}}(\CK)$ is concentrated in $[-n,0]$,
    \item[(ii)] its Verdier dual $R\CH\mathrm{om}(\phi_{\mathrm{FM}}(\CK), \omega_P)$ is concentrated in $[0,n]$, and
    \item[(iii)] both objects have supports of dimension $\leq n$.
\end{enumerate}
The only way for this to happen is that $\phi_{\mathrm{FM}}(\CK)$ is a sheaf concentrated in degree 0 of pure dimension $n$, and its Verdier dual is concentrated in degree $n$ (see \cite[Lemma 7.7]{A2}); in other words, $\phi_{\mathrm{FM}}(\CK)$ is a Cohen--Macaulay sheaf of pure dimension $n$ (concentrated in degree 0).
\end{proof}

\subsection{Proof of Theorem \ref{thm0.3}}

\begin{prop}\label{prop4.2}
Assume that $\CK\in D^b\mathrm{Coh}(M)$ underlies an $A$-equivariant bounded complex on $M$. Then we have
\begin{enumerate}
    \item[(i)] the support of $\phi_{\mathrm{FM}}(\CK)$ has dimension $\leq n$, and
    \item[(ii)] its restriction to $\hat{B} \subset P$ is contained in $\Delta_P^{\mathrm{aff}}$; \emph{i.e.},
    \[
\mathrm{supp}^{\mathrm{red}}( \phi_{\mathrm{FM}}(\CK))|_{\hat{B}} \subset \Delta_P^{\mathrm{aff}}|_{\hat{B}}.
\]
\end{enumerate}
\end{prop}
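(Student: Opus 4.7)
The plan is to reduce to a fiberwise support claim over $B$, then invoke Ng\^o's $\delta$-inequality. By proper base change along $\pi_P: P \to B$, for each closed point $b \in B$ the restriction $\phi_{\mathrm{FM}}(\CK)|_{P_b}$ is computed as a classical Fourier--Mukai transform on the single fiber $M_b$, taking $\CK_b := \CK|_{M_b}$ against the line bundle $\CL|_{M_b \times P_b}$. The $A$-equivariance of $\CK$ yields $A_b$-equivariance of $\CK_b$. The core fiberwise claim I would prove is
\[
\mathrm{supp}^{\mathrm{red}}\bigl(\phi_{\mathrm{FM}}(\CK)|_{P_b}\bigr) \subset R_b \subset P_b,
\]
where $R_b$ is the affine part in the Chevalley decomposition $1 \to R_b \to P_b \to H_b \to 1$. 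Granted this, both conclusions follow at once: one has $\mathrm{supp}(\phi_{\mathrm{FM}}(\CK)) \subset \bigcup_b R_b = \Delta_P^{\mathrm{aff}}$, proving (ii) after restriction to $\hat{B}$; and (i) follows from Ng\^o's inequality (Proposition \ref{prop1.3}), since the stratum $\{\delta \geq i\} \subset B$ has codimension $\geq i$ while the fibers over it have dimension $\leq i$, bounding the total dimension by $(n - i) + i = n$.

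For the fiberwise claim, the $A_b$-equivariance trivializes $\CK_b$ on the smooth locus: on the open orbit $A_b = M_b^{\mathrm{sm}} \subset M_b$, we have $\CK_b|_{A_b} \simeq V_b \otimes \CO_{A_b}$ with $V_b := s_B^*\CK|_b$, since $A_b$ acts simply transitively on itself. I would then exploit the interaction of $\CL$ with the biextension pairing coming from $\kappa_\Theta: A \to P$. For $\xi \in P_b \setminus R_b$, the image of $\xi$ in the abelian quotient $H_b$ is nontrivial, so the pullback of $\CL_\xi$ to $A_b$ is a line bundle mapping nontrivially to the dual of $H_b$. A Mukai-type vanishing on $H_b$ --- nontrivial degree-zero line bundles on an abelian variety have vanishing cohomology --- combined with $A_b$-equivariance then forces $H^*(M_b, \CK_b \otimes \CL_\xi) = 0$. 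The Arinkin--Fedorov calculation $\phi_{\mathrm{FM}}(\CO_M) \simeq 0_{B*}\CO_B$ of Example \ref{ex3.3} provides the special case $\CK = \CO_M$.

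The main obstacle is controlling the contribution of the boundary $M_b \setminus A_b$, where the equivariant trivialization breaks down and the Mukai vanishing does not apply directly. To handle this, one would adapt the biextension techniques of Arinkin \cite{A1} and Arinkin--Fedorov \cite{AF}, using a cube-theorem identity of the form $\mu^*\CL \simeq q_{MP}^*\CL \otimes q_{AP}^*\CP$ on $A \times_B M \times_B P$ --- for a suitable Poincar\'e line bundle $\CP$ on $A \times_B P$ pulled back via $\kappa_\Theta$ --- to directly match $A$-equivariance of $\CK$ with the desired twisted equivariance of $\phi_{\mathrm{FM}}(\CK)$ on $P$, forcing the support to lie in the $\kappa_\Theta$-invariant affine locus. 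This matching is cleanest in the formal neighborhood $\hat{B}_P$, where by Proposition \ref{prop1.4} the \'etale map $\kappa_\Theta$ becomes an isomorphism, which explains why (ii) is naturally formulated there.
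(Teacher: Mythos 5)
Your proposal correctly identifies the broad strategy — fiberwise analysis via base change, exploiting $A_b$-equivariance, then applying Ng\^o's $\delta$-inequality (Proposition \ref{prop1.3}) to conclude. The reduction to a fiberwise statement and the dimension count for part (i) match the paper. However, the core fiberwise mechanism you propose has a gap that the paper avoids with a different and cleaner argument.

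Two issues. First, your fiberwise claim $\mathrm{supp}^{\mathrm{red}}(\phi_{\mathrm{FM}}(\CK)|_{P_b}) \subset R_b$ is too strong. The paper only establishes that the support lies in the \emph{cohomology support locus} $K_b = \{l \in \mathrm{Pic}^0(M_b) : l|_{A_b} \simeq \CO_{A_b}\}$, a quasi-subgroup whose \emph{identity component} $K_b^0$ is affine (hence $\dim K_b \leq \delta(b)$, which is what the dimension count needs). Non-identity components of $K_b$ are translates of $K_b^0$ and need not lie in $R_b$. This is precisely why part (ii) is stated only after restriction to $\hat{B}$, which sees only the identity component.

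Second, and more fundamentally, your mechanism for the fiberwise claim --- trivialize $\CK_b$ on the open orbit $A_b$ and invoke a Mukai-type vanishing on the abelian quotient $H_b$ --- runs into the boundary problem you yourself flag: the contribution of $M_b \setminus A_b$ is uncontrolled, and the restriction-to-$A_b$ argument does not see it. You propose to patch this with biextension identities, but this does not straightforwardly close the gap, since global cohomology on the singular fiber $M_b$ cannot be computed on the open orbit alone. The paper's proof sidesteps the boundary entirely: if $l$ lies in the support, then $H^j(M_b, \CK_b \otimes l_b^\vee) \neq 0$ for some $j$, and the commutative group extension $1 \to \BC^* \to \widetilde{A}_b \to A_b \to 1$ from \cite[Proposition 5.6]{AF} (which packages the $\BC^*$-torsor $l_b^\vee|_{A_b}$) acts on this \emph{global} cohomology via the $A_b$-equivariance of $\CK_b$. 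Since $\widetilde{A}_b$ is commutative, a 1-dimensional subrepresentation exists, giving a character that restricts to $\mathrm{id}_{\BC^*}$, which splits the extension and therefore forces $l_b|_{A_b} \simeq \CO_{A_b}$, i.e.\ $l \in K_b$. No Mukai vanishing, no control of the boundary, and no trivialization of $\CK_b$ on $A_b$ is needed: equivariance on all of $M_b$ plus the representation theory of commutative groups does all the work. This is the key idea missing from your outline.
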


\begin{proof}
The proof essentially follows from the discussion in \cite[Sections 4 and 5]{AF}; we present it here for the reader's convenience.

Let $l \in P$ be a line bundle on $M$ with 
\[
 l \in \mathrm{supp}^{\mathrm{red}}(\phi_{\mathrm{FM}}(\CK)) \subset P.
\]
We write $b: = \pi_P(l) \in B$, and denote by $A_b, M_b, P_b$ the restrictions of $A, M, P$ over the closed point $b$ respectively. The (derived) restriction of the $A$-equivariant object $\CK$ on $M$ gives an $A_b$-equivariant object $\CK_b \in D^b\mathrm{Coh}(M_b)$.\\

\noindent {\bf Step 1.} By base change, we know that 
\begin{equation}\label{Step1}
H^j(M_b, \CK_b \otimes l_b^\vee) \neq 0
\end{equation}
for some $j \in \BZ$. The restriction of the line bundle $l_b^\vee$ to $A_b \subset M_b$ corresponds to a $\BC^*$-torsor over $A_b$. By \cite[Proposition 5.6]{AF} this $\BC^*$-torsor, denoted by $\widetilde{A}_b$, is actually a commutative group scheme given by the extension
\begin{equation}\label{step1_1}
1 \to \BC^* \to \widetilde{A}_b \to A_b \to 1.
\end{equation}
It acts on the total space of $l_b^\vee$ such that the subtorus $\BC^* \subset \widetilde{A}$ acts tautologically by dilations. Since $\CK_b$ is $A_b$-equivariant, this gives rise to a natural $\widetilde{A}_b$-action on the vector space (\ref{Step1}).\\

\noindent{\bf Step 2.} Since $\widetilde{A}_b$ is commutative, we may find a 1-dimensional sub-representation
\[
V \subset H^j(M_b, \CK_b \otimes l_b^\vee),
\]
which induces a character 
\[
\chi: \widetilde{A}_b \rightarrow \BC^*, \quad \chi|_{\BC^* \subset \widetilde{A}_b } = \mathrm{id}_{\BC^*}.
\]
It splits the extension (\ref{step1_1}). In particular, the restriction of $l_b^\vee$ to $A_b \subset M_b$ is a trivial line bundle; equivalently, the restriction of $l_b$ to $A_b$ is trivial.\\

\noindent{\bf Step 3.} Now we focus on the \emph{cohomology support locus}
\[
K_b = \{l \in \mathrm{Pic}^0(M_b)\,|\, l|_{A_b} \simeq \CO_{A_b}\} \subset \mathrm{Pic}^0(M_b);
\]
it is a quasi-subgroup in the sense of \cite[Definition 4.11]{AF}, whose identity component $K_b^0 \subset K_b$ is affine by \cite[Corollary 4.6]{AF}. In particular, we have 
\[
\dim K_b \leq \delta(b).
\]
By Steps 1 and 2, we know that 
\[
\mathrm{supp}^{\mathrm{red}}(\phi_{\mathrm{FM}}(\CK)) \subset \bigcup_{b\in B} K_b 
\]
where the latter $K:= \bigcup_{b\in B} K_b$ is a countable union of constructible sets satisfying:
\[
\dim K  \leq \dim \Delta_P^{\mathrm{aff}} \leq n, \quad
    K|_{\hat{B}} \subset \Delta_P^{\mathrm{aff}}|_{\hat{B}}.
\]
The proposition is concluded.
\end{proof}

We complete in the following the proof of Theorem \ref{thm0.3}. Note that in general the locally free sheaves $\Omega_M^k$ are not $A$-equivariant as illustrated in the example below.

\begin{example}
Let $\pi: M \to B$ be a smooth  elliptic fibration with a section, and let $E \subset M$ be a fiber. In this case the group scheme $A$ is identical to $M$ itself. If the tangent bundle tangent bundle $T_M$ is $A$-equivariant, then it is obtained as the pullback of a vector bundle from the base $B$; in particular, the short exact sequence
\[
0 \to T_E \to T_M|_E \to N_{M/S} \to 0
\]
splits. This implies that the boundary map of the long exact sequence 
\[
H^0(E, N_{E/M}) \to H^1(E, T_E)
\]
vanishes. On the other hand, this is exactly the Kodaira--Spencer map which is nontrivial when the fibration is not isotrivial.
\end{example}

Therefore, we are not allowed to apply Proposition \ref{prop4.2} to the locally free sheaf $\CK = \Omega_M^k$ directly. To solve this issue, we need the following lemma.

\begin{lem}\label{lem4.4}
Let
\[
0 \to \CE_1 \to \CE_2 \to \CE_3 \to 0
\]
be a short exact sequence in $\mathrm{Coh}(M)$. Then associated with the derived exterior power $\wedge^k\CE_2 \in D^b\mathrm{Coh}(M)$, we have finitely many objects in $D^b\mathrm{Coh}(M)$
\[
\CG_0, \cdots, \CG_i,\CG_{i+1}, \cdots, \CG_k = \wedge^k\CE_2
\]
with morphisms $\CG_{i-1} \to \CG_{i}$ fitting into the exact triangles 
\[
\CG_{i-1} \to \CG_i \to \wedge^{k-i} \CE_1 \otimes^L \wedge^i\CE_3 \xrightarrow{+1} \CG_{i-1}[1].
\]
\end{lem}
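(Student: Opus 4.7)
The plan is to prove a derived refinement of the classical fact that a short exact sequence of locally free sheaves $0 \to \CE_1 \to \CE_2 \to \CE_3 \to 0$ induces a natural filtration on $\wedge^k \CE_2$ with graded pieces $\wedge^{k-i} \CE_1 \otimes \wedge^i \CE_3$. I would proceed in two stages: first establish the case where all three terms are locally free, and then bootstrap to the derived statement by resolving with locally free sheaves.

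In the locally free case, the inclusion $\CE_1 \hookrightarrow \CE_2$ induces multiplication maps $m_i : \wedge^{k-i} \CE_1 \otimes \wedge^i \CE_2 \to \wedge^k \CE_2$, whose images define a decreasing filtration
\[
\wedge^k \CE_1 = F^k \subset F^{k-1} \subset \dots \subset F^0 = \wedge^k \CE_2.
\]
A local computation, using any splitting of the short exact sequence, identifies each graded piece $F^{k-i}/F^{k-i+1}$ canonically with $\wedge^{k-i}\CE_1 \otimes \wedge^i \CE_3$, and this identification is independent of the splitting, so the construction globalizes. Setting $\CG_i := F^{k-i}$ then produces short exact sequences $0 \to \CG_{i-1} \to \CG_i \to \wedge^{k-i}\CE_1 \otimes \wedge^i \CE_3 \to 0$, which are exact triangles in the derived category.

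For the general statement, I would replace the given short exact sequence by a short exact sequence of bounded complexes of locally free sheaves. Since $M$ is nonsingular, bounded locally free resolutions $\CE_1^\bullet \to \CE_1$ and $\CE_3^\bullet \to \CE_3$ exist, and the horseshoe lemma produces a compatible resolution of $\CE_2$ fitting into $0 \to \CE_1^\bullet \to \CE_2^\bullet \to \CE_3^\bullet \to 0$. In characteristic zero the derived exterior power of such a complex is computed as the $S_k$-sign-antisymmetrization of the (now ordinary) tensor power, and applying the locally free construction termwise yields a filtration on $\wedge^k \CE_2^\bullet$ whose successive quotients represent $\wedge^{k-i} \CE_1 \otimes^L \wedge^i \CE_3$ in $D^b\mathrm{Coh}(M)$.

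The main obstacle is organizing the derived calculation so that the locally free filtration passes compatibly through the resolution; equivalently, that the $S_k$-antisymmetrization of the filtered tensor power of the resolution is well-defined as a filtered complex and produces the claimed graded pieces after passage to the derived category. This bookkeeping is standard once one works with explicit Koszul-style models in characteristic zero, but it requires some care to verify independence (up to quasi-isomorphism) from the choice of resolution, so that the triangles $\CG_{i-1} \to \CG_i \to \wedge^{k-i}\CE_1 \otimes^L \wedge^i\CE_3 \xrightarrow{+1}$ are intrinsically defined.
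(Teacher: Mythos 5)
Your approach is essentially the same as the paper's, which simply defers to the proof of \cite[Corollary 2.2]{Lich} ``via taking compatible locally free resolutions of the short exact sequence'' --- exactly the strategy you spell out (degreewise-split resolution, filter the super-exterior power of the middle complex by the number of $\CE_1$-factors, read off the graded pieces). Two small remarks. First, invoking the horseshoe lemma verbatim is slightly off: locally free sheaves on a non-affine $M$ are not projective in $\mathrm{Coh}(M)$, so a given surjection $E_0^{(3)} \twoheadrightarrow \CE_3$ need not lift along $\CE_2 \to \CE_3$; the obstruction lives in $\Ext^1(E_0^{(3)}, \CE_1)$. The standard fix, available here since $M$ is quasi-projective, is to take $E_0^{(3)} = H^0(M,\CE_3(n)) \otimes \CO(-n)$ with $n \gg 0$, so that Serre vanishing kills $H^1(M,\CE_1(n))$ and the lift exists; iterating and using that $M$ is nonsingular gives the compatible finite locally free resolution of all three terms. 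Second, the closing worry about independence from the choice of resolution is unnecessary for Lemma \ref{lem4.4} as stated: it asserts only the \emph{existence} of objects $\CG_i$ and the triangles, with no naturality claim, so a single choice of compatible resolution suffices.
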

\begin{proof}
This follows from the proof of \cite[Corollary 2.2]{Lich} via taking compatible locally free resolutions of the short exact sequence $0\to \CE_1 \to \CE_2 \to \CE_3 \to 0$.
\end{proof}

\begin{proof}[Proof of Theorem \ref{thm0.3}]
We first note that it suffices to prove
\begin{equation}\label{main1}
\mathrm{supp}^{\mathrm{red}}(\phi_{\mathrm{FM}}(\Omega_M^k)) \subset K
\end{equation}
with $K = \bigcup_{b\in B}K_p \subset P$ given in the proof of Proposition \ref{prop4.2}. In fact, (\ref{main1}) implies that \[
\dim \mathrm{supp}^{\mathrm{red}}(\phi_{\mathrm{FM}}(\Omega_M^k)) \leq n.
\]
Therefore, by Theorem \ref{thm4.1} the object $\phi_{\mathrm{FM}}(\Omega_M^k)$ is a Cohen--Macaulay sheaf of dimension $n$ concentrated in degree 0. Furthermore, since
\[
\phi_{\mathrm{FM}}(\Omega_M^k)|_{\hat{B}} \subset K|_{\hat{B}} \subset \Delta^{\mathrm{aff}}_P|_{\hat{B}},
\]
the Cohen--Macaulay property ensures that the support is contributed by the purely codimension $i$ irreducible components of $\Delta^i(\pi_M)$ (see Proposition \ref{prop2.3}), \emph{i.e.}, 
\[
\phi_{\mathrm{FM}}(\Omega_M^k)|_{\hat{B}}  \subset \Lambda'|_{\hat{B}}.
\]

Now it remains to prove (\ref{main1}). We consider the short exact sequence
\begin{equation*}
0 \to \pi_M^* \Omega_B^1 \to \Omega_M^1 \to \Omega^1_{M/B} \to 0.
\end{equation*}
Here both the first and the third terms are $A$-equivariant coherent sheaves. In view of Lemma~\ref{lem4.4}, the vector bundle $\Omega_M^k = \wedge^k \Omega_M^1$ can be expressed in terms of an iterated extension via exact triangles of objects of the form
\begin{equation}\label{main2}
\pi_M^* \Omega_B^{k-i} \otimes \wedge^i \Omega^1_{M/B} \in D^b\mathrm{Coh}(M),
\end{equation}
where $\wedge^i(-)$ stands for the derived exterior power as in Lemma \ref{lem4.4}. Since each term (\ref{main2}) is $A$-equivariant, its Fourier--Mukai transform has reduced support lying in $K$ by Proposition~\ref{prop4.2}. Hence we obtain that the reduced support of the Fourier--Mukai transform of $\Omega_M^k$ also lies in~$K$. This complete the proof.
\end{proof}

\section{Smooth fibrations}\label{smooth}

\subsection{Overview}
In this section, we verify Conjecture \ref{main_conj2} for smooth Lagrangian fibrations which proves Theorem \ref{thm0.4}. Let $(M, \sigma)$ be a nonsingular holomorphic symplectic variety carrying a \emph{smooth} Lagrangian fibration $\pi_M : M \to B$ with a section $s_B: B \to M$. Let $\pi_P: P \to B$ be the relative Picard scheme. As before, we assume that 
\[
\dim M = 2 \dim B = 2n.
\]

\subsection{The $k = 1$ case}\label{Sec5.2}
We first treat the $k = 1$ case; our strategy is the following:
\begin{enumerate}
\item[(i)] We express both $\phi_{\mathrm{FM}}(\Omega^1_M)$ and $\mathrm{gr}(P_1)$ as an extension of sheaves that are (scheme-theoretically) supported on the $0$-sections of $P$ and $T^*B$. The individual terms are pairwise matched via the symplectic form $\sigma$ and the $\pi_M$-relatively ample line bundle~$\Theta$.

\item[(ii)] We match the extensions by means of the Gauss--Manin connection. An important property of smooth Lagrangian fibrations, known as the Donagi--Markman cubic condition (see Lemma \ref{dm}), is crucial in the matching.
\end{enumerate}

Consider the (co)tangent sequence for the smooth morphism $\pi_M : M \to B$. We have a commutative diagram
\begin{equation}\label{eq:cotan}
\begin{tikzcd}
0 \arrow{r}{} & T_{M/B} \arrow{r}{} \arrow{d}{\sigma}[swap]{\simeq} & T_M \arrow{r}{} \arrow{d}{\sigma}[swap]{\simeq} & \pi_M^*T_B \arrow{r}{} \arrow{d}{\sigma}[swap]{\simeq} & 0 \\
0 \arrow{r}{} & \pi_M^*\Omega^1_B \arrow{r}{} & \Omega^1_M \arrow{r}{} & \Omega^1_{M/B} \arrow{r}{} & 0
\end{tikzcd}
\end{equation}
where the vertical arrows are isomorphisms induced by the symplectic form $\sigma$. Since $\Omega^1_{M/B}$ is pulled back from $B$, the natural morphism
\[
\pi_M^*\pi_{M*}\Omega^1_{M/B} \to \Omega^1_{M/B}
\]
is an isomorphism. Then, substituting $\Omega^1_{M/B}$ by $\pi_M^*\pi_{M*}\Omega^1_{M/B}$ in \eqref{eq:cotan} and applying the Fourier--Mukai functor $\phi_{\mathrm{FM}}$ as in Proposition \ref{prop3.1}, we obtain a short exact sequence
\begin{equation} \label{eq:Omega1}
0 \to 0_{B*}\Omega^1_B \to \phi_{\mathrm{FM}}(\Omega^1_M) \to 0_{B*}\pi_{M*}\Omega^1_{M/B} \to 0
\end{equation}
in $\mathrm{Coh}(P)$.

On the other side, the Hodge module $P_1$ corresponds to the variation of Hodge structures on~$R^1\pi_{M*}\mathbb{Q}_M$. We set $\CV_1 = R^1\pi_{M*}\mathbb{Q}_M \otimes_\BQ \CO_B$ with grades pieces $\CV^{i, 1-i} = \mathrm{gr}^i\CV_1$. The associated graded~$\mathrm{gr}(P_1)$ may be viewed either as a coherent sheaf on~$T^*B$ whose reduced support is the $0$-section, or as a Higgs bundle on $B$. The Higgs bundle is an extension of~\mbox{$\CV^{1, 0} \simeq \pi_{M*}\Omega^1_{M/B}$} by $\CV^{0, 1} \simeq R^1\pi_{M*}\CO_M$ (both with trivial Higgs field); in terms of coherent sheaves this gives a short exact sequence
\begin{equation} \label{eq:P1}
0 \to 0_{B*}R^1\pi_{M*}\CO_M \to \mathrm{gr}(P_1) \to 0_{B*}\pi_{M*}\Omega^1_{M/B} \to 0
\end{equation}
in $\mathrm{Coh}(T^*B)$.

There is a duality between~$R^1\pi_{M*}\CO_M$ and~$\pi_{M*}\Omega^1_{M/B}$ depending on a $\pi_M$-relatively ample line bundle $\Theta$. This, together with the isomorphism $T_B \simeq \pi_{M*}\Omega^1_{M/B}$ induced by~$\sigma$, yields an isomorphism
\begin{equation} \label{eq:r1pi}
R^1\pi_{M*}\CO_M \simeq \Omega^1_B.
\end{equation}
Comparing the two sequences \eqref{eq:Omega1} and \eqref{eq:P1}, we find that both terms on the left (resp.~right) are (scheme-theoretically) supported on the $0$-sections and match each other. It then remains to match the extensions, which is key to the proof of the $k = 1$ case.

\subsection{Identifying extensions}\label{Sec5.3}
As the terms on the sides of \eqref{eq:Omega1} and \eqref{eq:P1} are supported on the $0$-sections of $P$ and $T^*B$, their extensions only concern the formal neighborhoods of $B$ in~$P$ and $T^*B$. In particular, the isomorphism $\hat{\kappa}_\Theta$ of Proposition \ref{prop1.4} together with \eqref{eq:r1pi} induces an isomorphism of the extension groups
\begin{equation} \label{eq:extgp}
\mathrm{Hom}_{P}(0_{B*}\pi_{M*}\Omega^1_{M/B}, 0_{B*}\Omega^1_B[1]) \simeq \mathrm{Hom}_{T^*B}(0_{B*}\pi_{M*}\Omega^1_{M/B}, 0_{B*}R^1\pi_{M*}\CO_M[1])
\end{equation}

The identification \eqref{eq:extgp} can be expressed in concrete terms. In fact, we have by adjunction
\begin{align}
\phantom{\simeq{}} & \mathrm{Hom}_P(0_{B*}\pi_{M*}\Omega^1_{M/B}, 0_{B*}\Omega^1_B[1]) \nonumber \\
\simeq{} & \mathrm{Hom}_B(L0_B^*0_{B*}\pi_{M*}\Omega^1_{M/B}, \Omega^1_B[1]) \nonumber \\
\simeq{} & \mathrm{Hom}_B\left(\bigoplus_{i = 0}^n\wedge^iN_{B/P}^{\vee} \otimes \pi_{M*}\Omega^1_{M/B}[i], \Omega^1_B[1]\right) \nonumber \\
\simeq{} & \mathrm{Hom}_B(\pi_{M*}\Omega^1_{M/B}, \Omega^1_B[1]) \oplus \mathrm{Hom}_B(N_{B/P}^{\vee}[1] \otimes \pi_{M*}\Omega^1_{M/B}, \Omega^1_B[1]) \nonumber \\
\simeq{} & \mathrm{Hom}_B(\pi_{M*}\Omega^1_{M/B}, \Omega^1_B[1]) \oplus \mathrm{Hom}_B(\pi_{M*}\Omega^1_{M/B}, N_{B/P} \otimes \Omega^1_B) \label{eq:split1}
\end{align}
and respectively
\begin{align}
\phantom{\simeq{}} & \mathrm{Hom}_{T^*B}(0_{B*}\pi_{M*}\Omega^1_{M/B}, 0_{B*}R^1\pi_{M*}\CO_M[1]) \nonumber \\
\simeq{} & \mathrm{Hom}_B(L0_B^*0_{B*}\pi_{M*}\Omega^1_{M/B}, R^1\pi_{M*}\CO_M[1]) \nonumber \\
\simeq{} & \mathrm{Hom}_B\left(\bigoplus_{i = 0}^n\wedge^iT_B \otimes \pi_{M*}\Omega^1_{M/B}[i], R^1\pi_{M*}\CO_M[1]\right) \nonumber \\
\simeq{} & \mathrm{Hom}_B(\pi_{M*}\Omega^1_{M/B}, R^1\pi_{M*}\CO_M[1]) \oplus \mathrm{Hom}_B(T_B \otimes \pi_{M*}\Omega^1_{M/B}[1], R^1\pi_{M*}\CO_M[1]) \nonumber \\
\simeq{} & \mathrm{Hom}_B(\pi_{M*}\Omega^1_{M/B}, R^1\pi_{M*}\CO_M[1]) \oplus \mathrm{Hom}_B(\pi_{M*}\Omega^1_{M/B}, \Omega^1_B \otimes R^1\pi_{M*}\CO_M). \label{eq:split2}
\end{align}
Here we have used the derived self-intersection \eqref{eq:self} for $B \hookrightarrow P$ and the Koszul resolution for~$B \hookrightarrow T^*B$. The symplectic form $\sigma$ and the $\pi_M$-relatively ample line bundle $\Theta$ induce an isomorphism of the normal bundles
\[N_{B/P} \simeq \Omega_B^1.\]
This, together with the isomorphism \eqref{eq:r1pi}, identifies the summands of \eqref{eq:split1} and \eqref{eq:split2}.

The decompositions \eqref{eq:split1} and \eqref{eq:split2} are a priori not canonical. Yet by the natural truncations~$\tau_{\geq j}$ of $L0_B^*\CO_B$, we have canonical short exact sequences
\begin{multline*}
0 \to \mathrm{Hom}_B(\pi_{M*}\Omega^1_{M/B}, \Omega^1_B[1]) \to \mathrm{Hom}_P(0_{B*}\pi_{M*}\Omega^1_{M/B}, 0_{B*}\Omega^1_B[1]) \\
\to \mathrm{Hom}_B(\pi_{M*}\Omega^1_{M/B}, N_{B/P} \otimes \Omega^1_B) \to 0
\end{multline*}
and
\begin{multline*}
0 \to \mathrm{Hom}_B(\pi_{M*}\Omega^1_{M/B}, R^1\pi_{M*}\CO_M[1]) \to \mathrm{Hom}_{T^*B}(0_{B*}\pi_{M*}\Omega^1_{M/B}, 0_{B*}R^1\pi_{M*}\CO_M[1]) \\
\to \mathrm{Hom}_B(\pi_{M*}\Omega^1_{M/B}, \Omega^1_B \otimes R^1\pi_{M*}\CO_M) \to 0.
\end{multline*}
Moreover, both sequences admit a canonical splitting by pushing forward via $\pi_P: P \to B$ and~$p: T^*B \to B$, \emph{i.e.},
\[
R\pi_{P*}: \mathrm{Hom}_P(0_{B*}\pi_{M*}\Omega^1_{M/B}, 0_{B*}\Omega^1_B[1]) \to \mathrm{Hom}_B(\pi_{M*}\Omega^1_{M/B}, \Omega^1_B[1]),
\]
and
\[
Rp_*: \mathrm{Hom}_{T^*B}(0_{B*}\pi_{M*}\Omega^1_{M/B}, 0_{B*}R^1\pi_{M*}\CO_M[1]) \to \mathrm{Hom}_B(\pi_{M*}\Omega^1_{M/B}, R^1\pi_{M*}\CO_M[1]).
\]
We conclude that \eqref{eq:split1} and \eqref{eq:split2} are canonical decompositions with pairwise identifiable summands compatible with \eqref{eq:extgp}.

Let $\epsilon_{\Omega^1_M}$ and $\epsilon_{P_1}$ denote the respective extension classes of \eqref{eq:Omega1} and \eqref{eq:P1}.

\begin{lem}
We have $R\pi_{P*}\epsilon_{\Omega^1_M} = 0$ and $Rp_*\epsilon_{P_1} = 0$.
\end{lem}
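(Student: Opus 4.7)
The plan is to prove both vanishings independently by computing $R\pi_{P*}$ of the sequence \eqref{eq:Omega1} on $P$ and $Rp_*$ of \eqref{eq:P1} on $T^*B$, and in each case showing that the resulting triangle on $B$ is canonically split. The first case will exploit the section $s_B\colon B \to M$; the second uses that $p\colon T^*B\to B$ is affine together with the $\CO_B$-module splitting of the associated graded of the VHS.

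For the first vanishing, I would begin by observing that \eqref{eq:Omega1} is simply the $\phi_{\mathrm{FM}}$-image of the cotangent sequence on $M$, using that in the smooth case $\Omega^1_{M/B}\simeq \pi_M^*\pi_{M*}\Omega^1_{M/B}$ and applying Proposition~\ref{prop3.1} to the two pulled-back terms. The central step will then be to identify the composite $R\pi_{P*}\circ \phi_{\mathrm{FM}}$ with $s_B^*$ as functors $D^b\mathrm{Coh}(M)\to D^b\mathrm{Coh}(B)$. By base change and the projection formula, one has
\[
R\pi_{P*}\phi_{\mathrm{FM}}(\CK)\simeq R\pi_{M*}\bigl(\CK\otimes^L Rq_{M*}\CL^\vee\bigr)\otimes \omega_B^\vee[n].
\]
In the smooth case $\pi_M\colon M \to B$ is an abelian scheme with dual $P$, so the $M\leftrightarrow P$ mirror of the Arinkin--Fedorov vanishing $Rq_{P*}\CL^\vee\simeq 0_{B*}\omega_B[-n]$ underlying Example~\ref{ex3.3} gives $Rq_{M*}\CL^\vee\simeq s_{B*}\omega_B[-n]$. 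Substituting and applying the projection formula along $s_B$ collapses the expression to $s_B^*\CK$. Applying this identification to \eqref{eq:Omega1}, the pushed-forward triangle on $B$ becomes the $s_B$-pullback of the cotangent sequence, namely $0\to \Omega^1_B \to s_B^*\Omega^1_M\to \pi_{M*}\Omega^1_{M/B}\to 0$. This is canonically split by the differential $ds_B\colon s_B^*\Omega^1_M\to \Omega^1_B$ of the section (since $\pi_M\circ s_B=\mathrm{id}_B$ gives $ds_B\circ s_B^*d\pi_M=\mathrm{id}$), so the connecting map $R\pi_{P*}\epsilon_{\Omega^1_M}$ vanishes.

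For $Rp_*\epsilon_{P_1}=0$, I would use that $p$ is affine, so $Rp_*=p_*$ returns the underlying quasi-coherent sheaf of a $\mathrm{Sym}^\bullet T_B$-module. Under the identification of $\mathrm{gr}(P_1)$ with the Higgs bundle associated with the VHS $\CV_1$, its underlying $\CO_B$-module is the direct sum of Hodge pieces $\pi_{M*}\Omega^1_{M/B}\oplus R^1\pi_{M*}\CO_M$. Pushing \eqref{eq:P1} forward then produces the triangle
\[
R^1\pi_{M*}\CO_M\to \pi_{M*}\Omega^1_{M/B}\oplus R^1\pi_{M*}\CO_M \to \pi_{M*}\Omega^1_{M/B},
\]
which is manifestly split.

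The principal technical input is the Mukai-type vanishing $Rq_{M*}\CL^\vee\simeq s_{B*}\omega_B[-n]$ in the smooth case, which is the $M\leftrightarrow P$ mirror of the formula used in Example~\ref{ex3.3} and is standard for relative abelian schemes; once this is available, both claims reduce to the two visible splittings described above, from the section $s_B$ and from the Hodge decomposition of the associated graded.
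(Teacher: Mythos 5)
Your proposal is correct and takes essentially the same route as the paper: compute $R\pi_{P*}$ of \eqref{eq:Omega1} and $Rp_*$ of \eqref{eq:P1}, identify the former with $Ls_B^*$ applied to the cotangent sequence (the paper cites Propositions~\ref{prop3.1} and \ref{prop3.2} where you instead derive the identity $R\pi_{P*}\circ\phi_{\mathrm{FM}}\simeq Ls_B^*$ explicitly from the Mukai vanishing $Rq_{M*}\CL^\vee\simeq s_{B*}\omega_B[-n]$, valid for the abelian scheme), and observe that the latter is the trivial extension once the Higgs field is forgotten. Your splitting of \eqref{eq:zerosec} via the retraction $ds_B$ coming from $\pi_M\circ s_B=\mathrm{id}_B$ is a minor, slightly more formal variant of the paper's appeal to $\Omega^1_{A,0}\simeq H^0(A,\Omega^1_A)$; both are fine.
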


\begin{proof}
The vanishing of $Rp_*\epsilon_{P_1}$ is immediate: this amounts to forgetting the Higgs field of of the Higgs bundle, so that it becomes the trivial extension of $\pi_{M*}\Omega^1_{M/B}$ by $R^1\pi_{M*}\CO_M$. For~$R\pi_{P*}\epsilon_{\Omega^1_M}$, we consider the short exact sequence
\begin{equation} \label{eq:pipush}
0 \to \Omega^1_B \to \pi_{P*}\phi_{\mathrm{FM}}(\Omega_M^1) \to \pi_{M*}\Omega^1_{M/B} \to 0
\end{equation}
obtained by applying $R\pi_{P*}$ to \eqref{eq:Omega1}. By Propositions \ref{prop3.1} and \ref{prop3.2}, the sequence \eqref{eq:pipush} is isomorphic to
\begin{equation} \label{eq:zerosec}
0 \to \Omega^1_B \to s_B^*\Omega_M^1 \to \pi_{M*}\Omega_{M/B}^1 \to 0
\end{equation}
obtained by applying $Ls_B^*$ to the cotangent sequence \eqref{eq:cotan}. The sequence \eqref{eq:zerosec} splits because for an abelian variety $A$, the cotangent space $\Omega^1_{A, 0}$ is naturally identified with $H^0(A, \Omega^1_A)$.
\end{proof}

In view of \eqref{eq:split1} and \eqref{eq:split2}, the extension class $\epsilon_{\Omega^1_M}$ is uniquely determined by a morphism
\begin{equation} \label{eq:Omega1'}
\pi_{M*}\Omega^1_{M/B} \to N_{B/P} \otimes \Omega^1_B
\end{equation}
and $\epsilon_{P_1}$ by a morphism
\begin{equation} \label{eq:P1'}
\pi_{M*}\Omega^1_{M/B} \to \Omega^1_B \otimes R^1\pi_{M*}\CO_M.
\end{equation}

\begin{prop} \label{prop5.2}
Under the isomorphisms
\[
N_{B/P} \simeq \Omega_B^1, \quad R^1\pi_{M*}\CO_M \simeq \Omega^1_B
\]
induced by the symplectic form $\sigma$ and the $\pi_M$-relatively ample line bundle $\Theta$, the two morphisms~\eqref{eq:Omega1'} and \eqref{eq:P1'} coincide. In particular, Conjecture \ref{main_conj2} holds for $\pi_M: M \to B$ for~$k = 1$.
\end{prop}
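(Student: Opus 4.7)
The plan is to identify both morphisms \eqref{eq:Omega1'} and \eqref{eq:P1'} with the Kodaira--Spencer class of the smooth family $\pi_M : M \to B$, under the identifications induced by $\sigma$ and $\Theta$, and then invoke the Donagi--Markman cubic condition (Lemma \ref{dm}) to check that the symmetry required by these identifications holds.

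For \eqref{eq:P1'}, the Higgs bundle underlying $\mathrm{gr}(P_1)$ is the weight-$1$ Hodge bundle of the smooth family, with Higgs field equal to the $(1,0)$-graded piece of the Gauss--Manin connection. By Griffiths transversality this is the cup-product pairing
\[
\pi_{M*}\Omega^1_{M/B} \otimes T_B \to R^1\pi_{M*}\CO_M, \quad (\omega, v) \mapsto \omega \cup \kappa(v),
\]
where $\kappa : T_B \to R^1\pi_{M*}T_{M/B}$ is the relative Kodaira--Spencer. Since $\pi_M$ is a smooth abelian scheme, $T_{M/B} \simeq \pi_M^*\pi_{M*}T_{M/B}$, and the cup product above is exactly the morphism \eqref{eq:P1'}.

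For \eqref{eq:Omega1'}, I would trace the construction of $\phi_{\mathrm{FM}}$ applied to the cotangent sequence \eqref{eq:cotan}, whose class in $\Ext^1_M(\Omega^1_{M/B}, \pi_M^*\Omega^1_B)$ is also the Kodaira--Spencer. Using base change and the projection formula along the $0$-sections $0_M: M \hookrightarrow M\times_B P$ and $0_B: B \hookrightarrow P$, together with the already-established vanishing of the $\mathrm{Hom}_B(\pi_{M*}\Omega^1_{M/B}, \Omega^1_B[1])$-component, the class of \eqref{eq:Omega1} is captured by the ``normal direction'' of the Atiyah class of the Poincaré bundle $\CL$ along $B \subset P$. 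A direct computation of this normal-bundle component identifies \eqref{eq:Omega1'} with the Kodaira--Spencer contracted against the isomorphism $N_{B/P} \simeq \pi_{M*}\Omega^1_{M/B}$ coming from $\sigma$ and $\Theta$.

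Finally, I would match \eqref{eq:Omega1'} and \eqref{eq:P1'} using the Donagi--Markman cubic condition: under the composite isomorphism $R^1\pi_{M*}\CO_M \simeq (\pi_{M*}\Omega^1_{M/B})^\vee \simeq \Omega_B^1$ from relative Serre duality (involving $\Theta$) combined with $\sigma$, the Kodaira--Spencer becomes a symmetric tensor in $\mathrm{Sym}^2 T_B^\vee \otimes \Omega_B^1$. This symmetry is precisely the compatibility needed for the two identifications --- the one via $N_{B/P}$ and the one via $R^1\pi_{M*}\CO_M$ --- to yield the same morphism, forcing \eqref{eq:Omega1'} $=$ \eqref{eq:P1'}. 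The main obstacle I anticipate is the Atiyah-class bookkeeping on the Fourier--Mukai side: keeping track of Grothendieck--Verdier duality across the kernel and of the canonical splittings \eqref{eq:split1}--\eqref{eq:split2} is delicate, but once the surviving mixed component is isolated, the comparison becomes a formal consequence of Griffiths transversality and the Donagi--Markman symmetry.
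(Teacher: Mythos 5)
Your overall framework matches the paper's: identify (\ref{eq:P1'}) with the associated graded of the Gauss--Manin connection, relate (\ref{eq:Omega1'}) to the same data coming from the cotangent sequence (\ref{eq:cotan}), and invoke the Donagi--Markman cubic (Lemma \ref{dm}) to reconcile the two placements of the $\Omega^1_B$ factor. Your identification of (\ref{eq:P1'}) with the Kodaira--Spencer cup product is essentially what the paper says via Griffiths transversality, so that half is fine.

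The gap is on the Fourier--Mukai side. You propose to extract (\ref{eq:Omega1'}) by ``a direct computation'' of the normal-bundle component of the Atiyah class of $\CL$ along $B \subset P$, and assert without argument that this gives the Kodaira--Spencer contraction. This is precisely where the paper does real work and where your route diverges from theirs: the paper avoids any Atiyah-class computation entirely, instead using the compatibility $L0_B^* \circ \phi_{\mathrm{FM}} \simeq (R\pi_{M*}(-)) \otimes \omega_B^\vee[n]$ (Proposition \ref{prop3.2}) to transfer the question to the base. Crucially, that transfer does \emph{not} land you directly in the weight-one piece $\CV^{1,0} \to \CV^{0,1}\otimes\Omega^1_B$ you want. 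It lands you in the top graded piece (\ref{eq:ko}), which via the Katz--Oda theorem is the Gauss--Manin map $\overline{\nabla}: \CV^{1,n-1} \to \CV^{0,n}\otimes\Omega^1_B$; the paper then uses the abelian-fibration identity $\CV^{1,n-1} \simeq \CV^{0,n-1}\otimes\CV^{1,0}$, linearity in the $\CV^{0,n-1}$-factor, and the fiberwise perfect pairing $R^1\pi_{M*}\CO_M \otimes R^{n-1}\pi_{M*}\CO_M \to R^n\pi_{M*}\CO_M$ to slide down to weight one. Your sketch gives no account of why the Atiyah-class computation would directly produce the weight-one Gauss--Manin rather than this top-weight object, nor how to carry out the reduction. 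Until this ``direct computation'' is actually performed --- including the bookkeeping of canonical quotients that you acknowledge is the obstacle --- the argument is incomplete; the assertion that the normal component of $\mathrm{At}(\CL)$ equals the Kodaira--Spencer contraction is plausible but unproved, and it is exactly the non-formal content of the proposition.

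One smaller point: you invoke symmetry in $\mathrm{Sym}^2 T_B^\vee \otimes \Omega^1_B$, but the Donagi--Markman condition supplies a class in $H^0(B, \mathrm{Sym}^3\Omega^1_B)$. For the side-change actually needed here --- swapping the roles of the $\Omega^1_B$ factor and the image factor under the chosen identifications --- you should be explicit about which transposition you are using, since the identification $R^1\pi_{M*}\CO_M \simeq \Omega^1_B$ already combines $\sigma$ and $\Theta$-duality, and the argument depends on the full cubic symmetry, not just symmetry in the first two slots.
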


\begin{proof}
By the construction of the Spencer/Koszul resolution, the morphism \eqref{eq:P1'} is precisely the associated graded of the Gauss--Manin connection
\begin{equation} \label{eq:gm}
\overline{\nabla}: \CV^{1, 0} \to \Omega^1_B \otimes \CV^{0, 1}.
\end{equation}

To see the relation of \eqref{eq:Omega1'} with the Gauss--Manin connection, we consider the following commutative diagram
\begin{equation*}
\begin{tikzcd}
\mathrm{Hom}_M(\pi_M^*\pi_{M*}\Omega^1_{M/B}, \pi_M^*\Omega_B^1[1]) \arrow{r}{\phi_{\mathrm{FM}}}[swap]{\simeq} \arrow{d}{R\pi_{M*}} & \mathrm{Hom}_P(0_{B*}\pi_{M*}\Omega^1_{M/B}, 0_{B*}\Omega_B^1[1]) \arrow{d}{L0_B^*}\\
\mathrm{Hom}_B(R\pi_{M*}\pi_M^*\pi_{M*}\Omega^1_{M/B}, R\pi_{M*}\pi_M^*\Omega^1_B[1]) \arrow{r}{\otimes \omega_B^\vee[n]}[swap]{\simeq} & \mathrm{Hom}_B(L0_B^*0_{B*}\pi_{M*}\Omega^1_{M/B}, L0_B^*0_{B*}\Omega_B^1[1]).
\end{tikzcd}
\end{equation*}
Here the diagram commutes by Proposition \ref{prop3.2}, and the top row is an isomorphism since $\phi_{\mathrm{FM}}$ is fully-faithful (in fact, an equivalence). Applying the decomposition \eqref{eq:kollar} and the derived self-intersection \eqref{eq:self} to the bottom terms, we find
\begin{multline}\label{eq:dec1}
\mathrm{Hom}_B(R\pi_{M*}\pi_M^*\pi_{M*}\Omega^1_{M/B}, R\pi_{M*}\pi_M^*\Omega^1_B[1]) \\
\simeq{} \mathrm{Hom}_B\left(\bigoplus_{i = 0}^nR^i\pi_{M*}\CO_M \otimes \pi_{M*}\Omega^1_{M/B}[-i], \bigoplus_{i = 0}^nR^i\pi_{M*}\CO_M \otimes \Omega^1_B[-i + 1]\right)
\end{multline}
and respectively
\begin{multline}\label{eq:dec2}
\mathrm{Hom}_B(L0_B^*0_{B*}\pi_{M*}\Omega^1_{M/B}, L0_B^*0_{B*}\Omega_B^1[1]) \\
\simeq{} \mathrm{Hom}_B\left(\bigoplus_{i = 0}^n\wedge^iN_{B/P}^{\vee} \otimes \pi_{M*}\Omega^1_{M/B}[i], \bigoplus_{i = 0}^n\wedge^iN_{B/P}^{\vee} \otimes \Omega^1_B[i + 1]\right).
\end{multline}
As is usual with decomposition-type theorems, the decompositions \eqref{eq:dec1} and \eqref{eq:dec2} themselves are not canonical. Yet one can associate canonical filtrations via the natural truncations $\tau_{\geq j}, \tau_{\leq j}$ of the arguments of $\mathrm{Hom}(-, -)$. In particular, the left-hand side of \eqref{eq:dec1} admits a canonical quotient
\begin{equation} \label{eq:ko}
\mathrm{Hom}_B(R^{n - 1}\pi_{M*}\CO_M \otimes \pi_{M*}\Omega^1_{M/B}[-n + 1], R^n\pi_{M*}\CO_M\otimes \Omega^1_B[-n + 1]).
\end{equation}
Similarly, the left-hand side of \eqref{eq:dec2} admits a canonical quotient
\begin{equation} \label{eq:canquot}
\mathrm{Hom}_B(N_{B/P}^\vee \otimes \pi_{M*}\Omega^1_{M/B}[1], \Omega^1_B[1]).
\end{equation}

By definition, the morphism \eqref{eq:Omega1'} is obtained by applying $L0_B^*$ to the extension class $\epsilon_{\Omega^1_M}$, and then projecting to the quotient space \eqref{eq:canquot}. This in turn corresponds (via $\otimes \omega_B^\vee[n]$) to the morphism obtained by applying~$R\pi_{M*}$ to the extension class of the cotangent sequence~\eqref{eq:cotan}, and then projecting to the quotient space \eqref{eq:ko}. In other words, we have a commutative diagram
\begin{equation} \label{eq:gmseq}
\begin{tikzcd}
\pi_{M*}\Omega^1_{M/B} \arrow{r}{} \arrow{d}{\simeq} & N_{B/P} \otimes \Omega_B^1 \arrow{d}{\simeq} \\
\pi_{M*}\Omega^1_{M/B} \arrow{r}{} \arrow{d}{\simeq} & (R^{n - 1}\pi_{M*}\CO_M)^\vee \otimes R^n\pi_{M*}\CO_M \otimes \Omega^1_B \arrow{d}{\simeq} \\
\pi_{M*}\Omega^1_{M/B} \arrow{r}{} & R^1\pi_{M*}\CO_M \otimes \Omega_B^1.
\end{tikzcd}
\end{equation}
Here the first row is the morphism \eqref{eq:Omega1'}, the second row is obtained from the element in \eqref{eq:ko}, and the third row uses the (fiberwise) perfect pairing
\[
R^1\pi_{M*}\CO_M \otimes R^{n - 1}\pi_{M*}\CO_M \to R^n\pi_{M*}\CO_M
\]
for a smooth abelian fibration $\pi_M: M \to B$.

We set $\CV_k = R^k\pi_{M*}\BQ_M \otimes_\BQ \CO_B$ with graded pieces $\CV^{i, k - i} = \mathrm{gr}^i\CV_k$. The associated graded of the Gauss--Manin connection takes the form
\[
\overline{\nabla}: \CV^{i, k - i} \to \CV^{i - 1, k - i + 1} \otimes \Omega_B^1
\]
and is linear with respect to the $\CV^{0, k - i}$-factor in $\CV^{i, k - i} \simeq \CV^{0, k - i} \otimes \CV^{i, 0}$ by virtue of Griffiths transversality. Finally, by the Katz--Oda description of the Gauss--Manin connection \cite{KO}, the extension class of the cotangent sequence \eqref{eq:cotan} projected to \eqref{eq:ko} is the associated graded
\begin{equation} \label{eq:gm2}
\overline{\nabla}: \CV^{1, n - 1} \to \CV^{0, n} \otimes \Omega_B^1.
\end{equation}
Since \eqref{eq:gm2} is linear with respect to the $\CV^{0, n - 1}$-factor in $\CV^{1, n - 1} \simeq \CV^{0, n - 1} \otimes \CV^{1, 0}$, we find that the bottom row of \eqref{eq:gmseq} is precisely the associated graded
\[
\overline{\nabla}: \CV^{1, 0} \to \CV^{0, 1} \otimes \Omega^1_B.
\]
Note that this time the factor $\Omega^1_B$ is placed on the right as opposed to \eqref{eq:gm}. Also note that we have yet to use the symplectic form $\sigma$ and the $\pi_M$-relatively ample line bundle $\Theta$. To deal with the side-change we evoke the following result of Donagi--Markman~\cite{DM}; see also \cite[Theorem 4.4]{Voisin} and \cite[Lemma 1.2]{SY2}.
\end{proof}

\begin{lem}[Donagi--Markman cubic condition] \label{dm}
Under the isomorphisms
\[
T_B \simeq \pi_{M*}\Omega^1_{M/B}, \quad R^1\pi_{M*}\CO_M \simeq \Omega^1_B
\]
induced by the symplectic form $\sigma$ together with the $\pi_M$-relatively ample line bundle $\Theta$, the associated graded of the Gauss--Manin connection
\[
\overline{\nabla}: \pi_{M*}\Omega^1_{M/B} \to R^1\pi_{M*}\CO_M \otimes  \Omega^1_B
\]
comes from a cubic form in $H^0(B, \mathrm{Sym}^3\Omega^1_B)$.
\end{lem}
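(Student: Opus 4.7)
The plan is to unravel $\overline{\nabla}$ via Kodaira--Spencer and then extract the required total symmetry from two independent inputs: the polarization and the Lagrangian condition $d\sigma = 0$. By the Katz--Oda formula applied to the tangent sequence $0 \to T_{M/B} \to T_M \to \pi_M^*T_B \to 0$, the graded Gauss--Manin connection $\overline{\nabla}$ is the Yoneda contraction with the Kodaira--Spencer class $\kappa \in \mathrm{Hom}(T_B, R^1\pi_{M*}T_{M/B})$. The Lagrangian condition identifies $T_{M/B} \simeq \pi_M^*\Omega^1_B$ via $\sigma$, so that by the projection formula $R^1\pi_{M*}T_{M/B} \simeq R^1\pi_{M*}\CO_M \otimes \Omega^1_B$. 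Combined with $\pi_{M*}\Omega^1_{M/B} \simeq T_B$ (again from $\sigma$) and $R^1\pi_{M*}\CO_M \simeq \Omega^1_B$ (from $\Theta$), this assembles $\overline{\nabla}$ into a tensor
\[
c \in H^0\bigl(B,\, \Omega^1_B \otimes \Omega^1_B \otimes \Omega^1_B\bigr),
\]
and the goal becomes to show that $c$ lies in the totally symmetric summand $H^0(B, \mathrm{Sym}^3 \Omega^1_B)$.

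Next I would verify the two transpositions $(23)$ and $(12)$, which together generate $S_3$. The transposition $(23)$, exchanging the two ``Hodge directions'', is a formal consequence of the Higgs field being self-adjoint for any polarized weight-$1$ VHS: $\langle \theta(v)x, y\rangle = \langle \theta(v)y, x\rangle$ for $x,y \in H^{1,0}$, which follows from Griffiths transversality together with the horizontality of the polarization. For the transposition $(12)$, which swaps the Kodaira--Spencer direction with a Hodge direction, the Lagrangian hypothesis must enter. In local action-angle coordinates $(q_i, p_i)$ with $\sigma = \sum_i dq_i \wedge dp_i$, the basis vector $\partial/\partial q_i \in T_B$ corresponds under the Lagrangian isomorphism to the fiber form $dp_i \in \pi_{M*}\Omega^1_{M/B}$, and the tensor $c$ is computed by the derivatives of the fiber period matrix, $c_{kij} = \partial_{q_k}\tau_{ij}(q)$. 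The $(23)$-symmetry amounts to $\tau_{ij}=\tau_{ji}$ from the Riemann bilinear relations, while $d\sigma = 0$ yields the further symmetry $\partial_{q_k}\tau_{ij} = \partial_{q_i}\tau_{kj}$, which is the $(12)$-symmetry.

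The hard part will be establishing the $(12)$-symmetry. Unlike $(23)$, which is intrinsic to any polarized VHS of weight $1$, this step genuinely uses the Lagrangian structure and the specific definition of the isomorphism $T_B \simeq \pi_{M*}\Omega^1_{M/B}$ induced by $\sigma$. Conceptually it reflects that the base of a smooth Lagrangian fibration carries a special K\"ahler structure whose defining prepotential has $c$ as its third derivative. Rather than carry out the local action-angle computation in full detail, I would rely on the original Donagi--Markman argument \cite{DM} or Voisin's reformulation \cite[Theorem 4.4]{Voisin} to implement this final step.
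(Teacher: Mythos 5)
The paper does not prove this lemma; it simply states it and refers the reader to Donagi--Markman \cite{DM}, Voisin \cite[Theorem 4.4]{Voisin}, and \cite[Lemma 1.2]{SY2}. Your sketch is a correct outline of what those references do --- identifying $\overline{\nabla}$ with the Kodaira--Spencer tensor via Katz--Oda, and splitting the required $S_3$-invariance into the $(23)$-transposition (flatness of the polarization for a weight-one VHS) and the $(12)$-transposition ($d\sigma = 0$ expressed in action--angle coordinates, equivalently the existence of a local prepotential) --- and you appropriately defer the one genuinely nontrivial step, the $(12)$-symmetry, to the very same sources the paper cites, so this matches the paper's treatment.
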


\subsection{General case}
We proceed to higher $k$. We show by exploiting functorial properties of smooth abelian fibrations that the higher $k$ cases follow from $k = 1$.

We first extend the Pontryagin product in Section \ref{Sec3.2} to more general (formal) group spaces $\pi_G : G \to B$. Later we will mainly consider $G = P, T^*B, \hat{B}$. Let
\[
m_G: G \times_B G \to G
\]
be the (formal) addition map. We define
\[
\star^R : D\mathrm{QCoh}(G) \times D\mathrm{QCoh}(G) \to D\mathrm{QCoh}(G), \quad (\CE_1, \CE_2) \mapsto Rm_{G*}(q_1^*\CE_1 \otimes^L q_2^*\CE_2)
\]
where $q_1$ and $q_2$ are the two projections from $G \times_B G$. The underived version $\star$ is defined accordingly.

\begin{lem} \label{lem5.4}
Let $\CE = \phi_{\mathrm{FM}}(\Omega^1_M) \in \mathrm{Coh}(P)$ (resp.~$\CE = \mathrm{gr}(P_1) \in \mathrm{Coh}(T^*B)$). For $k \geq 1$, we~have
\[
\CE^{\star^R k} \simeq \CE^{\star k} \in \mathrm{Coh}(P) \quad \textrm{(resp.~$\in \mathrm{Coh}(T^*B)$)}
\]
and
\[
(\CE^{\star k})|_{\hat{B}} \simeq (\CE|_{\hat{B}})^{\star k} \in \mathrm{Coh}(\hat{B}).
\]
\end{lem}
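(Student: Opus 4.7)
The plan is to prove both assertions by induction on $k$, using as input the fact that~$\CE$ is in either case a Cohen--Macaulay sheaf of dimension $n$ on a smooth $2n$-dimensional ambient $G$ whose \emph{reduced} support is the zero section $0_B(B) \simeq B$. For $\CE = \phi_{\mathrm{FM}}(\Omega_M^1)$, the CM property is Theorem~\ref{thm0.3}, and the short exact sequence \eqref{eq:Omega1} exhibits $\CE$ as an extension of two sheaves scheme-theoretically supported on $0_B(B)$, forcing the reduced support of $\CE$ to coincide with the zero section. For $\CE = \mathrm{gr}(P_1)$, the CM property is Theorem~\ref{gr(P)}, and \eqref{eq:P1} together with the nilpotency of the associated graded Gauss--Manin connection forces the spectral support of $\mathrm{gr}(P_1)$ to lie in the zero section of $T^*B$.

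For the first assertion I would argue inductively. Suppose $\CE^{\star^R (k-1)} \simeq \CE^{\star(k-1)}$ is CM of dimension $n$ with reduced support on $B$, and form $q_1^*\CE^{\star(k-1)} \otimes^L q_2^*\CE$ on the smooth $3n$-fold $G \times_B G$. Smooth pullback preserves the CM property, so both factors are CM of codimension $n$, and the intersection of their supports lies in $0_B(B) \times_B 0_B(B) \simeq B$, of codimension exactly $2n = n + n$. This is the proper intersection regime for CM sheaves on a smooth variety, so by the standard Tor-vanishing criterion (Koszul/Auslander--Buchsbaum, verified locally) the derived tensor product is concentrated in degree $0$ and is CM of dimension $n$, still with reduced support in $B$. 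The addition map $m_G$ restricted to $B \times_B B = B$ coincides with the zero section inclusion $0_B: B \hookrightarrow G$, a closed immersion; hence $m_G$ is finite on the support of the tensor product, so $Rm_{G*}$ degenerates to an underived pushforward and preserves CM, yielding $\CE^{\star^R k} \simeq \CE^{\star k}$ and closing the induction.

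For the second assertion I would exploit the flatness of $\hat{B} \hookrightarrow G$: restriction to $\hat{B}$ commutes with derived tensor products. Since $q_1^*\CE \otimes^L \cdots \otimes^L q_k^*\CE$ is set-theoretically supported on $B \subset G^{\times_B k}$ and $m_G$ sends this support to $B \subset \hat{B}$, a flat base change along $\hat{B} \to G$ identifies $(Rm_{G*}(-))|_{\hat{B}}$ with $R\tilde{m}_{G*}((-)|_{\hat{B}^{\times_B k}})$, where $\tilde{m}_G: \hat{B}^{\times_B k} \to \hat{B}$ is the formal group law inherited from $G$. This gives the required $(\CE^{\star k})|_{\hat{B}} \simeq (\CE|_{\hat{B}})^{\star k}$.

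The main technical obstacle I expect is the Tor-vanishing step: while the codimension count makes it morally clear, rigorously one must invoke the Cohen--Macaulay intersection principle for two CM modules meeting in the expected codimension on a regular local ring. Once this is in place, the rest---finite-on-support pushforward preserves CM, and flat base change to $\hat{B}$---is largely formal.
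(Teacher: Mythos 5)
Your proof takes the same route as the paper: reduce to two factors, use Cohen--Macaulayness and zero-section support to collapse the derived tensor, push forward along the finite restriction of the addition map to the support, and conclude the second part by flat base change after restricting to the support. Where the paper is terse you are appropriately explicit: the paper merely asserts $q_1^*\CE_1 \otimes^L q_2^*\CE_2 \simeq q_1^*\CE_1 \otimes q_2^*\CE_2$, while you supply the Tor-vanishing justification via the proper-intersection criterion for Cohen--Macaulay sheaves on the smooth ambient $G \times_B G$, and you correctly track the ``Cohen--Macaulay of pure dimension $n$ with reduced support equal to the zero section'' invariant through the induction --- the paper's stated inductive hypothesis $\mathrm{supp}^{\mathrm{red}}(\CE_i) \subset B$ alone would not force the Tor-vanishing (two skyscrapers sitting on the zero section already meet improperly), so the extra Cohen--Macaulay input from Theorems~\ref{thm0.3} and~\ref{gr(P)} is genuinely being used and you are right to carry it along.
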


\begin{proof}
Take $\CE_1, \CE_2 \in \mathrm{Coh}(P)$ such that $\mathrm{supp}^{\mathrm{red}}(\CE_i) \subset B$, $i = 1, 2$. Since the reduced support~of
\[
q_1^*\CE_1 \otimes^L q_2^*\CE_2 \simeq q_1^*\CE_1 \otimes q_2^*\CE_2 \in \mathrm{Coh}(P \times_B P)
\]
is contained in the $0$-section $B \subset P \times_B P$, we have
\[
\CE_1 \star^R \CE_2 \simeq Rm_{P*}(q_1^*\CE_1 \otimes q_2^*\CE_2) \simeq m_{P*}(q_1^*\CE_1 \otimes q_2^*\CE_2) \simeq \CE_1 \star \CE_2 \in \mathrm{Coh}(P)
\]
with $\mathrm{supp}^{\mathrm{red}}(\CE_1 \star \CE_2 ) \subset B$. We also have
\begin{align*}
(\CE_1 \star \CE_2)|_{\hat{B}} & \simeq (m_{P*}(q_1^*\CE_1 \otimes q_2^*\CE_2))|_{\hat{P}} \\
& \simeq m_{\hat{B}*}((q_1^*\CE_1 \otimes q_2^*\CE_2)|_{\hat{B}}) \\
& \simeq m_{\hat{B}*}(q_1^*(\CE_1|_{\hat{B}}) \otimes q_2^*(\CE_2|_{\hat{B}})) \\
& \simeq \CE_1|_{\hat{B}} \star \CE_2|_{\hat{B}} \in \mathrm{Coh}(\hat{B}).
\end{align*}
Here the second isomorphism uses the base change with respect to $m_P: \mathrm{supp}(q_1^*\CE_1 \otimes q_2^*\CE_2) \to P$, and the third isomorphism uses the flatness of $|_{\hat{B}}$. The statement for $\CE = \phi_{\mathrm{FM}}(\Omega^1_M)$ follows by induction on $k$ and the proof for $\CE = \mathrm{gr}(P_1)$ is identical.
\end{proof}

Lemma \ref{lem5.4} together with Propositions \ref{prop3.3} and \ref{prop5.2} implies that
\begin{equation} \label{eq:five}
\phi_{\mathrm{FM}}((\Omega_M^1)^{\otimes k})|_{\hat{B}} \simeq (\phi_{\mathrm{FM}}(\Omega_M^1)^{\star k})|_{\hat{B}} \simeq (\phi_{\mathrm{FM}}(\Omega_M^1)|_{\hat{B}})^{\star k} \simeq (\mathrm{gr}(P_1)|_{\hat{B}})^{\star k} \simeq (\mathrm{gr}(P_1)^{\star k})|_{\hat{B}}.
\end{equation}
To understand $\mathrm{gr}(P_1)^{\star k}$, we recall the Higgs interpretation of $\mathrm{gr}(P_1)$. Let $p: T^*B \to B$ be the natural projection. The pushforward $p_*\mathrm{gr}(P_1)$ is a $p_*\CO_{T^*B} \simeq \mathrm{Sym}(T_B)$-module, which is precisely given by the vector bundle
\[
\mathrm{gr}(\CV_1) := \CV^{0, 1} \oplus \CV^{1, 0}
\]
together with the Higgs field
\[
\overline{\nabla}|_{\CV^{0, 1}} = 0, \quad \overline{\nabla}|_{\CV^{1, 0}}: \CV^{1,0} \to \Omega^1_B \otimes \CV^{0, 1}.
\]
Conversely, from the Higgs bundle $(\mathrm{gr}(\CV_1), \overline{\nabla})$ one also recovers the $\CO_{T^*B}$-module
\[
\mathrm{gr}(P_1) \simeq p^{-1} \mathrm{gr}(\CV_1) \otimes_{p^{-1}p_*\CO_{T^*B}} \CO_{T^*B}.
\]

\begin{lem} \label{lem5.5}
For $k \geq 1$, we have
\[
p_*(\mathrm{gr}(P_1)^{\star k}) \simeq (\mathrm{gr}(\CV_1)^{\otimes k}, \overline{\nabla}^{\otimes k}) \in \mathrm{Coh}(p_*\CO_{T^*B}).
\]
Here the Higgs field $\overline{\nabla}^{\otimes k}$ on $\mathrm{gr}(\CV_1)^{\otimes k}$ is defined by the Leibniz rule.
\end{lem}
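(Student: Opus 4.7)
The plan is to translate the Pontryagin product on $T^*B$ into an operation on Higgs bundles using the identification $T^*B = \Spec_B \mathrm{Sym}(T_B)$, under which $\CO_{T^*B}$-modules correspond (after $p_*$) to $\mathrm{Sym}(T_B)$-modules, i.e., Higgs bundles on $B$. Under this dictionary, the claim amounts to saying that $\star$ corresponds to the Higgs tensor product with Leibniz rule, after which the lemma follows by induction on $k$ applied to the case $(\mathrm{gr}(\CV_1), \overline{\nabla})$.

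First I would make the base-change computation. Let $\tilde p : T^*B \times_B T^*B \to B$ and let $q_1, q_2$ be the projections. Since $p \circ m_{T^*B} = p \circ q_1 = p \circ q_2 = \tilde p$, for any $\CE_1, \CE_2 \in \mathrm{Coh}(T^*B)$ set-theoretically supported on the zero section (for instance $\CE_i = \mathrm{gr}(P_1)$, where this holds by Theorem \ref{gr(P)}), Lemma \ref{lem5.4} gives
\[
p_*(\CE_1 \star \CE_2) \;\simeq\; \tilde p_*\bigl(q_1^*\CE_1 \otimes q_2^*\CE_2\bigr).
\]
Since $T^*B$ is a vector bundle over $B$ and $q_1, q_2$ are flat, the right-hand side is naturally identified with $p_*\CE_1 \otimes_{\CO_B} p_*\CE_2$ as an $\CO_B$-module, and as a module over $\tilde p_* \CO_{T^*B \times_B T^*B} = \mathrm{Sym}(T_B) \otimes_{\CO_B} \mathrm{Sym}(T_B)$.

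The next step is to identify the $\mathrm{Sym}(T_B)$-module structure coming from the map $p \circ m_{T^*B}$. The algebra homomorphism
\[
\mathrm{Sym}(T_B) \;=\; p_*\CO_{T^*B} \;\longrightarrow\; p_* m_{T^*B *}\CO_{T^*B\times_B T^*B} \;=\; \mathrm{Sym}(T_B) \otimes_{\CO_B} \mathrm{Sym}(T_B)
\]
is induced by $m_{T^*B}$ and is thus the comultiplication $v \mapsto v \otimes 1 + 1 \otimes v$ for $v \in T_B$. Under the equivalence between $\mathrm{Sym}(T_B)$-modules (set-theoretically supported on $B$) and Higgs bundles on $B$, this is precisely the Leibniz rule: if $(M_i, \theta_i)$ are the Higgs bundles associated with $p_*\CE_i$, then $p_*(\CE_1 \star \CE_2)$ corresponds to $(M_1 \otimes M_2, \theta_1 \otimes 1 + 1 \otimes \theta_2)$. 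In particular,
\[
p_*\bigl(\mathrm{gr}(P_1) \star \mathrm{gr}(P_1)\bigr) \;\simeq\; \bigl(\mathrm{gr}(\CV_1)^{\otimes 2},\, \overline{\nabla}\otimes 1 + 1 \otimes \overline{\nabla}\bigr) \;=\; \bigl(\mathrm{gr}(\CV_1)^{\otimes 2},\, \overline{\nabla}^{\otimes 2}\bigr).
\]

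Finally, induction on $k$ finishes the argument: both $\star$ and the Higgs tensor product are associative, and the induction step is an identical application of the base-change and comultiplication analysis above, now with $\CE_1 = \mathrm{gr}(P_1)^{\star(k-1)}$ and $\CE_2 = \mathrm{gr}(P_1)$. The main subtlety to handle carefully is tracking how the $\mathrm{Sym}(T_B)$-module structure transports through the pushforward along $m_{T^*B}$ (the Leibniz-rule identification); everything else is formal given Lemma \ref{lem5.4}, which already ensures that all derived Pontryagin products collapse to underived ones and that support remains in the zero section throughout the induction.
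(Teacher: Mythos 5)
Your proof is correct and follows essentially the same approach as the paper: both identify the Pontryagin product on sheaves supported on the zero section with the tensor product of Higgs bundles via the comultiplication map $p_*\CO_{T^*B} \to p_*\CO_{T^*B} \otimes_{\CO_B} p_*\CO_{T^*B}$ induced by $m_{T^*B}$ (which in coordinates is $\xi_j \mapsto \xi_j \otimes 1 + 1 \otimes \xi_j$, hence the Leibniz rule), and then conclude by induction on $k$. Your write-up just spells out the base-change step and the appeal to Lemma~\ref{lem5.4} a bit more explicitly than the paper does.
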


\begin{proof}
The addition map $m_{T^*B}: T^*B \times_B T^*B \to T^*B$ corresponds to the morphism of sheaves
\begin{equation} \label{eq:hopf}
p_*\CO_{T^*B} \to p_*\CO_{T^*B} \otimes_{\CO_B} p_*\CO_{T^*B}.
\end{equation}
In local coordinates $\CO_U[\xi_1, \ldots, \xi_n]$ over $U \subset B$, the map \eqref{eq:hopf} sends $\xi_j$ to $\xi_j \otimes 1 + 1 \otimes \xi_j$. 

Take $\CF_1, \CF_2 \in \mathrm{Coh}(T^*B)$ such that $p_*\CF_i \simeq (\CE_i, \theta_i) \in \mathrm{Coh}(p_*\CO_{T^*B})$, $i = 1, 2$. Then the $p_*\CO_{T^*B}$-module $p_*(\CF_1 \star \CF_2)$ is given by the tensor product $\CE_1 \otimes \CE_2$ whose $p_*\CO_{T^*B}$-module structure is obtained by composing with \eqref{eq:hopf}, hence the Leibniz rule for the Higgs fields $\theta_1$ and $\theta_2$. The statement of the lemma follows by induction on $k$.
\end{proof}

We are ready to prove Conjecture \ref{main_conj2} for the smooth Lagrangian fibration $\pi_M : M \to B$.

\begin{proof}[Proof of Theorem \ref{thm0.4}]
As in the proof of Proposition \ref{prop5.2} we set $\CV_k = R^k\pi_{M*}\BQ_M \otimes_\BQ \CO_B$ with graded pieces $\CV^{i, k - i} = \mathrm{gr}^i\CV_k$. The associated graded $\mathrm{gr}(P_k) \in \mathrm{Coh}(T^*B)$ corresponds via $p_*$ to the Higgs bundle
\[
\left(\mathrm{gr}(\CV_k) := \bigoplus_{i = 0}^k\CV^{i, k - i}, \overline{\nabla}\right) \in \mathrm{Coh}(p_*\CO_{T^*B})
\]
where
\[
\overline{\nabla}|_{\CV^{i, k - i}}: \CV^{i, k - i} \to \Omega_B^1 \otimes \CV^{i - 1, k - i + 1}
\]
is the associated graded of the Gauss--Manin connection. Now since $\pi_M: M \to B$ is a smooth abelian fibration, we have canonical isomorphisms
\[
\CV_k \simeq \wedge^k\CV_1, \quad \CV^{i, k - i} \simeq \CV^{0, k - i} \otimes \CV^{i, 0} \simeq \wedge^{k - i}\CV^{0, 1} \otimes \wedge^i\CV^{1, 0},
\]
and
\begin{equation} \label{eq:wedge}
(\mathrm{gr}(\CV_k), \overline{\nabla}) \simeq (\wedge^k\mathrm{gr}(\CV_1), \wedge^k\overline{\nabla})
\end{equation}
for all $k \geq 1$. Here the Higgs field $\wedge^k\overline{\nabla}$ on $\wedge^k\mathrm{gr}(\CV_1)$ is defined by the Leibniz rule.

By \eqref{eq:five} the restrictions of $\phi_{\mathrm{FM}}((\Omega_M^1)^{\otimes k})$ and $\mathrm{gr}(P_1)^{\star k}$ to the formal neighborhood(s) $\hat{B}$ are isomorphic. Moreover $\mathrm{gr}(P_1)^{\star k}$ corresponds via $p_*$ to the Higgs bundle $(\mathrm{gr}(\CV_1)^{\otimes k}, \overline{\nabla}^{\otimes k})$ by Lemma~\ref{lem5.5}. It is also clear that both Lemmas \ref{lem5.4} and~\ref{lem5.5} are $\mathfrak{S}_n$-equivariant. Taking antisymmetric tensors and in view of \eqref{eq:wedge}, we conclude that
\[
\phi_{\mathrm{FM}}(\Omega_M^k)|_{\hat{B}} \simeq \mathrm{gr}(P_k)|_{\hat{B}}. \qedhere
\]
\end{proof}

\section{Two-dimensional cases}

\subsection{Overview} \label{sec:6.1}
In this section, we prove Theorem \ref{thm0.5}. By our assumption, the morphism
\[
\pi_M: M \to B
\]
is an elliptic fibration over a non-proper curve $B$ with integral fibers and a section $s_B: B \to M$. Moreover, the closed fibers of $\pi_M$ are either a nonsingular elliptic curve, or a nodal rational curve. Assume that $p_1, \cdots, p_m$ are the closed points on $B$ such that the fiber $F_i = \pi_M^{-1}(p_i)$ is nodal, and the restriction
\[
\pi^\circ:= \pi_{M^\circ}: M^\circ  \to B^\circ:= B \setminus \{p_1,\dots,p_m\}, \quad M^\circ:= \pi^{-1}(B^\circ)
\]
is a smooth elliptic fibration. Denote by $j: B^\circ \hookrightarrow B$ the natural open embedding, and denote by $\CV$ the variation of Hodge structures given by $R^1\pi^\circ_* \BQ_M$ over $B^\circ$ as in Section \ref{smooth}. Then a direct calculation (\emph{e.g.}~Proof of \cite[Proposition 4.17]{Z}) yields
\[
P_1 = {j_{!*}}\CV,
\]
where we use the same notation $\CV$ to denote the (pure) Hodge module given by the flat bundle $(\CV, \nabla)$.

In view of Corollary \ref{cor3.2}, Theorem \ref{thm0.5} is reduced to the following theorem.

\begin{thm}\label{thm6.1}
Under the isomorphism $\hat{\kappa}_\Theta$ of Proposition \ref{prop1.4} identifying the formal neighborhoods of $B$ in $P$ and $T^*B$, we have 
\[
\phi_{\mathrm{FM}}(\Omega_M^1)|_{\hat{B}} \simeq \mathrm{gr}({j_{!*}}\CV)|_{\hat{B}} \in \mathrm{Coh}(\hat{B}).
\]
\end{thm}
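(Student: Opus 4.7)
By Theorem \ref{thm0.4} applied to $\pi^\circ : M^\circ \to B^\circ$, the two sheaves in the theorem agree after restriction to the formal neighborhood of $B^\circ$. Since agreement as coherent sheaves on $\hat{B}$ can be checked locally on $B$, it suffices to produce an isomorphism in a formal neighborhood of each nodal point $p_i$. The plan is to present each side as a short exact sequence of two zero-section-supported sheaves on $\hat{B}$, identify the terms pairwise using Deligne's canonical extension, and then match the extension classes in $\mathrm{Ext}^1_{\hat{B}}$.

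\textbf{Parallel presentations.} On the Fourier--Mukai side, apply $\phi_{\mathrm{FM}}$ to the relative cotangent sequence on $M$, exploiting the fact that $\omega_{M/B} \simeq \pi_M^*(\pi_{M*}\omega_{M/B})$ for an elliptic fibration with section, combined with Proposition \ref{prop3.1}. After handling the torsion discrepancy between $\Omega^1_{M/B}$ and $\omega_{M/B}$ at nodes (either by working with $\omega_{M/B}$ directly or by verifying that the Fourier--Mukai transform of the torsion part vanishes on $\hat{B}$), this produces
\[
0 \to 0_{B*}\Omega^1_B \to \phi_{\mathrm{FM}}(\Omega^1_M)|_{\hat{B}} \to 0_{B*}\pi_{M*}\omega_{M/B} \to 0.
\]
On the Hodge-module side, the Hodge filtration on the minimal extension yields the parallel sequence
\[
0 \to 0_{B*}(j_{!*}\CV/F^1 j_{!*}\CV) \to \mathrm{gr}(j_{!*}\CV)|_{\hat{B}} \to 0_{B*} F^1 j_{!*}\CV \to 0,
\]
with $F^1 j_{!*}\CV \simeq \pi_{M*}\omega_{M/B}$ and $j_{!*}\CV/F^1 j_{!*}\CV \simeq R^1\pi_{M*}\CO_M$ via standard comparison with Deligne's canonical extension $\bar{\CV}$ of $\CV$ across $\{p_1,\dots,p_m\}$. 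Serre duality together with the symplectic form $\sigma$ and the polarization $\Theta$ identify $R^1\pi_{M*}\CO_M$ with $\Omega^1_B$, matching the terms on both sides.

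\textbf{Matching extension classes.} By Proposition \ref{prop5.2}, both extension classes in $\mathrm{Ext}^1_{\hat{B}}(0_{B*}\pi_{M*}\omega_{M/B}, 0_{B*}\Omega^1_B)$ restrict over the formal neighborhood of $B^\circ$ to the Gauss--Manin connection on $\CV$. The key claim is that they coincide globally on $\hat{B}$: both are given by the \emph{logarithmic} Gauss--Manin connection on $\bar{\CV}$, whose residue at each $p_i$ is the nilpotent monodromy logarithm $N$. On the Hodge side, this follows from Saito's description of the filtered $\CD$-module structure on $j_{!*}\CV$ combined with Schmid's nilpotent orbit theorem. On the Fourier--Mukai side, the same characterization requires an explicit local computation in the nodal model $\{xy = t\}$ to extract the log pole contribution at $p_i$.

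\textbf{Main obstacle.} The delicate step is this local Fourier--Mukai computation at each node. One must carefully account for the torsion of $\Omega^1_{M/B}$ at the node, show that the extension class acquires the correct logarithmic pole encoded by $N$, and verify that the resulting class matches the one coming from the \emph{minimal extension} $j_{!*}\CV$ (rather than from $\bar{\CV}$ itself, which in general differs as a filtered $\CD$-module in the unipotent case). Correctly tracking this filtered-$\CD$-module data through Deligne's canonical extension is the ``delicate argument'' promised in the introduction, and constitutes the main technical content.
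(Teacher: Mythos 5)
Your proposal contains a fundamental structural error in how both sheaves are presented. You try to exhibit each side as a two-term extension
\[
0 \to 0_{B*}\Omega^1_B \to \phi_{\mathrm{FM}}(\Omega^1_M)|_{\hat{B}} \to 0_{B*}\pi_{M*}\omega_{M/B} \to 0,
\]
with both end terms scheme-theoretically supported on the zero section $B$. This is impossible, because neither $\phi_{\mathrm{FM}}(\Omega^1_M)|_{\hat{B}}$ nor $\mathrm{gr}(j_{!*}\CV)|_{\hat{B}}$ is supported on the zero section alone. On the Hodge module side, the Hodge filtration $F_\bullet(j_{!*}\CV)$ of the minimal extension is \emph{unbounded above}: by Saito's formula \eqref{eq:ic}, one has $\mathrm{gr}_k(j_{!*}\CV) \simeq \bigoplus_i \BC_{p_i}$ for all $k>0$, and the Higgs field acts by the identity in those degrees, so the $\CO_{T^*B}$-module $\mathrm{gr}(j_{!*}\CV)$ contains the structure sheaves $\CO_{F_i}$ of the full cotangent fibers $T^*_{p_i}B$ as subobjects. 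In particular, your middle line $j_{!*}\CV/F^1j_{!*}\CV \simeq R^1\pi_{M*}\CO_M$ is not coherent because that quotient is infinite rank over $\CO_B$. The same phenomenon occurs on the Fourier--Mukai side (consistently with Theorem \ref{thm0.3}, since the singular support $\Lambda$ contains the conormals to the $p_i$). The paper's solution is precisely to replace your two-term sequence by a three-step filtration — the ``good admissible sheaf'' structure $\CW_{-1}\subset\CW_0\subset\CA$ with $\CW_{-1}\simeq\bigoplus_i\CO_{F_i}$ (a fiber-direction piece you have omitted entirely), $\CW_0/\CW_{-1}\simeq 0_{B*}\Omega^1_B(D)$ (note the twist by $D$, which you also drop), and $\CA/\CW_0\simeq 0_{B*}\Omega^{1\vee}_B$ — and to reduce the matching to a single extension class in $\mathrm{Ext}^1_{\hat{B}}(0_{B*}\Omega_B^{1\vee},0_{B*}\Omega^1_B(D))$ identified with the meromorphic Donagi--Markman cubic in $H^0(B,(\Omega^1_B)^{\otimes 3}(D))$.

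There is a second gap: you never use the hypothesis that $B$ is non-proper, but the paper needs it essentially. Proposition \ref{prop6.2} shows that a good admissible sheaf is determined by the image $\rho_\CA([\CA])$, and the proof hinges on the vanishing $\mathrm{Ext}^1_B(\Omega^{1\vee}_B,\Omega^1_B)=0$, which forces the residue map to be surjective. This vanishing fails on a proper curve, and without it the extension class alone does not determine the isomorphism type of the sheaf. Any correct argument must either invoke this affine-curve vanishing or replace it with an alternative mechanism — your proposal does neither. Your remarks about the local nodal model, the monodromy logarithm $N$, and distinguishing $j_{!*}\CV$ from $\bar{\CV}$ as filtered $\CD$-modules are reasonable observations, but they are not the operative difficulties here: the real content is building the correct fiber-supported filtration on the Fourier--Mukai side (the paper does this via the octahedral axiom applied to the logarithmic cotangent triangle \eqref{triangle1} and Lemma \ref{lem6.3}) and proving the good-admissible rigidity statement (Proposition \ref{prop6.2}).
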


\subsection{Gauss--Manin with poles}


We choose as in Section \ref{Sec5.2} a symplectic form $\sigma$ on $M$ as well as a $\pi_M$-relative ample bundle $\Theta$. They induce isomorphisms 
\begin{equation}\label{isom1}
R^1\pi_{M*} \CO_M \simeq \Omega^1_B, \quad N_{B/P} \simeq \Omega^1_B
\end{equation}
where the first isomorphism is given by \cite{Ma3}. We denote by $D$ the effective divisor $\sum_{i=1}^m p_i \subset B$. The divisor
\[
{F} : = \sum_{i=1}^m F_i \subset M
\]
given by the pullback of $D$ is normal crossing on $M$. The symplectic form $\sigma$ also induces an isomorphism
\begin{equation}\label{6.2_1}
\Omega^1_{M/B}(\log {F})  \simeq \Omega_B^{1\vee}.
\end{equation}
By the discussion of Section \ref{Sec5.3}, if we restrict over $B^\circ$, the fixed isomorphisms (\ref{isom1}) and (\ref{6.2_1}) and the Donagi--Markman cubic form induce an isomorphism
\[
 \phi_{\mathrm{FM}}(\Omega_M^1) |_{\hat{B}^\circ} \xrightarrow{\simeq} \mathrm{gr}(\CV)|_{\hat{B}^\circ}.
\]
Note that the symmetry of the Donagi--Markman cubic form is automatic in this case since the base $B$ is 1-dimensional.

Now in order to extend the isomorphism above over $D$, we first need to extend the associated graded of the Gauss--Manin connection, as well as the Donagi--Markman cubic form. We consider the logarithmic cotangent sequence on $M$:
\begin{equation}\label{6.2_0}
0 \to \pi_M^*\Omega_B^1(D) \to \Omega^1_M(\log {F}) \to \Omega^1_{M/B}(\log {F}) \to 0.
\end{equation}
After pushing it forward to $B$, we have the connecting map 
\[
\pi_{M*} \Omega^1_{M/B}(\log {F}) \to   R^1\pi_{M*} \CO_M \otimes \Omega^1_B(D).
\]
We review briefly the Hodge theoretic interpretation of this map via Deligne's canonical extension of $\CV$; see \cite[Section 2]{K}. Recall that the canonical extension depends on a real interval $[a, a+1)$ or $(a, a + 1]$ where the eigenvalues of the residue endomorphism should lie. In our situation, the monodromy around each point of $D$ is unipotent (given by the matrix $(\begin{smallmatrix} 1&1\\0&1 \end{smallmatrix})$ in local coordinates), so the eigenvalues are necessarily integers. Let $\overline{\CV}$ be the canonical extension of $\CV$ with respect to either $[0, 1)$ or~$(-1, 0]$; it is locally free of rank $2$ on $B$. Schmid's theorem says that~$F_\bullet \overline{\CV} := j_*F_\bullet \CV \cap \overline{\CV}$ is a filtration by locally free subsheaves; here we write the Hodge filtration as an increasing filtration $F_i\CV: = F^i\CV$ to be compatible with the convention of Hodge modules,
\[
(\overline{\CV}, F^\bullet, \nabla), \quad \nabla: \overline{\CV} \to   \overline{\CV}\otimes \Omega^1_B(D),\quad  F_{-1}\overline{\CV}  \subset F_{0}\overline{\CV} = \overline{\CV}.
\]
Then by the logarithmic version of the Katz--Oda theorem \cite{Katz}, the connecting map above associated with (\ref{6.2_0}) recovers the associated graded of the meromorphic Gauss--Manin connection $\overline{\nabla}: \mathrm{gr}_{-1}\overline{\CV} \to \mathrm{gr}_{0}\overline{\CV} \otimes \Omega^1_B(D)$:
\begin{equation}\label{6.2_2}
\overline{\nabla}: \pi_{M*} \Omega^1_{M/B}(\log {F}) \to   R^1\pi_{M*} \CO_M \otimes \Omega^1_B(D).
\end{equation}
Further using the isomorphisms (\ref{isom1}) and (\ref{6.2_1}), we obtain that (\ref{6.2_2}) comes from a section
\begin{equation}\label{cubic2}
[\overline{\nabla}] \in H^0(B, (\Omega^1_B)^{\otimes 3}(D)).
\end{equation}
This is indeed the meromorphic extension of the Donagi--Markman cubic form in Lemma \ref{dm}.

\subsection{Admissible sheaves}

Our strategy is similar to the proof of Theorem \ref{thm0.4}. We express both sheaves 
\[
\phi_{\mathrm{FM}}(\Omega^1_M)|_{\hat{B}} \quad \textup{and} \quad \mathrm{gr}(j_{!*}\CV)|_{\hat{B}}
\]
in terms certain ``building blocks'' supported scheme-theoretically on either the 0-section \mbox{$B \subset \hat{B}$} or a closed fiber $\hat{F}_i : = F_i|_{\hat{B}}$, and then we match their extension classes. Since the existence of the singular fibers $F_i$ further complicates the extensions, we introduce the notion of \emph{admissible sheaves} to treat such complexity.

For notational convenience, we will uniformly use $F_i$ to denote the fiber over $p_i$ for either $\pi_M: M \to B$ or the projection $T^*B \to B$. Therefore $F_i$ is either a nodal rational curve or the affine line $\BC$.

We say that an object 
\[
\CA \in \mathrm{Coh}(T^*B) \quad (\textrm{resp.~}\CA \in \mathrm{Coh}(\hat{B}))
\]
is an \emph{admissible sheaf}, if $\CA$ admits an increasing filtration of coherent subsheaves
\[
\CW^\CA_{-1} \subset \CW^{\CA}_0 \subset \CA 
\]
satisfying that 
\begin{enumerate}
    \item[(a)] $\CW^\CA_{-1} \simeq \oplus_{i=1}^m \CO_{F_i}$ (resp.~$\CW^\CA_{-1} \simeq \oplus_{i=1}^m \CO_{\hat{F}_i}$),
    \item[(b)] $\CW^\CA_0/ \CW^\CA_{-1} \simeq 0_{B*}\Omega^1_B(D)$, and
    \item[(c)] $\CA/\CW^\CA_0 \simeq 0_{B*}\Omega_B^{1\vee}$.
\end{enumerate}
By (a, b), the subsheaf $\CW_0^\CA$ fits into a short exact sequence\footnote{For notational convenience, we only describe here the case where $\CA \in \mathrm{Coh}(T^*B)$; the case for $\hat{B}$ is completely parallel.}
\begin{equation}\label{eqn56}
0 \to \bigoplus_{i=1}^m \CO_{F_i} \to \CW_0^\CA \to 0_{B*} \Omega^1_B(D) \to 0,
\end{equation}
whose extension class induces
\begin{equation}\label{eqn57}
[\CW_0^{\CA}] \in \mathrm{Ext}_{T^*B}^1\left(0_{B*}\Omega^1_B(D), \bigoplus_{i=1}^m \CO_{F_i}\right) = \bigoplus_{i=1}^m \mathrm{Ext}^1_{T^*B}(0_{B*} \Omega^1_B(D), \CO_{F_i}).
\end{equation}
By adjunction, each extension group $\mathrm{Ext}^1_{T^*B}(0_{B*} \Omega^1_B(D), \CO_{F_i})$ on the right-hand side of (\ref{eqn57}) is 1-dimensional:
\[
\mathrm{Ext}^1_{T^*B}(0_{B*} \Omega^1_B(D), \CO_{F_i})  = \mathrm{Ext}^1_{F_i}(\BC_{p_i}, \CO_{F_i})\simeq \BC.
\]
Here $p_i$ is viewed as a point on $F_i$ lying in the intersection with the 0-section $B \subset T^*B$. We say that the admissible sheaf $\CA$ is \emph{good} if each summand of the extension class $[\CW_0^\CA]$ in~$\mathrm{Ext}^1_{T^*B}(0_{B*} \Omega^1_B(D), \CO_{F_i})$ is nonzero. In this case, up to scaling $\CO_{F_i}$ we may express~$\CW^{\CA}_0$ as an extension (\ref{eqn56}) whose extension class (\ref{eqn57}) is of the form
\[
(1,1, \dots, 1) \in  \bigoplus_{i=1}^m \mathrm{Ext}^1_{T^*B}(0_{B*} \Omega^1_B(D), \CO_{F_i}).
\]
Now assume that $\CA$ is a good admissible sheaf. The condition (c) further implies that $\CA$ fits into an extension
\[
0 \to \CW^\CA_0 \to \CA \to 0_{B*} \Omega_B^{1\vee} \to 0
\]
which yields a class
\[
[\CA] \in \mathrm{Ext}^1_{T^*B}(0_{B*}\Omega_B^{1\vee}, \CW_0^\CA) \xrightarrow{\rho_\CA} \mathrm{Ext}^1_{T^*B}(0_{B*}\Omega_B^{1\vee}, 0_{B*}\Omega^1_B(D)). 
\]
Here the map $\rho_\CA$ is induced by (\ref{eqn56}).

The following proposition provides a criterion for two good admissible sheaves to be isomorphic. This is also the only place where we need the assumption that $B$ is not proper.

\begin{prop}\label{prop6.2}
Assume $B$ non-proper. Let $\CA$ and $\CA'$ be two good admissible sheaves such that the classes $\rho_{\CA}([\CA])$ and $\rho_{\CA'}([\CA'])$ coincide:
\[
\rho_\CA([\CA]) =  \rho_{\CA'}([\CA'])\in  \mathrm{Ext}^1_{T^*B}(0_{B*}\Omega_B^{1\vee}, 0_{B*}\Omega^1_B(D)).
\]
Then we have
\[\CA \simeq \CA'.
\]
\end{prop}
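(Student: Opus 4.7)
The plan is to show that after identifying $\CW_0^\CA\simeq \CW_0^{\CA'}=:\CW_0$, the comparison map
\[
\rho\colon \mathrm{Ext}^1(0_{B*}\Omega_B^{1\vee},\CW_0)\to \mathrm{Ext}^1(0_{B*}\Omega_B^{1\vee},0_{B*}\Omega_B^1(D))
\]
is injective. The hypothesis $\rho_\CA([\CA])=\rho_{\CA'}([\CA'])$ will then force $[\CA]=[\CA']$ as extensions of $0_{B*}\Omega_B^{1\vee}$ by $\CW_0$, giving $\CA\simeq \CA'$. To construct the identification of the $\CW_0$'s I would use goodness: both classes $[\CW_0^\CA],[\CW_0^{\CA'}]\in \bigoplus_i \mathrm{Ext}^1(0_{B*}\Omega_B^1(D),\CO_{F_i})=\bigoplus_i\BC$ have all components nonzero, so by rescaling the $\CO_{F_i}$-summands of $\CW_{-1}^\CA$ and $\CW_{-1}^{\CA'}$ one may assume both equal $(1,\ldots,1)$. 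Any two such normalized extensions are canonically equivalent since the automorphism group $\mathrm{Hom}(0_{B*}\Omega_B^1(D),\bigoplus_i \CO_{F_i})$ vanishes (its source restricts on each $F_i$ to a skyscraper at $p_i$ while its target is torsion-free on $F_i$).

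Injectivity of $\rho$ is equivalent, via the long exact sequence attached to $0\to\bigoplus_i\CO_{F_i}\to\CW_0\to 0_{B*}\Omega_B^1(D)\to 0$, to surjectivity of the connecting map
\[
\partial\colon \mathrm{Hom}(0_{B*}\Omega_B^{1\vee}, 0_{B*}\Omega_B^1(D)) \to \mathrm{Ext}^1(0_{B*}\Omega_B^{1\vee}, \textstyle\bigoplus_i\CO_{F_i}).
\]
Adjunction along the zero section identifies the source with $H^0(B,(\Omega_B^1)^{\otimes 2}(D))$, while adjunction along each fiber $F_i$ (which meets $B$ transversely at $p_i$) together with $\mathrm{Ext}^1_{F_i}(\BC_{p_i},\CO_{F_i})=\BC$ identifies the target with $\bigoplus_i\BC$. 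Working locally near each $p_i$ with coordinates $(t,\xi)$ on $T^*B$ and the free resolution $0\to \CO\xrightarrow{\xi}\CO\to 0_{B*}\Omega_B^1(D)\to 0$, a direct computation of the Yoneda product with the normalized $[\CW_0]$ shows that a section $s$ with local form $g_i(t)\,dt^{\otimes 2}/t$ is sent to $g_i(0)\in\BC$ in the $i$-th factor. Thus $\partial$ coincides with the principal parts map
\[
H^0(B,(\Omega_B^1)^{\otimes 2}(D))\to \bigoplus_i \bigl((\Omega_B^1)^{\otimes 2}(D)/(\Omega_B^1)^{\otimes 2}\bigr)\big|_{p_i}.
\]

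Surjectivity of this principal parts map follows from the long exact sequence attached to $0\to (\Omega_B^1)^{\otimes 2}\to (\Omega_B^1)^{\otimes 2}(D)\to \bigoplus_i\BC\to 0$ on $B$, together with the vanishing $H^1(B,(\Omega_B^1)^{\otimes 2})=0$. The latter holds because any smooth irreducible non-proper curve over $\BC$ is affine, so coherent cohomology vanishes in positive degree; this is the only place where the non-properness hypothesis on $B$ enters. The formal neighborhood case $\CA,\CA'\in\mathrm{Coh}(\hat{B})$ is completely parallel: the relevant adjunctions, the transverse intersection of $B$ with each $\hat{F}_i$, and the vanishing $H^1$ on $B$ itself all persist after formal completion along $B$.

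The main obstacle I anticipate is the identification of $\partial$ with the principal parts map, which requires juggling several adjunction and Koszul identifications simultaneously and verifying that the abstract Yoneda product $s\mapsto s\cdot[\CW_0]$ matches the coordinate description $s\mapsto (g_i(0))_i$. Once this local identification is in place, the remainder of the argument is a clean application of Serre's affine vanishing and the equivalence of extensions with prescribed class.
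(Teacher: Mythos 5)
Your proof is correct and follows essentially the same route as the paper's. Both arguments reduce to showing injectivity of $\rho$ via surjectivity of the connecting map $\partial$ in the long exact sequence obtained by applying $\mathrm{Hom}(0_{B*}\Omega_B^{1\vee},-)$ to \eqref{eqn56}, and both ultimately invoke $H^1(B,(\Omega^1_B)^{\otimes 2}) = \mathrm{Ext}^1_B(\Omega_B^{1\vee},\Omega_B^1) = 0$, which holds because a non-proper smooth curve is affine. The one place you add content is the identification of $\partial$ with the residue/principal parts map: the paper simply declares its map (i) equal to (i)' (the map induced by the residue exact sequence $0\to\Omega^1_B\to\Omega^1_B(D)\to\bigoplus_i\BC_{p_i}\to 0$), while you sketch the local Koszul computation showing that the Yoneda product with the normalized class $(1,\ldots,1)$ sends a section with local form $g_i(t)\,dt^{\otimes 2}/t$ to $g_i(0)$. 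You also spell out the preliminary normalization step identifying $\CW_0^\CA\simeq\CW_0^{\CA'}$, which the paper leaves implicit in the sentence preceding the proposition; your observation that $\mathrm{Hom}_{T^*B}(0_{B*}\Omega^1_B(D),\bigoplus_i\CO_{F_i})=0$ (so the identification is canonical) is correct but not strictly required — mere existence of the isomorphism of extensions suffices.
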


\begin{proof}
Applying $\mathrm{Hom}_{T^*B}(0_{B*}\Omega^{1\vee}_B, - )$ to (\ref{eqn56}), we obtain the long exact sequence:
\begin{align*}
 \cdots    \rightarrow \mathrm{Hom}_{T^*B}(0_{B*}\Omega_B^{1\vee}, 0_{B*}\Omega^1_B(D)) \xrightarrow{\textup{(i)}} \bigoplus_{i=1}^m \mathrm{Ext}_{T^*B}^1(0_{B*}\Omega^{1\vee}_B, \CO_{F_i}) &  \\ \xrightarrow{\textup{(ii)}} \mathrm{Ext}^1_{T^*B}(0_{B*}\Omega_B^{1\vee}, \CW_0^\CA) \xrightarrow{\rho_{\CA}}  \mathrm{Ext}^1_{T^*B}(0_{B*}\Omega_B^{1\vee}, 0_{B*}\Omega^1_B(D)).& 
\end{align*}
We would like to show in the following that, for a good admissible sheaf $\CA$, the map (i) in the chain above is surjective, so that (ii) is 0. In particular, the extension class $[\CA]$ is completely characterized by its image $\rho_\CA([\CA])$ and the proposition follows.

To prove the surjectivity of (i), we consider the residue exact sequence
\[
 0 \to \Omega^1_B \to \Omega^1_B(D) \xrightarrow{\mathrm{res}} \bigoplus_{i=1}^m \BC_{p_i} \to 0.
\]
Applying $\mathrm{Hom}_B(\Omega_B^{1\vee}, - )$, this induces the long exact sequence
\[
\cdots \to \mathrm{Hom}_B(\Omega_B^{1\vee}, \Omega^1_B(D)) \xrightarrow{\textup{(i)'}}  \BC^{\oplus m} \to \mathrm{Ext}_B^1(\Omega_B^{1\vee}, \Omega^1_B) \to \cdots
\]
Using the fact that $B$ is not proper, we obtain that 
\[
\mathrm{Ext}_B^1(\Omega_B^{1\vee}, \Omega^1_B) = 0.
\]
Therefore the map (i)' above is surjective. On the other hand, the map (i)' recovers (i):
\[
\mathrm{Hom}_{T^*B}(0_{B*}\Omega_B^{1\vee}, 0_{B*}\Omega^1_B(D)) = \mathrm{Hom}_B(\Omega_B^{1\vee}, \Omega^1_B(D)) \xrightarrow{\textup{(i)} = \textup{(i)'}} \BC^{\oplus m} = \bigoplus_{i=1}^m \mathrm{Ext}_{T^*B}^1(0_{B*}\Omega^{1\vee}_B, \CO_{F_i}).
\]
Hence we conclude the surjectivity of (i), which completes the proof of the proposition.
\end{proof}

In the following three sections, we prove Theorem \ref{thm6.1} by showing that both sheaves obtained from Fourier--Mukai and the Hodge module theory respectively are good admissible sheaves, and their classes in the group 
\[
\mathrm{Ext}^1_{\hat{B}}(0_{B*}\Omega_B^{1\vee}, 0_{B*} \Omega^1_B(D))
\]
are both essentially governed by the cubic form (\ref{cubic2}).

\subsection{Hodge modules}\label{Sec6.4}
We first treat the Hodge module side. The Hodge module $j_{!*}\CV$ on $B$ can be described concretely using the canonical extension $(\overline{\CV}, F^\bullet, \nabla)$. More precisely, by \cite[3.10]{S2} we have
\[
j_{!*}\CV = \CD_B \cdot \overline{\CV} \subset \overline{\CV}(*D)
\]
where the $\CD_B$-action is induced by Deligne's meromorphic connection on $\overline{\CV}(*D)$, and
\begin{equation}\label{eq:ic}
F_kj_{!*}\CV = \sum_{i \geq 0} F_i\CD_B \cdot F_{k - i}\overline{\CV}.
\end{equation}
Each associated graded object $\mathrm{gr}_i(j_{!*}\CV) = F_ij_{!*}\CV /F_{i-1} j_{!*}\CV$ is a coherent sheaf on $B$. The coherent sheaf
\[
\mathrm{gr}(j_{!*}\CV) \in \mathrm{Coh}(T^*B)
\]
obtained from the Hodge module $j_{!*}\CV$ is then completely described by the quasi-coherent sheaf
\[
\bigoplus_{k\geq -1} \mathrm{gr}_k(j_{!*}\CV)\in \mathrm{QCoh}(B)
\]
together with the Higgs field 
\begin{equation}\label{Higgs_fields}
\overline{\nabla}: \mathrm{gr}_k(j_{!*}\CV) \to \mathrm{gr}_{k+1}(j_{!*}\CV)\otimes \Omega_B^1
\end{equation}
where by Griffiths transversality all the nontrivial Higgs fields only increase the index by 1. Following a direct calculation using the formulas above as in \cite[Section 2.4]{SY2}, we have that
\[
\mathrm{gr}_{-1}(j_{!*}\CV) \simeq \Omega_B^{1\vee}, \quad \mathrm{gr}_{0}(j_{!*}\CV) \simeq \Omega^1_B(D), \quad \mathrm{gr}_{k}(j_{!*}\CV) \simeq \bigoplus_{i=1}^m \BC_{p_i}, \quad k>0.
\]
Moreover the nontrivial Higgs fields (\ref{Higgs_fields}) are given by (\ref{cubic2}) for $k=-1$, the nontrivial residue map
\[
\Omega^1_B(D) \xrightarrow{\mathrm{res}} \bigoplus_{i=1}^m \BC_{p_i}
\]
for $k=0$, and the identity maps 
\[
\mathrm{id}: \bigoplus_{i=1}^m \BC_{p_i} \xrightarrow{\simeq } \bigoplus_{i=1}^m \BC_{p_i}.
\]
for all $k >0$. This allows us to express the object
\[
\mathrm{gr}(j_{!*}\CV) \in \mathrm{Coh}(T^*B)
\]
as a good admissible sheaf 
\[
\CW_{-1}^{\mathrm{HM}} \subset \CW_{0}^{\mathrm{HM}} \subset \mathrm{gr}(j_{!*}\CV).
\]
Here the subsheaf $\CW_i^{\mathrm{HM}} \in \mathrm{Coh}(T^*B)$ is given by
\[
\bigoplus_{k \geq -i} \mathrm{gr}_k(j_{!*}\CV)
\]
endowed with the restricted Higgs field
\[
\overline{\nabla}: \bigoplus_{k \geq -i} \mathrm{gr}_k(j_{!*}\CV) \to \left(\bigoplus_{k \geq -i + 1} \mathrm{gr}_k(j_{!*}\CV) \right) \otimes \Omega^1_B.
\]
Finally, an identical argument of Section \ref{Sec5.3} yields a splitting
\begin{equation}\label{splitting_HM}
\mathrm{Hom}_{T^*{B}}( 0_{B*} \Omega_B^{1\vee}, 0_{B*} \Omega^1_B(D)[1] ) = \mathrm{Hom}_B(\Omega_B^{1\vee}, \Omega^1_B(D)[1]) \oplus H^0(B, (\Omega^1_B)^{\otimes 3}(D)).
\end{equation}
The class 
\[
\rho_{\mathrm{HM}}([\mathrm{gr}(j_{!*}\CV)]) \in \mathrm{Hom}_{T^*{B}}( 0_{B*} \Omega_B^{1\vee}, 0_{B*} \Omega^1_B(D)[1] )
\]
is then $(0, [\overline{\nabla}])$ via the splitting (\ref{splitting_HM}) with $[\overline{\nabla}]$ given by (\ref{cubic2}).

\subsection{Fourier--Mukai}\label{Sec6.5}


For our elliptic fibration $\pi_M: M \to B$ with a section $s_B: B \to M$, the partial Fourier--Mukai transform 
\[
\phi_{\mathrm{FM}}: D^b\mathrm{Coh}(M) \to D^b\mathrm{Coh}(P)
\]
can actually be upgraded to a \emph{full} Fourier--Mukai transform 
\[
\widetilde{\phi}_{\mathrm{FM}}: D^b\mathrm{Coh}(M) \to D^b\mathrm{Coh}(M)
\]
as we review in the following. 

We denote by $M^\vee \to B$ the relative compactified Jacobian fibration parameterizing torsion-free, rank 1, degree 0 sheaves on the fibers of $\pi: M \to B$. Since $M^\vee$ is a fine moduli space with a $0$-section $0_B: B \rightarrow M^\vee$, there is a universal Poincar\'e sheaf on $M \times_B M^\vee$. We choose the universal Poincar\'e sheaf $\CP$ such that the induced Fourier--Mukai transform
\[
\widetilde{\phi}_{\mathrm{FM}}: D^b\mathrm{Coh}(M) \xrightarrow{\simeq} D^b\mathrm{Coh}(M^\vee)
\]
satisfies $\widetilde{\phi}_{\mathrm{FM}}(\CO_M) \simeq 0_{B*} \CO_B$. We have that $M^\vee$ is naturally isomorphic to $M$ by
\[
M \xrightarrow{\simeq} M^\vee, \quad x \mapsto \iota_{s*}(\mathfrak{m}^\vee_x \otimes \CO_{M_s}(-s))
\]
where $M_s$ is the closed fiber containing $x$, $\iota_s: M_s \hookrightarrow M$ is the closed embedding, $\mathfrak{m}_x$ is the ideal sheaf of $x$ in $M_s$, and $s$ is viewed as a point on $M_s$ lying in the intersection with the section $B \subset M$. From now on we identify $M^\vee$ with $M$. The group scheme $P \to B$ is then obtained as an open surface $P\subset M$ removing the nodes of the singular fibers. The partial Fourier--Mukai transform $\phi_{\mathrm{FM}}$ is the composition of the full Fourier--Mukai $\widetilde{\phi}_{\mathrm{FM}}$ with the restriction map associated with $P \subset M$.

\begin{lem}\label{lem6.3} The following hold for the Fourier--Mukai transform $\widetilde{\phi}_{\mathrm{FM}}$.
\begin{enumerate}
    \item[(i)] For $\CK \in D^b\mathrm{Coh}(B)$, we have
    \[
    \widetilde{\phi}_{\mathrm{FM}}(\pi_M^* \CK) \simeq 0_{B*} \CK.
    \]
    \item[(ii)] For a point $x \in M_s$ in a closed fiber $\iota_s: M_s \hookrightarrow M$, we have
    \[
    \widetilde{\phi}_{\mathrm{FM}}(\BC_x) \simeq \iota_{s*} (\mathfrak{m}_x \otimes \CO_{M_s}(s))[1].
    \]
    \item[(iii)] Let $F_i$ be the nodal fiber of $\pi: M \to B$ over $p_i$ with $x_i \in F_i$ the node. Let $\nu_i: \widetilde{F_i} \to F_i$ be the normalization. Then we have
    \[
    \widetilde{\phi}_{\mathrm{FM}}(\nu_{i*} \CO_{\widetilde{F}_i}) \simeq \iota_{p_i*} \mathfrak{m}_{x_i}[1].
    \]
\end{enumerate}
\end{lem}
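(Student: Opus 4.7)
\textbf{Proof plan for Lemma \ref{lem6.3}.} All three parts reduce to base change on $M \times_B M^\vee$ together with the projection formula and the defining normalization $\widetilde{\phi}_{\mathrm{FM}}(\CO_M) \simeq 0_{B*}\CO_B$. Part (i) is the direct analogue of Proposition \ref{prop3.1} and I would copy its proof with $P$ replaced by $M^\vee$: using $\pi_M \circ q_M = \pi_{M^\vee} \circ q_{M^\vee}$, the projection formula extracts $\pi_{M^\vee}^*\CK$ from $\widetilde{\phi}_{\mathrm{FM}}(\pi_M^*\CK) = Rq_{M^\vee*}(q_M^*\pi_M^*\CK \otimes \CP)$, reducing the computation to $\pi_{M^\vee}^*\CK \otimes \widetilde{\phi}_{\mathrm{FM}}(\CO_M) \simeq \pi_{M^\vee}^*\CK \otimes 0_{B*}\CO_B \simeq 0_{B*}\CK$ by a second application of the projection formula along $0_B$.

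For part (ii) I will apply base change along the closed embedding $\{x\}\times_B M^\vee \hookrightarrow M\times_B M^\vee$, obtaining $\widetilde{\phi}_{\mathrm{FM}}(\BC_x) \simeq \iota_{s*}\bigl(\CP|_{\{x\}\times M_s^\vee}\bigr)$ up to the overall normalization shift. The central step is to recognize $\CP|_{\{x\}\times M_s^\vee}$, viewed on $M_s^\vee \simeq M_s$ via the identification $x \mapsto \iota_{s*}(\mathfrak{m}_x^\vee \otimes \CO_{M_s}(-s))$, as the rank-$1$ torsion-free sheaf $\mathfrak{m}_x \otimes \CO_{M_s}(s)$. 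On smooth fibers this is Mukai's classical calculation for a self-dual elliptic curve; on the nodal fibers it is part of Arinkin--Fedorov's description of the Fourier--Mukai transform for compactified Jacobians of integral curves. The cohomological shift $[1]$ comes from the same normalization that places $\widetilde{\phi}_{\mathrm{FM}}(\CO_M)$ in degree $0$.

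Part (iii) will follow from (i), (ii), and the short exact sequence
\[
0 \to \CO_{F_i} \to \nu_{i*}\CO_{\widetilde{F}_i} \to \BC_{x_i} \to 0
\]
on $M$. Since $\pi_M$ is flat, $\CO_{F_i} = \pi_M^*\BC_{p_i}$, so (i) yields $\widetilde{\phi}_{\mathrm{FM}}(\CO_{F_i}) \simeq 0_{B*}\BC_{p_i} = \BC_{s_i}$, while (ii) gives $\widetilde{\phi}_{\mathrm{FM}}(\BC_{x_i}) \simeq \iota_{p_i*}(\mathfrak{m}_{x_i}\otimes \CO_{F_i}(s_i))[1]$. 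Applying $\widetilde{\phi}_{\mathrm{FM}}$ produces a distinguished triangle whose boundary $\delta$ lives in the one-dimensional space $\Hom(\iota_{p_i*}(\mathfrak{m}_{x_i}\otimes \CO_{F_i}(s_i)), \BC_{s_i})$ and is, up to scale, the restriction of $\mathfrak{m}_{x_i}\otimes \CO_{F_i}(s_i)$ to $s_i$. To conclude I will show $\delta \neq 0$: if $\delta = 0$, the triangle would split and force $\dim \mathrm{End}(\widetilde{\phi}_{\mathrm{FM}}(\nu_{i*}\CO_{\widetilde{F}_i})) \geq 2$, contradicting $\mathrm{End}(\nu_{i*}\CO_{\widetilde{F}_i}) \simeq H^0(\widetilde{F}_i, \CO_{\widetilde{F}_i}) = \BC$ (using $\widetilde{F}_i \simeq \BP^1$) via the equivalence $\widetilde{\phi}_{\mathrm{FM}}$. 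With $\delta \neq 0$, the triangle is realized by tensoring $0 \to \CO_{F_i} \to \CO_{F_i}(s_i) \to \BC_{s_i} \to 0$ by $\mathfrak{m}_{x_i}$ (locally free near $s_i$) and pushing forward, yielding $\widetilde{\phi}_{\mathrm{FM}}(\nu_{i*}\CO_{\widetilde{F}_i}) \simeq \iota_{p_i*}\mathfrak{m}_{x_i}[1]$.

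The principal obstacle I anticipate is part (ii): pinning down $\CP|_{\{x\}\times M_s^\vee}$ as $\mathfrak{m}_x \otimes \CO_{M_s}(s)$ — with the correct duality direction and the cohomological shift — uniformly across smooth and nodal fibers requires careful bookkeeping with the universal property of the Poincar\'e sheaf on the compactified Jacobian and with the normalization conventions. Once (ii) is settled, (i) and (iii) are largely formal, with the only remaining subtlety in (iii) being the endomorphism-count argument that ensures $\delta \neq 0$.
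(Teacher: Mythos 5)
Your proof is correct and follows essentially the same route as the paper's. The paper dismisses (i) and (ii) as easy exercises deduced from the definition of $\widetilde{\phi}_{\mathrm{FM}}$, and for (iii) it applies $\widetilde{\phi}_{\mathrm{FM}}$ to the short exact sequence $0 \to \CO_{F_i} \to \nu_{i*}\CO_{\widetilde{F}_i} \to \BC_{x_i} \to 0$, obtaining exactly your triangle, and shows the connecting map must be nonzero; the only difference is the final contradiction, where the paper pulls the hypothetical splitting back through the equivalence $\widetilde{\phi}_{\mathrm{FM}}^{-1}$ to split the original sequence (impossible since the torsion-free sheaf $\nu_{i*}\CO_{\widetilde{F}_i}$ cannot contain $\BC_{x_i}$ as a direct summand), while you count $\dim\mathrm{End}(\nu_{i*}\CO_{\widetilde{F}_i}) = 1$ via the equivalence — both are sound, one-line variants of the same step.
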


\begin{proof}
(i, ii) are easy exercises deduced from the definition. We now prove (iii). We consider the short exact sequence
\begin{equation}\label{eqn60}
0 \to \CO_{F_i} \to \nu_{i*} \CO_{\widetilde{F}_i} \to \BC_{x_i} \to 0.
\end{equation}
Applying the functor $\widetilde{\phi}_{\mathrm{FM}}$, we obtain from (i, ii) the exact triangle
\begin{equation}\label{eqn61}
\BC_{p_i} \to \widetilde{\phi}_{\mathrm{FM}}(\nu_{i*}\CO_{\widetilde{F}_i}) \to  \iota_{p_i*} (\mathfrak{m}_{x_i} \otimes \CO_{F_i}(p_i))[1] \xrightarrow{+1}.
\end{equation}
The associated long exact sequence reads:
\[
0 \to \CH^{-1}\left( \widetilde{\phi}_{\mathrm{FM}}(\nu_{i*}\CO_{\widetilde{F}_i})  \right)  \to\iota_{p_i*} (\mathfrak{m}_{x_i} \otimes \CO_{F_i}(p_i)) \to \BC_{p_i} \xrightarrow{(*)} \CH^{0}\left( \widetilde{\phi}_{\mathrm{FM}}(\nu_{i*}\CO_{\widetilde{F}_i})  \right) \to 0.
\]
It suffices to show that the map $(*)$ in the above sequence is trivial. Assume it is not. Then the arrow $\iota_{p_i*} (\mathfrak{m}_{x_i} \otimes \CO_{F_i}(p_i)) \to \BC_{p_i}$ has to be trivial which forces (\ref{eqn61}) to split; equivalently~(\ref{eqn60}) splits which is a contradiction. This completes the proof of (iii).
\end{proof}

Now we show that $\phi_{\mathrm{FM}}(\Omega_M^1)$ is a good admissible sheaf after restricting to a formal neighborhood of the $0$-section $B\subset M$. Instead we work with the object
\begin{equation*}\label{FM_shf}
\widetilde{\phi}_{\mathrm{FM}}(\Omega_M^1) \in D^b\mathrm{Coh}(M).
\end{equation*}
We consider the following triangle of morphisms
\begin{equation}\label{triangle1}
    \begin{tikzcd}[column sep=small]
    \Omega_M^1 \arrow[dr, ""] \arrow[rr, ""] & & \Omega_M^1(\log F) \arrow[dl, ""] \\
       & \Omega^1_{M/B}(\log F)  & 
\end{tikzcd}
\end{equation}
where $\Omega_M^1 \rightarrow \Omega_M^1(\log F)$ and $\Omega_M^1(\log F) \to \Omega^1_{M/B}(\log F)$ are the natural maps, and the third arrow is the composition of these two maps. By the octahedral axiom of triangulated categories, the cones associated with the three maps of (\ref{triangle1}) form an exact triangle
\[
\bigoplus_{i=1}^m \nu_{i*} \CO_{\widetilde{F}_i}[-1] \to \CK_0 \to \pi_{M}^* \Omega^1_B(D) \xrightarrow{+1}.
\]
Here $\CK_0:= \mathrm{cone}\left( \Omega_{M}^1 \to \Omega_{M/B}^1(\log F)\right)[-1]$. Applying the functor $\widetilde{\phi}_{\mathrm{FM}}$ to this exact triangle, we obtain from Lemma \ref{lem6.3} that the object $\widetilde{\phi}_{\mathrm{FM}}(\CK_0)$ is a sheaf concentrated in degree 0 which fits into the exact sequence
\begin{equation}\label{eq64}
0 \to \bigoplus_{i=1}^m \iota_{p_i*}\mathfrak{m}_{x_i} \to \widetilde{\phi}_{\mathrm{FM}}(\CK_0) \to 0_{B*} \Omega^1_B(D) \to 0.
\end{equation}
We set 
\[
\widetilde{\CW}_{-1}^{\mathrm{FM}}: = \bigoplus_{i=1}^m \iota_{p_i*}\mathfrak{m}_{x_i}, \quad \widetilde{\CW}_0^{\mathrm{FM}}:= \widetilde{\phi}_{\mathrm{FM}}(\CK_0).
\]

\begin{prop}\label{prop6.4}
The object $\widetilde{\phi}_{\mathrm{FM}}(\Omega_M^1)$ is a coherent sheaf on $M$ with an increasing filtration of subsheaves:
\[
\widetilde{\CW}_{-1}^{\mathrm{FM}} \subset \widetilde{\CW}_0^{\mathrm{FM}} \subset \widetilde{\phi}_{\mathrm{FM}}(\Omega_M^1) 
\]
which satisfies
\begin{enumerate}
    \item[(a)] $\widetilde{\CW}_{-1}^{\mathrm{FM}} = \bigoplus_{i=1}^m \iota_{p_i*}\mathfrak{m}_{x_i}$,  \item[(b)] $\widetilde{\CW}^\mathrm{FM}_0/ \widetilde{\CW}^\mathrm{FM}_{-1} \simeq 0_{B*}\Omega^1_B(D)$, and
    \item[(c)] $\widetilde{\phi}_{\mathrm{FM}}(\Omega_M^1)/\widetilde{\CW}^\mathrm{FM}_0 \simeq 0_{B*}\Omega_B^{1\vee}$.
\end{enumerate}
\end{prop}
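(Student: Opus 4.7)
The strategy is to prove Proposition \ref{prop6.4} by applying $\widetilde{\phi}_{\mathrm{FM}}$ to the defining exact triangle of $\CK_0$ and reading off the filtration from the already-established description of $\widetilde{\phi}_{\mathrm{FM}}(\CK_0)$ in \eqref{eq64}.

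Concretely, the definition $\CK_0 = \mathrm{cone}(\Omega_M^1 \to \Omega^1_{M/B}(\log F))[-1]$ supplies, after rotation, the exact triangle
\[
\CK_0 \to \Omega_M^1 \to \Omega^1_{M/B}(\log F) \xrightarrow{+1}
\]
in $D^b\mathrm{Coh}(M)$. I would apply $\widetilde{\phi}_{\mathrm{FM}}$ and identify each outer term. The isomorphism $\Omega^1_{M/B}(\log F) \simeq \pi_M^*\Omega_B^{1\vee}$ from \eqref{6.2_1} together with Lemma \ref{lem6.3}(i) identifies the rightmost term as the sheaf $0_{B*}\Omega_B^{1\vee}$. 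The leftmost term has already been computed just above the proposition: by \eqref{eq64}, the object $\widetilde{\phi}_{\mathrm{FM}}(\CK_0)$ is the coherent sheaf $\widetilde{\CW}_0^{\mathrm{FM}}$ sitting in the short exact sequence
\[
0 \to \bigoplus_{i=1}^m \iota_{p_i*}\mathfrak{m}_{x_i} \to \widetilde{\CW}_0^{\mathrm{FM}} \to 0_{B*}\Omega_B^1(D) \to 0.
\]

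Since both outer terms of the Fourier--Mukai triangle are concentrated in cohomological degree $0$, the long exact sequence of cohomology sheaves forces $\widetilde{\phi}_{\mathrm{FM}}(\Omega_M^1)$ to also be a sheaf, and the triangle collapses to a short exact sequence
\[
0 \to \widetilde{\CW}_0^{\mathrm{FM}} \to \widetilde{\phi}_{\mathrm{FM}}(\Omega_M^1) \to 0_{B*}\Omega_B^{1\vee} \to 0.
\]
Taking $\widetilde{\CW}_{-1}^{\mathrm{FM}} \subset \widetilde{\CW}_0^{\mathrm{FM}} \subset \widetilde{\phi}_{\mathrm{FM}}(\Omega_M^1)$ as the filtration, conditions (a) and (b) come from \eqref{eq64}, while (c) is the rightmost cokernel of the displayed sequence.

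The argument is essentially formal once \eqref{eq64} is in place, so I do not anticipate a genuine obstacle within the proof of the proposition itself. The real work has been done upstream: the case analysis in Lemma \ref{lem6.3}(iii) and the vanishing of the connecting map there are what turn \eqref{eq64} into an honest short exact sequence of sheaves, and the identification $\Omega^1_{M/B}(\log F) \simeq \pi_M^*\Omega_B^{1\vee}$ is what allows $\widetilde{\phi}_{\mathrm{FM}}$ to convert the right-hand term into a skyscraper along the $0$-section. Any residual concern that $\widetilde{\phi}_{\mathrm{FM}}(\Omega_M^1)$ might be a two-term complex rather than a sheaf is automatically dispatched by the cohomology long exact sequence, given that the two outer terms are already known to be sheaves.
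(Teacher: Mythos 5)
Your proof is correct and follows the same route as the paper: apply $\widetilde{\phi}_{\mathrm{FM}}$ to the defining triangle of $\CK_0$, identify the cone term as $0_{B*}\Omega_B^{1\vee}$ via \eqref{6.2_1} and Lemma \ref{lem6.3}(i), and note that since $\widetilde{\phi}_{\mathrm{FM}}(\CK_0)$ is a sheaf (from \eqref{eq64}) the triangle collapses to a short exact sequence giving (c), with (a) and (b) read off from \eqref{eq64}. Your rotation of the triangle and the explicit appeal to the long exact sequence are only cosmetic variations.
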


\begin{proof}
By the definition of $\CK_0$, the isomorphism (\ref{6.2_1}), and Lemma \ref{lem6.3} (i), we obtain an exact triangle
\[
\widetilde{\phi}_{\mathrm{FM}}(\Omega_M^1)  \to 0_{B*}\Omega_B^{1\vee} \to  \widetilde{\phi}_{\mathrm{FM}}(\CK_0)[1] \xrightarrow{+1}.
\]
Since $\widetilde{\CW}_0^{\mathrm{FM}} = \widetilde{\phi}_{\mathrm{FM}}(\CK_0)$ is a sheaf concentrated in degree 0, the above exact triangle yields a short exact sequence
\[
0 \to \widetilde{\CW}_0^{\mathrm{FM}} \to  \widetilde{\phi}_{\mathrm{FM}}(\Omega_M^1) \to  0_{B*}\Omega_B^{1\vee} \to 0.
\]
This proves (c). For the remaining parts, we note that (a) is given by the definition of $\widetilde{\CW}_{-1}^{\mathrm{FM}}$, and (b) follows from the short exact sequence (\ref{eq64}).
\end{proof}

Analogously to (\ref{eqn56}), the description of $\widetilde{\CW}^\mathrm{FM}_0$ gives an extension (\ref{eq64}) which induces a class
\[
[\widetilde{\CW}^\mathrm{FM}_0] \in \bigoplus_{i=1}^m \mathrm{Ext}_M^1(0_{B*} \Omega^1_B(D), \iota_{p_i*}\mathfrak{m}_{x_i}).
\]
Each extension group on the right-hand side is 1-dimensional.

\begin{prop}\label{prop6.5}
Each summand of the extension class $[\widetilde{\CW}^\mathrm{FM}_0]$ in
\[
\mathrm{Ext}_M^1(0_{B*} \Omega^1_B(D), \iota_{p_i*}\mathfrak{m}_{x_i})
\]
is nonzero.
\end{prop}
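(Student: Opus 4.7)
The plan is to transport the extension across the Fourier--Mukai equivalence $\widetilde{\phi}_{\mathrm{FM}}$ and reduce the non-vanishing to a local computation of residues. Recall that $\widetilde{\CW}_0^{\mathrm{FM}} = \widetilde{\phi}_{\mathrm{FM}}(\CK_0)$, where $\CK_0 = \mathrm{cone}(\Omega_M^1 \to \Omega_{M/B}^1(\log F))[-1]$ fits into the exact triangle
\[
\bigoplus_{i=1}^m \nu_{i*}\CO_{\widetilde{F}_i}[-1] \to \CK_0 \to \pi_M^*\Omega_B^1(D) \xrightarrow{d} \bigoplus_{i=1}^m \nu_{i*}\CO_{\widetilde{F}_i}
\]
obtained by the octahedral axiom applied to the composition $\Omega_M^1 \xrightarrow{a} \Omega_M^1(\log F) \xrightarrow{b} \Omega_{M/B}^1(\log F)$. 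Since $\widetilde{\phi}_{\mathrm{FM}}$ is an equivalence of triangulated categories, it sends this triangle to the extension (\ref{eq64}) and identifies the extension classes componentwise, giving
\[
[\widetilde{\CW}_0^{\mathrm{FM}}] = \bigoplus_{i=1}^m \widetilde{\phi}_{\mathrm{FM}}(d_i) \in \bigoplus_{i=1}^m \mathrm{Ext}_M^1(0_{B*}\Omega_B^1(D), \iota_{p_i*}\mathfrak{m}_{x_i}),
\]
where $d_i$ denotes the $i$-th component of $d$. It therefore suffices to show that each $d_i$ is nonzero.

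The next step, which is the main subtle point, is to give an explicit description of the connecting map $d$. Tracing through the octahedral construction using the short exact sequences $0 \to \Omega_M^1 \to \Omega_M^1(\log F) \to \bigoplus_i \nu_{i*}\CO_{\widetilde{F}_i} \to 0$ and $0 \to \pi_M^*\Omega_B^1(D) \to \Omega_M^1(\log F) \to \Omega_{M/B}^1(\log F) \to 0$, the map $d$ is identified with the composition
\[
\pi_M^*\Omega_B^1(D) = \ker(b) \hookrightarrow \Omega_M^1(\log F) \xrightarrow{\mathrm{res}} \Omega_M^1(\log F)/\Omega_M^1 \simeq \bigoplus_{i=1}^m \nu_{i*}\CO_{\widetilde{F}_i},
\]
i.e., with the residue map applied to pullbacks of logarithmic one-forms on $B$. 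This identification reduces the problem to a purely local assertion on $M$.

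Finally, each $d_i$ is nonzero by a local computation at any smooth point of $F_i$. Choosing a coordinate $t$ on $B$ at $p_i$ so that $\Omega_B^1(D)$ is locally generated by $dt/t$, together with local coordinates $(u, v)$ on $M$ at a smooth point of $F_i$ with $F_i = \{u = 0\}$ and $\pi_M^*t = u$, we have $\pi_M^*(dt/t) = du/u$, whose residue along $F_i$ is the constant $1 \in \nu_{i*}\CO_{\widetilde{F}_i}$. Hence $d_i \neq 0$, which concludes the proof. The main obstacle in this plan is the identification of the connecting map $d$ with the residue map via the octahedral axiom; once this is in place, both the reduction across $\widetilde{\phi}_{\mathrm{FM}}$ and the residue computation are standard.
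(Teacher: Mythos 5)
Your proof is correct, and it takes a genuinely different (and arguably more direct) route than the paper. The paper argues by contradiction: assuming the $i$-th summand of $[\widetilde{\CW}_0^{\mathrm{FM}}]$ vanishes, it applies $\widetilde{\phi}_{\mathrm{FM}}^{-1}$ to conclude $\CH^1(\CK_0) \simeq \nu_*\CO_{\widetilde{F}}$, and then invokes a \emph{second} octahedral decomposition, built from the factorization $\Omega_M^1 \to \Omega_{M/B}^1 \to \Omega_{M/B}^1(\log F)$ (triangle~\eqref{triangle2}), to show $\CH^1(\CK_0) \simeq \BC_x$, a contradiction. You instead unwind the connecting map $d$ of the first octahedron explicitly. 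This is sound: the octahedral axiom does identify $d$, up to sign, with the composition
\[
\pi_M^*\Omega_B^1(D) = \ker\bigl(\Omega_M^1(\log F) \to \Omega_{M/B}^1(\log F)\bigr) \hookrightarrow \Omega_M^1(\log F) \xrightarrow{\ \mathrm{res}\ } \bigoplus_{i=1}^m \nu_{i*}\CO_{\widetilde{F}_i},
\]
and the residue of $\pi_M^*(dt/t) = du/u$ along $F_i = \{u = 0\}$ is indeed the constant $1$. Since $\widetilde{\phi}_{\mathrm{FM}}$ is an equivalence, $\widetilde{\phi}_{\mathrm{FM}}(d_i) = 0$ iff $d_i = 0$, so your non-vanishing conclusion transports correctly. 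In fact your computation shows slightly more: the restriction $d|_{F_i}$ is the natural inclusion $\CO_{F_i} \hookrightarrow \nu_{i*}\CO_{\widetilde{F}_i}$, so $\mathrm{coker}(d_i) \simeq \BC_{x_i}$; this is exactly the identification $\CH^1(\CK_0) \simeq \BC_x$ that the paper obtains from the second octahedron. The only delicate point, which you correctly flagged, is the identification of $d$ through the octahedral axiom; the sign ambiguity there is harmless for non-vanishing.
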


\begin{proof}
Since this is a local question, we may assume that $\pi: M \to B$ only has one singular nodal fiber $F \subset M$ over $p \in B$ with $x\in F$ the node. We need to show that the extension 
\[
0 \to \iota_{p*}\mathfrak{m}_{x}\to \widetilde{\CW}^{\mathrm{FM}}_0 \to 0_{B*}  \Omega^1_B(P) \to 0, \quad \widetilde{\CW}^{\mathrm{FM}}_0  =  \widetilde{\phi}_{\mathrm{FM}}(\CK_0)
\]
is nontrivial. Assume this is trivial. Then applying the inverse Fourier--Mukai $\widetilde{\phi}^{-1}_{\mathrm{FM}}$ to this exact sequence, we obtain that $\CK_0$ has nontrivial cohomology in degrees 0 and 1 with 
\begin{equation}\label{contra}
\CH^1(\CK_0) \simeq \nu_* \CO_{\widetilde{F}}.
\end{equation}
Here $\nu: \widetilde{F} \to F$ is the normalization.

To reach a contradiction, now we describe $\CK_0$ by another triangle of morphisms
\begin{equation}\label{triangle2}
    \begin{tikzcd}[column sep=small]
    \Omega_M^1 \arrow[dr, ""] \arrow[rr, ""] & & \Omega_{M/B}^1 \arrow[dl, ""] \\
       & \Omega^1_{M/B}(\log F).  & 
\end{tikzcd}
\end{equation}
Here $\Omega_M^1 \to \Omega^1_{M/B}$ and $\Omega^1_{M/B} \to \Omega^1_{M/B}(\log F)$ are the natural maps, and their composition recovers the map $\Omega_{M}^1 \to \Omega_{M/B}^1(\log F)$ of (\ref{triangle1}). The octahedral axiom yields that the cones associated with the three maps of (\ref{triangle2}) form an exact triangle
\[
\pi_M^*\Omega^1_B \to \CK_0 \to \BC_x[-1] \xrightarrow{+1}
\]
where $\BC_x \simeq\mathrm{cone}\left( \Omega_{M/B}^1 \to \Omega_{M/B}^1(\log F)\right)$. Consequently, we have
\[
\CH^1(\CK_0) \simeq \BC_x
\]
which contradicts (\ref{contra}). 
\end{proof}

The extension class
\[
[\widetilde{\phi}_{\mathrm{FM}}(\Omega_M^1) ] \in \mathrm{Ext}_M^1(0_{B*}\Omega_B^{1\vee}, \widetilde{\CW}_0^{\mathrm{FM}} ) 
\]
is sent naturally to a class
\begin{equation}\label{Ext1}
\widetilde{\rho}([\widetilde{\phi}_{\mathrm{FM}}(\Omega_M^1) ] ) \in  \mathrm{Ext}_M^1(0_{B*}\Omega_B^{1\vee}, 0_{B*}\Omega^1_B(D) )
\end{equation}
via the natural morphism induced by Proposition \ref{prop6.4}:
\[
\widetilde{\rho}: \mathrm{Ext}_M^1(0_{B*}\Omega_B^{1\vee}, \widetilde{\CW}_0^{\mathrm{FM}} )  \to \mathrm{Ext}_M^1(0_{B*}\Omega_B^{1\vee}, 0_{B*}\Omega^1_B(D) ). 
\]
Applying the functor $\widetilde{\phi}_{\mathrm{FM}}$ to the triangle (\ref{triangle1}), we see that the class (\ref{Ext1}) represents the extension
\[
 \widetilde{\phi}_{\mathrm{FM}}(\pi_M^*\Omega^1_B(D)) \to  \widetilde{\phi}_{\mathrm{FM}}(\Omega^1_{M}(\log F)) \to \widetilde{\phi}_{\mathrm{FM}}(\Omega^1_{M/B}(\log F)) \xrightarrow{+1},
 \]
which is the exact triangle obtained by applying $\widetilde{\phi}_{\mathrm{FM}}$ to the exact sequence (\ref{6.2_0}). 

We note that both sheaves $0_{B*}\Omega_B^{1\vee}$ and $0_{B*}\Omega^1_B(D)$ are supported on the $0$-section. Therefore we have a natural isomorphism
\[
\mathrm{Ext}_M^1(0_{B*}\Omega_B^{1\vee}, 0_{B*}\Omega^1_B(D) ) = \mathrm{Ext}_{\hat{B}}^1(0_{B*}\Omega_B^{1\vee}, 0_{B*}\Omega^1_B(D) ).
\]
An identical argument as in Section \ref{Sec5.3} yields a splitting
\[
\mathrm{Ext}_{\hat{B}}^1(0_{B*}\Omega_B^{1\vee}, 0_{B*}\Omega^1_B(D) ) = \mathrm{Ext}_{{B}}^1(\Omega_B^{1\vee}, 0_{B*}\Omega_B(D)) \oplus H^0(B, (\Omega^1_B)^{\otimes 3}(D));
\]
this is the counter-part of (\ref{splitting_HM}) on the Fourier--Mukai side. Furthermore, the class (\ref{Ext1}) is~$(0, [\overline{\nabla}])$ with $[\overline{\nabla}]$ given by (\ref{cubic2}).

\subsection{Proof of Theorem \ref{thm6.1}}
We fix an identification of the formal neighborhoods of the $0$-sections in $T^*B$ and $M$ respectively using Proposition \ref{prop1.4}; we denote them by $\hat{B}$ uniformly.

On the Hodge module side, by Section \ref{Sec6.4} $\mathrm{gr}(j_{!*}\CV)$ is a good admissible sheaf on $T^*B$. Therefore its restriction 
\[
\mathrm{gr}(j_{!*}\CV)|_{\hat{B}}
\]
is a good admissible sheaf on $\hat{B}$.

On the Fourier--Mukai side, Propositions \ref{prop6.4} and \ref{prop6.5} imply that
\[
\phi_{\mathrm{FM}}(\Omega_M^1)|_{\hat{B}} = \widetilde{\phi}_{\mathrm{FM}}(\Omega_M^1)|_{\hat{B}}
\]
is a good admissible sheaf on $\hat{B}$. 

Furthermore, by the discussion at the ends of Sections \ref{Sec6.4} and \ref{Sec6.5} respectively, we have
\[
\rho_{\mathrm{HM}}([ \mathrm{gr}(j_{!*}\CV)|_{\hat{B}} ]) =  \rho_{\mathrm{FM}}([ \phi_{\mathrm{FM}}(\Omega_M^1)|_{\hat{B}} ]) = (0,[\overline{\nabla}])\in \mathrm{Ext}_{\hat{B}}^1(0_{B*}\Omega_B^{1\vee}, 0_{B*}\Omega^1_B(D)). \]
Finally we apply Proposition \ref{prop6.2} and conclude that the two good admissible sheaves obtained from the Hodge module and the Fourier--Mukai transform are isomorphic. This completes the proof. \qed

\subsection{Remarks on the cuspidal case}\label{sec6.7}
In fact the same strategy also applies to cuspidal fibers.\footnote{As we work with an elliptic fibration with integral fibers, nodes and cusps are the only possible singularities.} Here we briefly sketch the key steps and leave the details to the interested reader.

From now on we allow the elliptic fibration $\pi_M : M \to B$ to have nodal fibers $F_1, \ldots, F_m$ over $p_1, \ldots, p_m \in B$, and cuspidal fibers $G_1, \ldots, G_l$ over $q_1, \ldots, q_l \in B$. The rest of the assumptions on $\pi_M$ remains the same as in Section \ref{sec:6.1}, including $B$ non-proper. We set
\[
D := \sum_{i = 1}^mp_i \subset B, \quad C := \sum_{i = 1}^lq_i \subset B, \quad F := \sum_{i = 1}^mF_i \subset M, \quad G := \sum_{i = 1}^lG_i \subset M.
\]

In the presence of cusps, the notion of an \emph{admissible sheaf} $\CA \in \mathrm{Coh}(T^*B)$ should be altered to a $4$-step filtration
\[
\CW^{\CA, \mathrm{red}}_{-1} \subset \CW_{-1}^\CA \subset \CW_{0}^\CA \subset \CA
\]
which satisfies
\begin{enumerate}
    \item[(a)] $\CW^{\CA, \mathrm{red}}_{-1} \simeq \oplus_{i=1}^m \CO_{F_i} \oplus \oplus_{i = 1}^l \CO_{G_i}$,
    \item[(b)] $\CW^\CA_{-1}/\CW^{\CA, \mathrm{red}}_{-1} \simeq \oplus_{i = 1}^l \CO_{G_i}$,
    \item[(c)] $\CW^\CA_0/ \CW^\CA_{-1} \simeq 0_{B*}\Omega^1_B(D + 2C)$, and
    \item[(d)] $\CA/\CW^\CA_0 \simeq 0_{B*}\Omega_B^{1\vee}$.
\end{enumerate}
An admissible sheaf $\CA$ is \emph{good} if the following hold.
\begin{enumerate}
\item[(i)] The extension class
\[
[\CW^\CA_{-1}] \in \mathrm{Ext}^1_{T^*B}\left(\bigoplus_{i = 1}^l \CO_{G_i}, \bigoplus_{i=1}^m \CO_{F_i} \oplus \bigoplus_{i = 1}^l \CO_{G_i}\right) = \bigoplus_{i = 1}^{l}\mathrm{Ext}^1_{T^*B}(\CO_{G_i}, \CO_{G_i})
\]
has a nonzero summand in each $\mathrm{Ext}^1_{T^*B}(\CO_{G_i}, \CO_{G_i}) \simeq \BC$. Up to scaling $\CO_{G_i}$ we may assume that each summand is $1$. In particular, the sheaf $\CW^\CA_{-1}$ is of the form
\[
\CW^\CA_{-1} \simeq \bigoplus_{i=1}^m \CO_{F_i} \oplus \bigoplus_{i = 1}^l \CO_{\mathbf{G}_i}
\]
where $\mathbf{G}_i \subset T^*B$ is a thickening of $G_i$.
\item[(ii)] The extension class
\begin{multline*}
[\CW^\CA_{0}] \in \mathrm{Ext}^1_{T^*B}(0_{B*}\Omega^1_B(D + 2C), \CW^\CA_{-1}) \\
= \bigoplus_{i = 1}^m \mathrm{Ext}^1_{T^*B}(0_{B*}\Omega^1_B(D + 2C), \CO_{F_i}) \oplus \bigoplus_{i = 1}^l\mathrm{Ext}^1_{T^*B}(0_{B*}\Omega^1_B(D + 2C), \CO_{\mathbf{G}_i})
\end{multline*}
has a nonzero summand in each $\mathrm{Ext}^1_{T^*B}(0_{B*}\Omega^1_B(D + 2C), \CO_{F_i}) \simeq \BC$ as well as in each~$\mathrm{Ext}^1_{T^*B}(0_{B*}\Omega^1_B(D + 2C), \CO_{\mathbf{G}_i}) \simeq \BC^2$. Up to an automorphism of $\CO_{F_i}$ and $\CO_{\mathbf{G}_i}$ we may assume that the summands are either $1$ or $(1, 0)$.
\end{enumerate}
Further, the short exact sequence
\[
0 \to \CW^\CA_{-1} \to \CW^\CA_{0} \to 0_{B*}\Omega_B^1(D + 2C) \to 0
\]
induces a natural morphism
\[
\rho_\CA : \mathrm{Ext}^1_{T^*B}(0_{B*}\Omega_B^{1\vee}, \CW^\CA_{0}) \to \mathrm{Ext}^1_{T^*B}(0_{B*}\Omega_B^{1\vee}, 0_{B*}\Omega_B^1(D + 2C)),
\]
sending the extension class $[\CA] \in \mathrm{Ext}^1_{T^*B}(0_{B*}\Omega_B^{1\vee}, \CW^\CA_{0})$ to $\rho_\CA([\CA])$.

A version of Proposition \ref{prop6.2} states that over a non-proper curve $B$, two good admissible sheaves $\CA, \CA' \in \mathrm{Coh}(T^*B)$ are isomorphic if the classes $\rho_\CA([\CA])$ and $\rho_{\CA'}([\CA'])$ coincide:
\[
\rho_\CA([\CA]) =  \rho_{\CA'}([\CA']) \in \mathrm{Ext}^1_{T^*B}(0_{B*}\Omega_B^{1\vee}, 0_{B*}\Omega^1_B(D + 2C)).
\]

On the Hodge module side we have $P_1 = j_{!*}\CV$ as in Section \ref{sec:6.1}. The fact that $\mathrm{gr}(j_{!*}\CV)$ is a good admissible sheaf on $T^*B$ is again proven by an explicit calculation using the local monodromy $(\begin{smallmatrix} 1&1\\-1&0 \end{smallmatrix})$ around each $q_i \in B$ corresponding to the cuspidal fiber $G_i$. Note that however, this time one should distinguish the two canonical extensions with respect to $[0, 1)$ and $(-1, 0]$ (which differ by a twist by $\CO_B(C)$) and use the latter for $\overline{\CV}$ in \eqref{eq:ic}. The end results, deduced from formulas in \cite[Theorem 2.6]{K} and \cite{Ma3}, are
\begin{gather*}
\mathrm{gr}_{-1}(j_{!*}\CV) \simeq \Omega_B^{1\vee}, \quad \mathrm{gr}_{0}(j_{!*}\CV) \simeq \Omega^1_B(D + 2C),\\
\mathrm{gr}_{k}(j_{!*}\CV) \simeq \bigoplus_{i=1}^m \BC_{p_i} \oplus \bigoplus_{i = 1}^l \CO_{\mathbf{q}_i}, \quad k>0,
\end{gather*}
where $\mathbf{q}_i$ is the length $2$ fat point supported on $q_i$. The nontrivial Higgs fields \eqref{Higgs_fields} are given by a cubic form
\begin{equation} \label{eq:cubiccusp}
[\overline{\nabla}] \in H^0(B, (\Omega^1_B)^{\otimes 3}(D + 2C))
\end{equation}
for $k = -1$, the residue map for $k = 0$, and the identity maps for all $k > 0$. As in the nodal case we set $\CW_i^{\mathrm{HM}} \in \mathrm{Coh}(T^*B)$ to be
\[
\bigoplus_{k \geq -i}\mathrm{gr}_k(j_{!*}\CV)
\]
endowed with the restricted Higgs field, and $\CW_{-1}^{\mathrm{HM}, \mathrm{red}}$ is the obvious subsheaf of $\CW_{-1}^{\mathrm{HM}}$. It is straightforward to check that $\mathrm{gr}(j_{!*}\CV)$ is a good admissible sheaf on $T^*B$.

The Fourier--Mukai side is treated with the help of a log resolution. Let $f: \widetilde{M} \to M$ be the resolution obtained by blowing up each cusp $y_i \in G_i$ three times. We write $\pi_{\widetilde{M}}: \widetilde{M} \to B$ for the composition $\pi_M \circ f$. Let $E_{1, i}, E_{2, i}, E_{3, i} \subset \widetilde{M}$ be the three (strict transforms of) exceptional divisors associated with $y_i$. For $k = 1, 2, 3$, we set $E_k := \sum_{i = 1}^lE_{k, i} \subset \widetilde{M}$. We also set
\[
\mathbf{E} = E_1 + 2E_2 + 5E_3 \subset \widetilde{M}.
\]

We consider the following triangle of morphisms
\begin{equation} \label{eq:fancytriangle}
\begin{tikzcd}[column sep=small]
\Omega_M^1 \arrow[dr, ""] \arrow[rr, ""] & & Rf_*\left(\Omega^1_{\widetilde{M}}(\log f^{-1}(F + G))\otimes \CO_{\widetilde{M}}(\mathbf{E})\right) \arrow[dl, ""] \\
& Rf_*\left(\Omega^1_{\widetilde{M}/B}(\log f^{-1}(F + G))\otimes \CO_{\widetilde{M}}(\mathbf{E})\right). & 
\end{tikzcd}
\end{equation}
We define
\begin{gather*}
\CK_{-1}^{\mathrm{red}} := \mathrm{cone}\left(\Omega_M^1 \to Rf_*\left(\Omega^1_{\widetilde{M}}(\log f^{-1}(F + G))\otimes \CO_{\widetilde{M}}(\mathbf{E})\right)\right)[-1], \\
\CK_0 := \mathrm{cone}\left(\Omega_M^1 \to Rf_*\left(\Omega^1_{\widetilde{M}/B}(\log f^{-1}(F + G))\otimes \CO_{\widetilde{M}}(\mathbf{E})\right)\right)[-1].
\end{gather*}
By the octahedral axiom, the cones associated with the three maps of \eqref{eq:fancytriangle} form an exact triangle
\begin{equation} \label{eq:k-1k0}
\CK_{-1}^{\mathrm{red}} \to \CK_0 \to \pi_M^*\Omega^1_B(D + C) \otimes Rf_*\CO_{\widetilde{M}}(\mathbf{E}) \xrightarrow{+1}.
\end{equation}
Moreover, the natural inclusion $\mathbf{E} \subset \pi_{\widetilde{M}}^{-1}(C)$ induces a morphism
\begin{equation} \label{eq:inclusion}
\pi_M^*\Omega^1_B(D + C) \otimes Rf_*\CO_{\widetilde{M}}(\mathbf{E}) \to \pi_M^*\Omega^1_B(D + 2C).
\end{equation}
We define
\[
\CK_{-1} := \mathrm{cone}\left(\CK_0 \to \pi_M^*\Omega^1_B(D + 2C)\right)[-1]
\]
where the arrow is obtained by composing \eqref{eq:k-1k0} and \eqref{eq:inclusion}. Finally we set
\[
\CW^{\mathrm{FM}, \mathrm{red}}_{-1} := \phi_{\mathrm{FM}}(\CK_{-1}^{\mathrm{red}}), \quad \CW^{\mathrm{FM}}_{-1} := \phi_{\mathrm{FM}}(\CK_{-1}), \quad \CW^{\mathrm{FM}}_{0} := \phi_{\mathrm{FM}}(\CK_{0}).
\]

It remains to check that all three terms above are sheaves concentrated in degree $0$ which are part of a filtration
\[
\CW^{\mathrm{FM}, \mathrm{red}}_{-1} \subset \CW^{\mathrm{FM}}_{-1} \subset \CW^{\mathrm{FM}}_{0} \subset \phi_{\mathrm{FM}}(\Omega_M^1),
\]
and that $\phi_{\mathrm{FM}}(\Omega_M^1)|_{\hat{B}}$ is indeed a good admissible sheaf on $\hat{B}$. For example, by \cite[2.10]{K} we have
\[
\Omega^1_{\widetilde{M}/B}(\log f^{-1}(F + G))\otimes \CO_{\widetilde{M}}(\mathbf{E}) \simeq \omega_{\widetilde{M}/B}
\]
so that under the symplectic form $\sigma$ of $M$ there is an isomorphism
\[
Rf_*\left(\Omega^1_{\widetilde{M}/B}(\log f^{-1}(F + G))\otimes \CO_{\widetilde{M}}(\mathbf{E})\right) \simeq \pi_B^*\Omega_B^{1\vee}.
\]
Comparing with the definition of $\CK_0$, we find an exact triangle
\[
\CK_0 \to \Omega^1_M \to \pi_B^*\Omega_B^{1\vee} \xrightarrow{+1}
\]
whose Fourier--Mukai image is the expected short exact sequence
\[
0 \to \CW^{\mathrm{FM}}_{0} \to \phi_{\mathrm{FM}}(\Omega_M^1) \to 0_{B*}\Omega_B^{1\vee} \to 0.
\]
One also uses the exact triangle
\begin{multline*}
\pi_M^*\Omega^1_B(D + C) \otimes Rf_*\CO_{\widetilde{M}}(\mathbf{E}) \to Rf_*\left(\Omega^1_{\widetilde{M}}(\log f^{-1}(F + G))\otimes \CO_{\widetilde{M}}(\mathbf{E})\right) \\
\to Rf_*\left(\Omega^1_{\widetilde{M}/B}(\log f^{-1}(F + G))\otimes \CO_{\widetilde{M}}(\mathbf{E})\right) \xrightarrow{+1}
\end{multline*}
and the log Katz--Oda theorem \cite{Katz} to relate the extension class $\rho_{\mathrm{FM}}([\phi_{\mathrm{FM}}(\Omega_M^1)|_{\hat{B}}])$ to the same cubic form $[\overline{\nabla}]$ as in \eqref{eq:cubiccusp}, and to conclude that
\[
\rho_{\mathrm{HM}}([ \mathrm{gr}(j_{!*}\CV)|_{\hat{B}} ]) = \rho_{\mathrm{FM}}([\phi_{\mathrm{FM}}(\Omega_M^1)|_{\hat{B}}])
\]
as in the nodal case.

\end{document}